\documentclass[11pt,a4paper]{article}
\usepackage[left=2.6cm, top=2.6cm,bottom=2.6cm,right=2.6cm]{geometry}
\usepackage{amsmath,amssymb,amsthm,mathrsfs,calc,graphicx,stmaryrd,stmaryrd}
\usepackage[british]{babel}



\newtheorem {theorem}{Theorem}[section]
\newtheorem {proposition}[theorem]{Proposition}
\newtheorem {lemma}[theorem]{Lemma}

{\theoremstyle{definition}

}
{\theoremstyle{theorem}
\newtheorem {remark}[theorem]{Remark}

}

\newcommand{\var}{\operatorname{var}}

\def\ba{\begin{array}}
\def\ea{\end{array}}
\def\bea{\begin{eqnarray} \label}
\def\eea{\end{eqnarray}}
\def\be{\begin{equation} \label}
\def\ee{\end{equation}}
\def\bit{\begin{itemize}}
\def\eit{\end{itemize}}
\def\ben{\begin{enumerate}}
\def\een{\end{enumerate}}

\def\lan{\langle}
\def\ran{\rangle}

\def\BB{\mathbb{B}}

\def\EE{\mathbb{E}}

\def\NN{\mathbb{N}}
\def\PP{\mathbb{P}}

\def\RR{\mathbb{R}}

\def\BBd{\mathbb{B}^d}
\def\SSd{\mathbb{S}^{d-1}}

\def\a{\alpha}

\def\d{\delta}

\def\k{\kappa}
\def\l{\lambda}

\def\s{\sigma}

\def\Sig{\Sigma}


\def\bx{\mathbf{x}}


\def\cB{\mathcal{B}}
\def\cC{\mathcal{C}}

\def\cF{\mathcal{F}}

\def\cH{\mathcal{H}}
\def\cI{\mathcal{I}}

\def\cM{\mathcal{M}}
\def\cN{\mathcal{N}}

\def\cR{\mathcal{R}}

\def\cT{\mathcal{T}}
\def\cX{\mathcal{X}}

\def\sT{\mathscr{T}}

\def\dint{\textup{d}}

\def\var{{\textup{var}}}

\def\s{\otimes}

\def\cone{{\rm cone}}
\def\ext{{\rm ext}}
\def\setk{\llbracket k\rrbracket}
\def\setp{\llbracket p\rrbracket}


\parindent 0pt


\begin{document}

\title{\bfseries Concentration and moderate deviations for\\ Poisson polytopes and polyhedra}

\author{Julian Grote\footnotemark[1]\ \ and Christoph Th\"ale\footnotemark[2]}

\date{}
\renewcommand{\thefootnote}{\fnsymbol{footnote}}
\footnotetext[1]{Ruhr University Bochum, Faculty of Mathematics, D-44780 Bochum, Germany. E-mail: julian.grote@rub.de}

\footnotetext[2]{Ruhr University Bochum, Faculty of Mathematics, D-44780 Bochum, Germany. E-mail: christoph.thaele@rub.de}

\maketitle

\begin{abstract}
The convex hull generated by the restriction to the unit ball of a stationary Poisson point process in the $d$-dimensional Euclidean space is considered. By establishing sharp bounds on cumulants, exponential estimates for large deviation probabilities are derived and the relative error in the central limit theorem on a logarithmic scale is investigated for a large class of key geometric characteristics. This includes the number of lower-dimensional faces and the intrinsic volumes of the random polytopes. Furthermore, moderate deviation principles for the spatial empirical measures induced by these functionals are also established using the method of cumulants. The results are applied to deduce, by duality, fine probabilistic estimates and moderate deviation principles for combinatorial parameters of a class of zero cells associated with Poisson hyperplane mosaics. As a special case this comprises the typical Poisson-Voronoi cell conditioned on having large inradius.
\bigskip
\\
{\bf Keywords}. {Convex hulls, cumulants, concentration inequalities, deviation probabilities, moderate deviation principles, Poisson hyperplanes, Poisson-Voronoi mosaics, random polytopes, zero cells.}\\
{\bf MSC}. Primary 60F10, 60D05; Secondary 52A22.
\end{abstract}

\section{Introduction}

Random polytopes are among the most classical and popular models considered in geometric probability, and their study has become a rapidly developing branch of mathematics at the borderline between geometry and probability. One reason for the increasing interest are the numerous connections and applications of random polytopes in algorithmic geometry, convex geometric analysis, optimization, random matrix theory, set estimation or multivariate statistics; we direct the reader to the surveys of B\'ar\'any \cite{BaranySurvey}, Hug \cite{HugSurvey} and Reitzner \cite{ReitznerSurvey} for further information and references.

A common method to construct a random polytope is to take the convex hull of a finite family of random points that are uniformly distributed in the interior of a prescribed convex body $K\subset\RR^d$ with $d\geq 2$. In their seminal paper, R\'enyi and Sulanke \cite{RenyiSulanke} considered the asymptotic behaviour of the mean vertex number and the mean volume (area) of such random polytopes if $d=2$, as the number of points tends to infinity. Since then, first-order asymptotic properties of geometric characteristics of random polytopes have been investigated for general space dimensions by B\'ar\'any \cite{Barany89,Barany92}, B\"or\"oczky, Hoffmann and Hug \cite{BoeroHoffmannHug}, Reitzner \cite{ReitznerStochApproxSmooth04,ReitznerCombStructure05} or Sch\"utt \cite{Schuett}, to name only a few. More recently, the focus has turned towards asymptotic second-order characteristics like the variance of the number of vertices or the variance of the volume. The classical Efron-Stein inequality has been used by Reitzner \cite{ReitznerEfromStein03} to obtain upper bounds for these variances as well as laws of large numbers. Matching lower bounds together with related central limit theorems have been shown on different levels of generality by B\'ar\'any and Reitzner \cite{BaranyReitznerVariance,BaranyReitznerCLT}, Cabo and Groeneboom \cite{CaboGroeneboom}, Groeneboom \cite{Groeneboom}, Hueter \cite{Hueter}, Pardon \cite{Pardon}, Reitzner \cite{ReitznerClt05}, Schreiber \cite{Schreiber2002} and Vu \cite{VuCLT}. This line of research has been continued in a series of remarkable papers by Calka, Schreiber and Yukich \cite{CalkaSchreiberYukich}, Calka and Yukich \cite{CalkaYukich1}, and Schreiber and Yukich \cite{SchreiberYukich}.

In contrast to the typical or `normal' behaviour of random polytopes, much less is known about their atypical or exceptional behaviour, or at scales in between. For random polytopes in the unit ball, Calka and Schreiber \cite{CalkaSchreiber} have obtained information on large deviations for the vertex number and Schreiber \cite{Schreiber2003} has computed certain moderate deviation probabilities for the mean width. Moreover, the paper of Vu \cite{VuConcentration} deals in a general context with concentration inequalities for the volume and the vertex number. Besides such large deviation or concentration inequalities, it is from a probabilistic point of view also natural to ask for the behaviour of geometric characteristics associated with random polytopes on intermediate scales `between' that of the above-mentioned law of large numbers and that of a central limit theorem. The present paper is an attempt to fill this gap and to prove a set of concentration inequalities in the case where the underlying convex body $K$ is the $d$-dimensional unit ball $\BBd$ and where the family of random points is induced by a Poisson point process (Poisson polytopes). We refer to the papers of Affentranger \cite{Affentranger}, Buchta and M\"uller \cite{BuchtaMueller}, Calka and Schreiber \cite{CalkaSchreiber}, Hsing \cite{Hsing}, K\"ufer \cite{Kuefer}, M\"uller \cite{Mueller} and Schreiber \cite{Schreiber2002,Schreiber2003} for distinguished results about random polytopes in $\BBd$.

\medskip

Consider a stationary Poisson point process in $\RR^d$ with intensity $\l>0$, let $\eta_\l$ be its restriction to $\BBd$ and let $\Pi_\lambda$ be the convex hull of the points of $\eta_\lambda$. For simplicity and to facilitate access to our results, we restrict for the rest of this introduction to the vertex number $f_0(\Pi_\lambda)$ of the random polytopes $\Pi_\lambda$ and refer to Section \ref{sec:MainResults} for theorems dealing with other geometric characteristics of $\Pi_\l$ as well. Our first theorem is a concentration inequality for the vertex number of the random polytopes $\Pi_\l$.

\begin{theorem}\label{thm:LDI}
Let $y\geq 0$. Then, for $\l\geq c_1$,
$$
\PP\big(|f_0(\Pi_\lambda)-\EE f_0(\Pi_\l)|\geq y\sqrt{\var f_0(\Pi_\l)}\,\big)\leq 2\exp\Big(-{1\over 4}\min\Big\{{y^2\over 2^{d+4}},c_2\lambda^{d-1\over 2(d+1)(d+4)}y^{1\over d+4}\Big\}\Big)
$$
with constants $c_1,c_2\in(0,\infty)$ only depending on $d$.
\end{theorem}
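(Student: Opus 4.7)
The strategy is the method of cumulants announced in the abstract, carried out in two steps. Step~1 (the hard one) establishes sharp cumulant bounds of the form
\[
|\kappa_k(f_0(\Pi_\lambda))|\ \leq\ (k!)^{d+4}\, H^{k-2}\,\var f_0(\Pi_\lambda)
\qquad(k\geq 2,\ \lambda\geq\lambda_0),
\]
for constants $H$ and $\lambda_0$ depending only on $d$. Step~2 feeds these bounds into the abstract Saulis--Statulevicius inequality (in the form popularised by Döring--Eichelsbacher): if a random variable $Z$ with $\EE Z=0$ and $\var Z=1$ satisfies $|\kappa_k(Z)|\leq (k!)^{1+\gamma}/\Delta^{k-2}$ for every $k\geq 2$, then
\[
\PP(|Z|\geq y)\ \leq\ 2\exp\Big(-\tfrac14\min\{y^2/2^{1+\gamma},\ (y\Delta)^{1/(1+\gamma)}\}\Big).
\]
Applied to $Z_\lambda=(f_0(\Pi_\lambda)-\EE f_0(\Pi_\lambda))/\sigma_\lambda$ with $\sigma_\lambda=\sqrt{\var f_0(\Pi_\lambda)}$, the Step~1 bound yields this inequality with $1+\gamma=d+4$ and $\Delta=\sigma_\lambda/H$. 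The constant $2^{d+4}$ in front of $y^2$ then appears directly from $2^{1+\gamma}$, while inserting the (lower) variance asymptotics $\sigma_\lambda\gtrsim\lambda^{(d-1)/(2(d+1))}$ of B\'ar\'any--Reitzner into $(y\Delta)^{1/(d+4)}$ produces the $\lambda^{(d-1)/(2(d+1)(d+4))}y^{1/(d+4)}$ term. The constants $c_1,c_2$ in the theorem collect $H$, $\lambda_0$ and the variance constant.

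To carry out Step~1 I would write $f_0(\Pi_\lambda)=\sum_{x\in\eta_\lambda}\mathbf{1}\{x\in\Vertices(\Pi_\lambda)\}$ and expand $\kappa_k$ via the combinatorial (connected-partition) formula for cumulants of Poisson integrals as a signed sum, indexed by connected set partitions of $\{1,\dots,k\}$, of integrals of products of factorial correlation functions of the vertex indicator. Two geometric facts specific to convex hulls in $\BBd$ tame these integrals: (i)~vertices of $\Pi_\lambda$ concentrate near $\bd\BBd$ on a boundary layer of width $O(\lambda^{-2/(d+1)})$, the correct parabolic scaling; and (ii)~the vertex indicator is \emph{exponentially stabilising} on that scale, as shown by Calka--Schreiber--Yukich. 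Exponential stabilisation localises each connected $k$-cluster to a random ball of radius $O(\lambda^{-2/(d+1)})$ with exponential tails; integrating edge-by-edge along a spanning tree of the cluster produces a combinatorial factor $k^{k-2}\leq(k!)^{d+4}$, a product of $k-2$ localised link integrals contributing $H^{k-2}$, and a root factor matched against $\var f_0(\Pi_\lambda)$.

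The central obstacle is Step~1: producing cumulants with polynomial-in-$k!$ growth rather than the useless $(Ck)^{Ck}$ that a brute-force bound on the connected-partition sum delivers. One must simultaneously exploit the combinatorial cancellations inside the connected-partition formula and the exponential decay from stabilisation; the general machinery for converting exponential stabilisation into $(k!)^{1+\gamma}$ cumulant estimates has been developed by Schulte--Yukich and Eichelsbacher--Rai\v{c}--Schreiber, and the technical core of the argument is to verify uniformly in $\lambda$ that the vertex indicator of $\Pi_\lambda$ fits the hypotheses of such a scheme (in particular, the required stabilisation moments and a minimal non-degeneracy of the correlation functions). Once this is secured, Step~2 is a black-box application of the abstract concentration inequality.
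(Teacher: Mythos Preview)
Your proposal is correct and follows essentially the same route as the paper: a cumulant bound of the shape $|c^k|\le (k!)^{d+4}\Delta_\lambda^{-(k-2)}$ (Proposition~\ref{prop:CumulantEstimate} with $w[\xi_{f_0}]=0$), combined with the Saulis--Statulevi\v{c}ius inequality (Lemma~\ref{lem:Litauer}(i)) and the variance lower bound \eqref{eq:VarLowerBound}, exactly as you describe.

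Two small corrections to your bookkeeping. First, the factor $(k!)^{d+4}$ is not just $k^{k-2}$ from Cayley's theorem; in the paper's accounting the off-diagonal contribution carries $(k!)^{w[\xi]+2}\,k^{2k}\,(dk)!$ (Lemma~\ref{lem8:OffDiagEst}), and only after the elementary estimates $k^{2k}\le e^{6k}(k!)^2$ and $(dk)!\le (d^de^{4d})^k(k!)^d$ does one arrive at $(k!)^{d+4}$. Second, and more substantively, the Eichelsbacher--Rai\v{c}--Schreiber machinery does \emph{not} apply off the shelf here (the paper says so explicitly): the new feature is that the cluster estimate (Lemma~\ref{lem2:ExpClustering}) must incorporate exponential decay not only in the spatial separation $\delta$ but also in the \emph{height} coordinates $h_1,\dots,h_k$ under the parabolic rescaling $\mathscr{T}_\lambda$, reflecting that only points near $\partial\mathbb{B}^d$ contribute. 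Handling the resulting iterated height integrals (Lemmas~\ref{lem5:T1}--\ref{lem7:IntEstimate}) is where the extra powers of $k!$ originate and is the genuine technical extension beyond the stabilisation framework you cite.
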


Theorem \ref{thm:LDI} should be compared with Theorem 2.11 in \cite{VuConcentration}. Provided that $\l$ is sufficiently large it says in our situation that
\begin{equation}\label{eq:VuF0}
\PP\big(|f_0(\Pi_\lambda)-\EE f_0(\Pi_\l)|\geq y\sqrt{\var f_0(\Pi_\l)}\,\big)\leq 2\exp\big(-b_1y^2\big)+p_{NT}\,,
\end{equation}
for all $0<y<b_2\l^{-{(d+3)^2\over(d+1)(3d+5)}}$ with constants $b_1,b_2\in(0,\infty)$ only depending on $d$, see also \cite{ReitznerSurvey} for a related version. Here, $p_{NT}$ is the probability of what is called a `non-typical event' in \cite{VuConcentration} and satisfies the estimate $
p_{NT}\leq\exp(-b_3\l^{d-1\over 3d+5})$, independently of $y$, with another constant $b_3\in(0,\infty)$ depending only on $d$. Our theorem basically recovers the exponential term in Vu's inequality. However, while Vu's inequality involves the boundary term $p_{NT}$, which does not depend on $y$, such a term is not present in Theorem \ref{thm:LDI}. Furthermore, our inequality yields an exponential estimate for all $y\geq 0$ and not only for values of $y$ close to zero. We also emphasize that the inequality in Theorem \ref{thm:LDI} remains valid for a wide class of geometric functionals, while in \cite{VuConcentration} besides of $f_0(\Pi_\l)$ only the volume of $\Pi_\l$ is treated. On the other hand, Theorem \ref{thm:LDI} deals with the case of a random polytope in the unit ball, whereas in \cite{VuConcentration} arbitrary underlying convex bodies are permitted.

\medskip

Our next result is an estimate for certain deviation probabilities on a logarithmic scale that characterize the relative error in the central limit theorem for the normalized vertex number. To state it, denote by $\Phi(\,\cdot\,)$ the distribution function of a standard Gaussian random variable.

\begin{theorem}\label{thm:MDProbabs}
For $0\leq y\leq c_5\l^{d-1\over 2(d+1)(2d+7)}$ and $\l\geq c_3$ one has that
\begin{align*}
\Bigg|\log{\PP\big(f_0(\Pi_\l)-\EE f_0(\Pi_\l)\geq y\sqrt{\var f_0(\Pi_\l)}\,\big)\over 1-\Phi({y})}\Bigg| &\leq c_4\,(1+y^3)\,\l^{-{d-1\over 2(d+1)(2d+7)}}\quad\text{and}\\
\Bigg|\log{\PP\big(f_0(\Pi_\l)-\EE f_0(\Pi_\l)\leq -y\sqrt{\var f_0(\Pi_\l)}\,\big)\over \Phi(-{y})}\Bigg| &\leq c_4\,(1+y^3)\,\l^{-{d-1\over 2(d+1)(2d+7)}}
\end{align*}
with constants $c_3,c_4,c_5\in(0,\infty)$ only depending on $d$.
\end{theorem}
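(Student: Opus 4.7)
The plan is to derive Theorem \ref{thm:MDProbabs} from sharp cumulant estimates for $f_0(\Pi_\lambda)$ combined with a Saulis--Statulevicius moderate deviation lemma. First, I would establish a bound of the form
\[
|\cum_k(f_0(\Pi_\lambda))| \leq C^k (k!)^{1+\gamma}\,\lambda^{(d-1)/(d+1)}, \qquad k \geq 2,
\]
for some $\gamma>0$ depending only on $d$, whose prefactor matches the known variance asymptotic $\var f_0(\Pi_\lambda) \asymp \lambda^{(d-1)/(d+1)}$. Dividing through by $\sqrt{\var f_0(\Pi_\lambda)}^{\,k}$ puts this into the canonical Saulis--Statulevicius form $|\cum_k(Y_\lambda)| \leq (k!)^{1+\gamma}/\Delta_\lambda^{k-2}$ for the standardized variable
$Y_\lambda := (f_0(\Pi_\lambda) - \EE f_0(\Pi_\lambda))/\sqrt{\var f_0(\Pi_\lambda)}$, with $\Delta_\lambda$ a fractional power of $\lambda$.

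Second, plugging these normalized cumulant bounds into the Saulis--Statulevicius moderate deviation lemma yields, uniformly for $0 \leq y \leq c\,\Delta_\lambda^{1/(2\gamma+1)}$,
\[
\left|\log\frac{\PP(Y_\lambda \geq y)}{1-\Phi(y)}\right| \leq C(1+y^3)\,\Delta_\lambda^{-1/(2\gamma+1)},
\]
together with the symmetric statement for the lower tail obtained by applying the same lemma to $-Y_\lambda$, whose cumulants differ from those of $Y_\lambda$ only by a sign. Matching the exponent $(d-1)/(2(d+1)(2d+7))$ appearing in the conclusion then amounts to the bookkeeping identification $\gamma = d+3$ and $\Delta_\lambda = \lambda^{(d-1)/(2(d+1))}$, which is automatic once the cumulant estimate is in place.

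The main obstacle is the cumulant estimate itself. For the vertex functional of a Poisson polytope the standard route is to expand $\cum_k(f_0(\Pi_\lambda))$ via the multivariate Mecke formula as a sum, indexed by connected partitions of $\{1,\dots,k\}$, of iterated integrals of mixed factorial moment densities of the vertex score, and then to estimate each integrand using the exponential stabilization of the score near $\partial\BBd$ in the spirit of Schreiber--Yukich and Calka--Schreiber--Yukich, whose radius of geometric influence at height $h$ above the boundary has exponential tails on the scale $h^{-1/2}$. The combinatorics of connected partitions produces the factorial factor $(k!)^{1+\gamma}$, whereas integration over the boundary layer of thickness of order $\lambda^{-2/(d+1)}$ produces the variance-scale prefactor $\lambda^{(d-1)/(d+1)}$. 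Keeping the implied constants uniform in $k$, so that the factor $C^k$ is not spoiled, is the technical heart of the argument and is what ultimately determines the value of $\gamma$.
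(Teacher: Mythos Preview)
Your proposal is correct and follows essentially the same route as the paper: establish the cumulant bound $|c^k(f_0(\Pi_\lambda))|\leq C^k(k!)^{1+\gamma}\lambda^{(d-1)/(d+1)}$ via cluster (connected-partition) expansions and the exponential stabilization estimates of Schreiber--Yukich and Calka--Schreiber--Yukich, normalize by the variance lower bound to obtain the Saulis--Statulevi\v{c}ius condition with $\gamma=d+3$ and $\Delta_\lambda\asymp\lambda^{(d-1)/(2(d+1))}$, and then read off the conclusion from the standard moderate deviation lemma. The paper packages the cumulant estimate as Proposition~\ref{prop:CumulantEstimate} (with $w[\xi_{f_0}]=0$, giving exponent $d+4$ on $k!$) and the black box as Lemma~\ref{lem:Litauer}(ii), exactly matching your bookkeeping $1/(1+2\gamma)=1/(2d+7)$.
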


Our next theorem makes a statement about moderate deviations of the rescaled vertex number of $\Pi_\l$, which can be regarded as a kind of refinement of a central limit theorem, compare with Remark \ref{rem:OtherResults}. We will see in Theorem \ref{thm:MDPScalarGeneral} below that the set $B$ appearing in Theorem \ref{thm:MDPScalar} can be replaced in a way by an arbitrary measurable subset $B\subset\RR$ and that the rescaled vertex number of the random polytope $\Pi_\l$ satisfies a so-called moderate deviation principle. 

\begin{theorem}\label{thm:MDPScalar}
Let $(a_\l)_{\l>0}$ be such that
$$
\lim_{\l\to\infty}a_\l=\infty\qquad\text{and}\qquad\lim_{\l\to\infty}a_\l\l^{-{d-1\over 2(d+1)(2d+7)}}=0\,.
$$
Then, for all $y\in\RR$, one has that
\begin{equation*}
\lim_{\l\to\infty}{1\over a_\l^{2}}\log\PP\Big({1\over a_\l}{f_0(\Pi_\l)-\EE f_0(\Pi_\l)\over\sqrt{\var f_0(\Pi_\l)}}\in B\Big) = -{y^2\over 2}\qquad\text{with}\qquad B=[y,\infty)\,.
\end{equation*}
\end{theorem}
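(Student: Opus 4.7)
The plan is to deduce this moderate deviation limit directly from the logarithmic Cram\'er-type estimate of Theorem \ref{thm:MDProbabs}, specialised at the moderate scale $a_\l y$. Set $\alpha := (d-1)/(2(d+1)(2d+7))$. The hypothesis $a_\l\l^{-\alpha}\to 0$ guarantees that, for every fixed $y\geq 0$ and all sufficiently large $\l$, one has $a_\l y\leq c_5\l^{\alpha}$, so that Theorem \ref{thm:MDProbabs} is applicable with its argument $y$ replaced by $a_\l y$. This substitution yields
\[
\log\PP\bigl(f_0(\Pi_\l)-\EE f_0(\Pi_\l)\geq a_\l y\,\sqrt{\var f_0(\Pi_\l)}\bigr)
\;=\;\log\bigl(1-\Phi(a_\l y)\bigr)+R_\l(y),
\]
with $|R_\l(y)|\leq c_4(1+a_\l^3 y^3)\,\l^{-\alpha}$.

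Next I would divide by $a_\l^2$ and treat the two pieces separately. For the remainder,
\[
\frac{|R_\l(y)|}{a_\l^2}\;\leq\;c_4\,a_\l^{-2}\l^{-\alpha}+c_4 y^3\cdot a_\l\,\l^{-\alpha}\;\longrightarrow\;0,
\]
where both summands vanish thanks to the two assumptions on $(a_\l)$. For the Gaussian main term I would invoke Mill's ratio in the form $\log(1-\Phi(t))=-t^2/2-\log t-\tfrac12\log(2\pi)+O(t^{-2})$ as $t\to\infty$; applied with $t=a_\l y$ for $y>0$, the contributions $a_\l^{-2}\log(a_\l y)$ and $a_\l^{-2}$ tend to zero, whence $a_\l^{-2}\log(1-\Phi(a_\l y))\to -y^2/2$. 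The degenerate value $y=0$ is immediate, since the probability is bounded below (in fact converges to $1/2$ by the central limit theorem contained in Theorem \ref{thm:MDProbabs}) while $a_\l^2\to\infty$.

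The principal obstacle — and the place where the hypothesis on $(a_\l)$ is used in a sharp way — is the cubic correction $(1+y^3)$ in Theorem \ref{thm:MDProbabs}: under the substitution $y\mapsto a_\l y$ it becomes $1+a_\l^3 y^3$, and dividing by $a_\l^2$ leaves a residual factor of order $a_\l\l^{-\alpha}$. It is exactly this term that forces the growth restriction $a_\l\l^{-\alpha}\to 0$, so that the admissible window of the moderate deviation principle coincides with the validity range of the logarithmic deviation estimate. (The analogous lower-deviation counterpart would follow from the second inequality in Theorem \ref{thm:MDProbabs} by the same argument, but is not required for the one-sided set $B=[y,\infty)$ with $y\geq 0$ considered here.)
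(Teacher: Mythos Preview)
Your argument is correct for $y\geq 0$, which is in fact the only range where the stated identity can hold (for $y<0$ the set $B=[y,\infty)$ contains $0$, the probability tends to $1$, and the limit is $0$ rather than $-y^2/2$; this is an imprecision in the theorem's wording, not in your reasoning).

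Your route, however, differs from the paper's. The paper does not deduce Theorem~\ref{thm:MDPScalar} from Theorem~\ref{thm:MDProbabs}; instead both are derived in parallel from a single cumulant estimate (Proposition~\ref{prop:CumulantEstimate}), which shows that the normalised variables $X_\l=(f_0(\Pi_\l)-\EE f_0(\Pi_\l))/\sqrt{\var f_0(\Pi_\l)}$ satisfy $|c^k(X_\l)|\le (k!)^{1+\gamma}\Delta_\l^{-(k-2)}$ with $\gamma=d+3$ and $\Delta_\l$ of order $\l^{(d-1)/(2(d+1))}$. The MDP is then read off from a general result of D\"oring and Eichelsbacher (recorded as Lemma~\ref{lem:Litauer}(iii)) for families obeying such cumulant bounds; this yields the full moderate deviation principle for arbitrary Borel sets $B\subset\RR$ (Theorem~\ref{thm:MDPScalarGeneral}), of which the half-line case is the instance recorded in Theorem~\ref{thm:MDPScalar}.

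Your argument is more elementary in that it avoids invoking the D\"oring--Eichelsbacher lemma and obtains the half-line case directly from the already-established Cram\'er-type bound via the Mill's-ratio asymptotics for $1-\Phi$. The price is generality: your method does not immediately deliver the MDP for arbitrary measurable $B$. Both approaches ultimately rest on the same cumulant estimate; you have simply factored the implication through Theorem~\ref{thm:MDProbabs} rather than through a packaged MDP lemma. Your observation that the cubic correction in Theorem~\ref{thm:MDProbabs} is precisely what forces the growth restriction $a_\l\l^{-\alpha}\to 0$ is correct and worth keeping.
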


As anticipated above, we will see in Section \ref{sec:MainResults} that Theorem \ref{thm:LDI}, Theorem \ref{thm:MDProbabs} and Theorem \ref{thm:MDPScalar} continue to hold for a large class of key geometric functionals of the random polytopes $\Pi_\l$ (possibly under different rescalings). In particular, this includes
\begin{itemize}
\item[-] the number of $j$-dimensional faces of $\Pi_\l$ for all $j\in\{0,1,\ldots,d-1\}$,
\item[-] the missed volume of $\Pi_\l$ in $\BBd$,
\item[-] the missed volume of the Voronoi-flower of $\Pi_\l$,
\item[-] the mean width of $\Pi_\l$ and, more generally, 
\item[-] the $j$-th intrinsic volume of $\Pi_\l$ for all $j\in\{1,\ldots,d-1\}$.
\end{itemize}
In addition, we will work on the level of empirical measures and thus take care also of the spatial profile of the involved functionals. This in turn puts us in a position to present our announced moderate deviation principle also on the level of measures. Let us emphasize at this point that Theorem \ref{thm:MDProbabs} and its generalization in Theorem  \ref{thm:MDProbasGeneral} as well as the moderate deviation principles in Theorem \ref{thm:MDPScalarGeneral} and Theorem \ref{thm:MDPMeasureGeneral} seem to be the first results in this direction in the context of random polytopes and that we were not able to locate counterparts in the existing literature.

\medskip

Instead of taking the convex hull of random points, it is also natural to consider random sets that arise as intersections of random half-spaces, see the surveys of Hug \cite{HugSurvey} and Reitzner \cite{ReitznerSurvey}. To understand the geometric and the combinatorial structure of such random polyhedra is of importance, for example, in linear optimization. In particular, the performance of the well-known simplex algorithm depends on the number of edges of the polyhedron that is defined as intersection of the set of half-space determined by a system of linear inequalities. One way to obtain a deeper insight into the generic combinatorial complexity that arises in such situations is to consider random polyhedral sets as argued in Borgwardt's monograph \cite{Borgwardt}. By a duality argument borrowed from the works of Calka and Schreiber \cite{CalkaSurvey,CalkaSchreiberPV} we transfer our results for random polytopes to combinatorial parameters of a certain class of random polyhedra that are associated with Poisson hyperplanes (Poisson polyhedra). In particular, this includes the prominent typical cell of a stationary Poisson-Voronoi tessellation of $\RR^d$ conditioned on having a large inradius. In this context, we also contribute to the results around D.G.\ Kendall's conjecture asking for the asymptotic geometry of `large' tessellation cells and for which we refer in particular to the paper of Hug and Schneider \cite{HugSchneider} as well as to the references cited therein.

\medskip

Let us briefly comment on the technique we use to derive Theorems \ref{thm:LDI}--\ref{thm:MDPScalar} and their generali\-zations stated in Section \ref{sec:MainResults}. It is based on precise estimates of the cumulants of the involved random variables. The methodology to deduce fine probabilistic estimates from bounds on cumulants goes back to the `Lithuanian school of probability' and is presented in the monograph of Saulis and Statulevi\v{c}ius \cite{SaulisBuch}. In the context of geometric probability this has been used by Eichelsbacher, Rai\v{c} and Schreiber \cite{EichelsbacherSchreiberRaic} to deduce results similar to those presented above for a class of so-called stabilizing functionals. However, the random polytope functionals we consider behave quite differently and are not within the reach of the results in \cite{EichelsbacherSchreiberRaic}. Instead, we use the principal idea from \cite{CalkaSchreiberYukich,SchreiberYukich} that connects $\Pi_\l$ with a parabolic growth process in the upper half-space. The key advantage of this connection lies in the fact that in the rescaled parabolic picture spatial correlations are much easier to localize and to describe. We then develop the methods from \cite{BaryshnikovYukich} and \cite{EichelsbacherSchreiberRaic} further to make the cumulant approach available in the context of the random polytopes $\Pi_\l$. Our probabilistic estimates then follow from the main `lemmas' in \cite{SaulisBuch} and the moderate deviations from the work of D\"oring and Eichelsbacher \cite{DoeringEichelsbacher}. The main technical difficulty in carrying out this approach is that only the points of the Poisson point process $\eta_\l$ close to the boundary of $\BBd$ contribute to the geometry of $\Pi_\l$, an effect that does not occur for the models considered in \cite{EichelsbacherSchreiberRaic}, but which is typical for random polytopes.

\medskip

The remaining parts of the paper are organized as follows. In Section \ref{sec:background} we introduce the formal framework and recall the necessary results from \cite{CalkaSchreiberYukich}. Our main theorems for Poisson polytopes are presented in full generality in Section \ref{sec:MainResults}, while the final Section \ref{sec:proofs} contains their proofs. In Section \ref{sec:Hyperplanes} we apply our main results to a parametric class of random polyhedra that arise from Poisson hyperplanes.

\section{Framework and background}\label{sec:background}

\subsection{Basic notions and notation}\label{sec:Basics}

\paragraph{Notation.}
In this paper we write $V_d(\,\cdot\,)$ for the $d$-dimensional volume (Lebesgue measure) of the argument set. We denote the Euclidean scalar product by $(\,\cdot\,,\,\cdot\,)$, the norm induced by it by $\|\,\cdot\,\|$, and put $\BBd:=\{x\in\RR^d:\|x\|\leq 1\}$ and $\SSd:=\{x\in\RR^d:\|x\|=1\}$. We further indicate by $\BBd(x,r)$ the ball centred at $x\in\RR^d$ with radius $r>0$ and define the constant $\k_k:=V_k(\BB^k)$, $k\in\{0,1,2,\ldots\}$. We denote by $\cH_{\SSd}^{d-1}$ the $(d-1)$-dimensional Hausdorff measure on $\SSd$. Moreover, we use the symbol $[B]$ to indicate the convex hull of a set $B\subset\RR^d$.

Let $\Sig$ be Polish space. By $\cB(\Sig)$ we denote the space of bounded measurable functions $f:\Sig\to\RR$ and we write $\cM(\Sig)$ for the space of finite signed measures on $\Sig$. For $f\in\cB(\Sig)$ and $\nu\in\cM(\Sig)$ we introduce the abbreviation
$$
\lan f,\nu\ran := \int \dint\nu\,f
$$
for the integral of $f$ with respect to $\nu$. We will further use the symbol $\cC(\Sig)$ for the space of continuous functions on $\Sig$.

\paragraph{Grassmannians.}
By $G(d,j)$ we denote the space of all $j$-dimensional linear subspaces of $\RR^d$, $j\in\{0,1,\ldots,d-1\}$. We supply $G(d,j)$ with the unique Haar probability measure $\nu_j$, see \cite{SW}. Moreover, for $L\in G(d,1)$ we put $G(L,1):=L$ and $G(L,j):=\{E\in G(d,j):L\subset E\}$ if $j\geq 2$, i.e., $G(L,j)$ is the collection of all $j$-dimensional linear subspaces of $\RR^d$ that contain the fixed line $L$. We let $\nu_j^L$ be the unique Haar probability measure on $G(L,j)$, see Chapter 7.1 in \cite{SW} for a construction. We clearly have that $\nu_1^L$ coincides with the unit-mass Dirac measure concentrated at $L$. 

\paragraph{Intrinsic volumes, $f$-vectors and extreme points.}
By a convex body we understand a compact convex subset $K\subset\RR^d$ with non-empty interior. For a convex body $K$ we denote by $V_j(K)$ the $j$th intrinsic volume of $K$, $j\in\{0,1,\ldots,d-1\}$. It is given by
$$
V_j(K) := {d\choose j}{\k_d\over\k_j\k_{d-j}}\int_{G(d,j)}\nu_j(\dint L)\, V_j(K|L)\,,
$$
where $K|L$ indicates the orthogonal projection of $K$ onto the $j$-dimensional subspace $L$ and where $V_j(K|L)$ is the $j$-volume of $K|L$, cf.\ \cite{Gruber}. In particular, $2V_{d-1}(K)$ is the surface area and $V_1(K)$ is a constant multiple of the mean width of $K$, while $V_0(K)=1$.

A polytope $P\subset\RR^d$ is the convex hull of a finite point set. For $j\in\{0,1,\ldots,d-1\}$ we write $\cF_j(P)$ for the collection of all $j$-dimensional faces of $P$ and put $f_j(P):=|\cF_j(P)|$, where $|\,\cdot\,|$ stands for the cardinality of the argument set. In particular, $\cF_0(P)$ is the set of vertices and $f_0(P)$ the vertex number of $P$. Moreover, the elements of $\cF_{d-1}(P)$ are called the facets of $P$ and $f_{d-1}(P)$ is the number of facets of $P$. The vector $(f_0(P),f_1(P),\ldots,f_{d-1}(P))$ is the so-called $f$-vector of $P$ and describes its combinatorial structure.

Recall from \cite{Gruber} that an extreme point of a convex body $K$ is a point of $K$ which does not lie in any open line segment joining two points of $K$. We write $\ext(K)$ for the set of extreme points of $K$. By the extreme points of a finite point set $\cX$ we understand the extreme points of its convex hull, i.e., $\ext(\cX):=\ext([\cX])$.

\paragraph{Poisson point processes.}
Let $\Sig$ be a Polish space. For a locally finite set $\cX$ of points in $\Sig$ and a measurable subset $B\subset\Sig$ we write $\cX(B)$ for the number of points $x\in\cX$ with $x\in B$. Let $\nu$ be a $\sigma$-finite measure on $\Sig$ without atoms. A Poisson point process $\eta$ in $\Sig$ with intensity measure $\nu$ is a locally finite set of random points in $\Sig$ with the following two properties:
\begin{itemize}
\item[-] the number $\eta(B)$ of points falling in a measurable set $B\subset\Sig$ with $\nu(B)\in(0,\infty)$ is Poisson distributed with mean $\nu(B)$,
\item[-] for $n\in\NN$ and pairwise disjoint measurable sets $B_1,\ldots,B_n\subset\Sig$ the random variables $\eta(B_1),\ldots,\eta(B_n)$ are mutually independent.
\end{itemize}
If $\Sig=\RR^d$ and $\nu$ is a constant multiple $\l\in(0,\infty)$ of the Lebesgue measure on $\RR^d$ we will speak about a stationary Poisson point process with intensity $\l$. Its distribution is invariant under the action of all (deterministic) shifts in $\RR^d$.

For a Poisson point process $\eta$ on $\Sig$ with intensity measure $\nu$ as above and a non-negative measurable function $\xi$ acting on pairs $(x,\eta)$ with $x\in\Sig$ one has that
\begin{equation}\label{eq:Mecke}
\EE\sum_{x\in\eta}\xi(x,\eta) = \int_{\Sig}\nu(\dint x)\,\EE\xi(x,\eta\cup\{x\})\,,
\end{equation}
cf.\ \cite[Theorem 3.2.5]{SW}. Identity \eqref{eq:Mecke} is the so-called Mecke equation for Poisson point processes.

\paragraph{Large and moderate deviation principles.}
One says that a family $(\nu_\l)_{\l>0}$ of probability measures on a Polish space $\Sigma$ fulfils a large deviation principle (LDP) on $\Sigma$ with speed $s_\l$ and (good) rate function $I:\Sigma\to[0,\infty]$, as $\l\to\infty$, if $\lim\limits_{\l\to\infty}s_\l=\infty$, $I$ is lower semi-continuous with compact level sets, and if
$$
-\inf_{y\in{\rm int}(B)}I(y)\leq\liminf_{\l\to\infty}{1\over s_\l}\log\nu_\l(B)\leq\limsup_{\l\to\infty}{1\over s_\l}\log\nu_\l(B)\leq-\inf_{y\in{\rm cl}(B)}I(y)
$$
for every measurable subset $B$ of $\Sigma$ with interior ${\rm int}(B)$ and closure ${\rm cl}(B)$, cf. \cite{DemboZeitouni}. 

A family $(X_\l)_{\l>0}$ of $\Sigma$-valued (and usually rescaled) random variables is said to satisfy a LDP with speed $s_\l$ and rate function $I$ if the family of their distributions does. One usually speaks about a moderate deviation principle (MDP) instead of a LDP if the rescaling of the involved random variables is between that of a law of large numbers and that of a central limit theorem.

While large deviations are strongly influenced by the distribution of the involved random variables, moderate deviations are influenced by both, the central limit theorem and the large deviation's behaviour. As for an LDP, the speed of decay of the involved probabilities in an MDP is exponential and the central limit theorem is usually reflected by the appearance of the universal Gaussian rate function $I(y)={y^2\over 2}$, which is independent of the probabilistic nature of the underlying random variables.

\subsection{The key geometric functionals}\label{sec:KeyFunctionals}

In this section we introduce the basic geometric functionals to which our main results apply. These are the missed-volume functional, the intrinsic volume functionals, the $k$-face functionals and the Voronoi-flower functional. From now on, let $\eta_\l$ be the restriction to $\BBd$ of a stationary Poisson point process with intensity $\l>0$ and denote by $\Pi_\l$ the random convex hull generated by the points of $\eta_\l$.

Let $\cX$ be a finite set of points in $\BBd$. We write $\cF_{d-1}([\cX])$ for the family of facets of the convex hull $[\cX]$ and for $f\in\cF_{d-1}([\cX])$ let $N(f)$ be the point of $f$ that is closest to the boundary $\SSd$ of $\BBd$ (if there is more than one such point, we select the first one with respect to the lexicographic ordering). Moreover, for $x\in\cX$ we define $\cF(x,\cX)$ as the collection of all facets $f\in\cF_{d-1}([\cX])$ of $[\cX]$ with $x=N(f)$ and put $\cone(\cF(x,\cX)):=\{ry:y\in \cF(x,\cX),r>0\}$. 

We start by introducing the missed-volume functional $\xi_{V_d}$. It is given by
\begin{equation}\label{eq:DefXi_r}
\xi_{V_d}(x,\cX):=V_d((\BBd\setminus [\cX])\cap\cone(\cF(x,\cX)))\,,
\end{equation}
if $x\in\ext(\cX)$ and zero otherwise.
Using $\xi_{V_d}$ we can represent the missed volume of $\Pi_\l$ in $\BBd$ as
$$
V_d(\BBd)-V_d(\Pi_\l) = \sum_{x\in\eta_\l}\xi_{V_d}(x,\eta_\l)\,.
$$

To define the intrinsic volume functionals let for $x\in\BBd\setminus\{0\}$, $L(x)\in G(d,1)$ be the line spanned by $x$ and the origin. Now, for $j\in\{1,\ldots,d-1\}$ and $y\in\BBd\setminus\{0\}$ put
\begin{equation}\label{eq:defThetak}
\theta_j(y):=\int_{G(L(y),j)}\nu_j^{L(y)}(\dint M)\,{\bf 1}(y\notin [\cX]|M)
\end{equation}
and define
\begin{equation}\label{eq:DefXi_vk}
\xi_{V_j}(x,\cX):={{d-1\choose j-1}\over\k_{d-j}}\int_{(\BB^d\setminus[\cX])\cap\cone(\cF(x,\cX))}\dint y\,\|y\|^{-(d-j)}\,\theta_j(y) 
\end{equation}
if $x\in\ext(\cX)$ and zero otherwise. 
We conclude from Lemma \ref{lem:RepVK} in the Appendix that the difference $V_j(\BBd)-V_j(\Pi_\l)$ of the $j$th intrinsic volume of $\BBd$ and $\Pi_\l$ admits the representation
$$
V_j(\BBd)-V_j(\Pi_\l) = \sum_{x\in\eta_\l}\xi_{V_j}(x,\eta_\l)\,.
$$

For $j\in\{0,1,\ldots,d-1\}$ the $j$-face functional is defined as
\begin{equation}\label{eq:DefXi_fk}
\xi_{f_j}(x,\cX):=\begin{cases}|\cF_j(x,\cX)| &: x\in\cF_0([\cX])\\ 0 &: x\notin\cF_0([\cX])\,,\end{cases}
\end{equation}
where for $x\in\cF_0([\cX])$, $\cF_j(x,\cX)$ stands for the collection of $j$-dimensional faces $f\in\cF_j([\cX])$ of $[\cX]$ with $x=N(f)$ (the definition of $N(\,\cdot\,)$ clearly extends to elements of $\cF_j([\cX])$). This implies that the number of $j$-dimensional faces of $\Pi_\l$ can be written as
$$
f_j(\Pi_\l) = \sum_{x\in\eta_\l}\xi_{f_j}(x,\eta_\l)\,.
$$

Finally, the Voronoi-flower ${\rm VF}(\cX)$ of $\cX$ is given by
$$
{\rm VF}(\cX):=\bigcup_{x\in\cX}\BBd\Big({x\over 2},{\|x\|\over 2}\Big)\,.
$$
The Voronoi-flower of a random polytope is of interest because of the following observation. Writing $h_K(u):=\max\{(u,v):v\in K\}$ for the support function of a convex body $K$ in direction $u\in\SSd$, one has that the defect support function $1-h_{[\cX]}(u)$ of $[\cX]$ is precisely the distance between $\SSd$ and ${\rm VF}(\cX)$ in direction $u$. We put
\begin{equation}\label{eq:DefXi_s}
\xi_{VF}(x,\cX):=V_d\big((\BBd\setminus {\rm VF}(\cX))\cap\cone(\cF(x,\cX))\big) 
\end{equation}
if $x\in\ext(\cX)$ and zero otherwise, and notice that
$$
V_d(\BBd\setminus{\rm VF}(\eta_\l)) = \sum_{x\in\eta_\l}\xi_{VF}(x,\eta_\l)\,,
$$
which is nothing than the integrated defect support function of $\Pi_\l$ over $\SSd$. The Voronoi-flower of $\Pi_\l$ is also a crucial object in Section \ref{sec:Hyperplanes}.

We call $\xi_{V_d},\xi_{V_j},\xi_{f_j}$ and $\xi_{VF}$ defined by \eqref{eq:DefXi_r}, \eqref{eq:DefXi_vk}, \eqref{eq:DefXi_fk} and \eqref{eq:DefXi_s}, respectively, the key geometric functionals of the random polytope $\Pi_\l$ and define $\Xi:=\{\xi_{V_d},\xi_{V_j},\xi_{f_j},\xi_{VF}\}$. It is crucial for our purposes that each of the geometric characteristics $V_d(\BBd)-V_d(\Pi_\l)$, $V_j(\BBd)-V_j(\Pi_\l)$ with $j\in\{1,\ldots,d-1\}$, $f_j(\Pi_\l)$ with $j\in\{0,\ldots,d-1\}$ and $V_d(\BBd\setminus{\rm VF}(\eta_\l))$ is representable as
$$
\sum_{x\in\eta_\l}\xi(x,\eta_\l)
$$
with some key geometric functional $\xi\in\Xi$.

\subsection{Rescaled functionals}\label{sec:RescaledFct}

Let $n:=(0,0,\ldots,1)$ be the north pole of $\SSd$ and identify the tangent space ${\rm Tan}(\SSd,n)$ of $\SSd$ at $n$ with the $(d-1)$-dimensional Euclidean space $\RR^{d-1}$. The exponential map $\exp:{\rm Tan}(\SSd,n)\to\SSd$ transforms a vector $u\in{\rm Tan}(\SSd,n)$ into a point $\exp(u)\in\SSd$ such that $\exp(u)$ lies at the end of a geodesic ray of length $\|u\|$ and direction $u$ emanating from $n$, see Figure \ref{fig2}. In particular $\exp(n)=0$. (The exponential map should not be confused with the exponential function which is denoted by the same symbol, but the meaning will always be clear from the context.) Although the exponential map is well defined on the whole tangent space, its injectivity region is $\BB^{d-1}(0,\pi):=\{u\in {\rm Tan}(\SSd,n):\|u\|<\pi\}$, the centred (open) ball in $\RR^{d-1}$ with radius $\pi$. Let us further denote by $\exp^{-1}$ the inverse of the exponential map, which is well defined on $\SSd\setminus\{-n\}$.

\begin{figure}[t]
\centering
\includegraphics[width=\columnwidth]{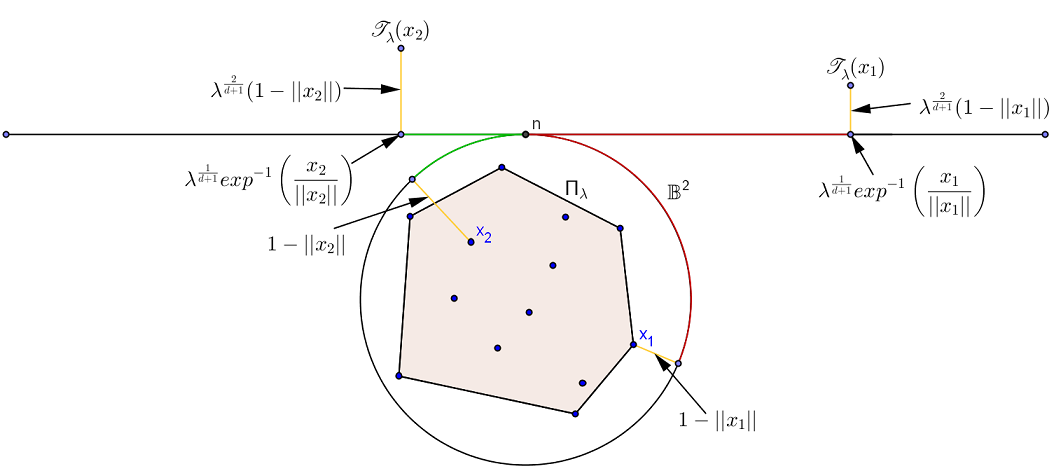}
\caption{Illustration of the scaling transformation $\sT_\l$.}
\label{fig2}
\end{figure}

Following \cite{CalkaSchreiberYukich}, we define the scaling transformation $\sT_\l$ mapping $\BBd$ to $\RR^{d-1}\times\RR_+$ by
\begin{equation}\label{eq:ScalingTrafo}
\sT_\l(x) := \Big(\l^{1\over d+1}\exp^{-1}\Big({x\over\|x\|}\Big),\l^{2\over d+1}(1-\|x\|)\Big)\,,\qquad x\in\BBd\setminus\{0\}\,.
\end{equation}
In particular, we notice that, by the well-known mapping properties of Poisson point processes, $\sT_\l$ maps the Poisson point process $\eta_\l$ to another Poisson point process $\eta_\l^\sT$ in the region
$$
\cR_\l:=\l^{1/(d+1)}\BB^{d-1}(0,\pi)\times[0,\l^{2/(d+1)})\subset\RR^{d-1}\times\RR_+
$$
(note that with probability one, neither $-n$ nor $0$ is not a point of $\eta_\l$, meaning that the definition of $\exp$ and $\exp^{-1}$ at these points is irrelevant). In what follows, we parametrize the points of $\RR^{d-1}\times\RR_+$ as pairs $(v,h)$ with $v\in\RR^{d-1}$ and $h\in\RR_+$. Using this parametrization, it is known from Equation (2.14) in \cite{CalkaSchreiberYukich} that the intensity measure of $\eta_\l^\sT$ has density
$$
(v,h)\mapsto{\sin^{d-2}(\l^{-1/(d+1)}\|v\|)\over \|\l^{-1/(d+1)}v\|^{d-2}}(1-\l^{-2/(d+1)}h)^{d-1}
$$
with respect to the Lebesgue measure on $\cR_\l$. In particular, this implies that the limit process of $\eta_\l^\sT$, as $\l\to\infty$, is a Poisson point process $\eta$ on the whole half-space $\RR^{d-1}\times\RR_+$ whose intensity measure coincides with the Lebesgue measure on that space (the convergence has to be understood in the sense of total variation distance of measures on compacts when $\eta_\l^\sT$ and $\eta$ are regarded as random counting measures).

Using the scaling transformation $\sT_\l$ we define the collection of rescaled key geometric functionals. Let $\xi\in\Xi$ and put
$$
\xi^{(\l)}(x,\cX) := \xi(\sT_\l^{-1}(x),\sT_\l^{-1}(\cX))
$$
for a locally finite point set $\cX$ in the region $\cR_\l$ and $x\in\cX$. We set $\Xi^{(\l)}:=\{\xi^{(\l)}:\xi\in\Xi\}$.

One of the crucial features of the above scaling transformation is that the rescaled functionals $\xi^{(\l)}$ exhibit a weak spatial dependence property in the following sense. A random variable $R:=R(\xi,x,\l)$ is called a radius of localization for $\xi^{(\l)}$ if, with probability one,
$$
\xi^{(\l)}(x,\eta_\l^{\sT})=\xi^{(\l)}\big(x,\eta_\l^\sT\cap {\rm Cyl}(x,r)\big)
$$
for all $r\geq R$. Here, for a point $x=(v,h)$, ${\rm Cyl}(x,r)$ stands for the cylinder $\BB^{d-1}(v,r)\times\RR_+$. It has been shown in \cite{CalkaSchreiberYukich} that $R$ has super-exponentially decaying tails uniformly in $x$ and $\l$. More formally, one can find constants $c_1,c_2\in(0,\infty)$ only depending on $\xi$ such that
\begin{equation}\label{eq:ExpStabilization}
\PP(R\geq u) \leq c_1 \exp(-c_1^{-1}u^{d+1})\,,\qquad u>0\,,
\end{equation}
for all $\l\geq c_2$, uniformly in $x$ and $\l$. In addition, \cite{CalkaSchreiberYukich} shows that the weaker estimate
\begin{equation}\label{eq:ExpStabilization2}
\PP(R\geq u) \leq c_1 \exp(-c_1^{-1}u)\,,\qquad u>0\,,
\end{equation}
is also satisfied.
Moreover, defining the individual scaling exponents $e[\xi]$ by
$$
e[\xi]:=\begin{cases} 1 &: \xi\in\{\xi_{V_d},\xi_{V_j},\xi_{VF}\}\\ 0 &: \xi=\xi_{f_j}\,,\end{cases}
$$
one has that the random variables $\l^{e[\xi]}\xi^{(\l)}(x,\eta_\l^\sT)$ have moments of all orders, i.e., for all $p\geq 1$ one has that
\begin{equation}\label{eq:MomentsAllOrders}
\sup_{\l\geq 1}\sup_{x\in\cR_\l}\sup_{|\cX|\leq p}\EE|\l^{e[\xi]}\xi^{(\l)}(x,\eta_\l^\sT\cup\cX)|^{p} <\infty\,,
\end{equation}
where the innermost supremum runs over all subsets $\cX\subset\cR_\l$ with $x\in\cX$ and at most $p$ elements.

\subsection{A generalized growth process and its scaling limit}\label{sec:GenGrowth}

With the rescaled Poisson point process $\eta_\l^\sT$ defined in the previous section one can associate what has been called a generalized growth process in \cite{CalkaSchreiberYukich,SchreiberYukich}. We denote by $d_s$ the usual geodesic distance on $\SSd$ and for $x=(v,h)\in\RR^{d-1}\times\RR_+$ we define the set
\begin{align*}
\Pi_{x,\l}^{\rm up} := \{(v',h')\in\RR^{d-1}\times\RR_+:h'\geq h+\l^{2\over d+1}(1-\cos\omega(v',v))-h'(1-\cos\omega(v',v))\}\,,
\end{align*}
where $\omega(v',v):=d_s(\exp(\l^{-{1\over d+1}}v),\exp(\l^{-{1\over d+1}}v'))$. The generalized growth process $\Psi^{(\l)}$ is now given by
$$
\Psi^{(\l)} := \bigcup_{x\in\eta_\l^\sT}\Pi_{x,\l}^{\rm up}\,.
$$
It can be used to describe the geometry of the random polytopes $\Pi_\l$. Namely, let $s_\l(u):=1-h_{\Pi_\l}(u)$ be the defect support function of $\Pi_\l$ in direction $u\in\SSd$ and let for $v\in\l^{1\over d+1}\BB^{d-1}(0,\pi)$,
$$
s_\l^\sT(v):=\l^{2\over d+1}\,s_\l\Big(\exp\Big(\l^{-{1\over d+1}}v\Big)\Big)
$$
be the rescaled defect support function. Then the lower boundary $\partial\Psi^{(\l)}$ of $\Psi^{(\l)}$ (i.e., the continuous random surface that bounds the random set $\Psi^{(\l)}$ from below) coincides with the graph of $s_\l^\sT$, see \cite{CalkaSchreiberYukich}.

We say that a particle of $\Psi^{(\l)}$ is extreme if it is not completely covered by other particles and we denote the set of extreme points of the extreme particles of $\Psi^{(\l)}$ by $\ext(\Psi^{(\l)})$. In particular, we notice that the image under the scaling transformation $\sT_\l$ of the set of extreme points of the random polytope $\Pi_\l$ coincides with $\ext(\Psi^{(\l)})$. Let us recall from \cite[Lemma 3.2]{SchreiberYukich} that the probability that a point $x=(v,h)\in\cR_\l$ belongs to $\ext(\Psi^{(\l)})$ decays exponentially with the height $h$ of $x$. More precisely, we have that there are universal constants $c_1,c_2\in(0,\infty)$ such that
\begin{equation}\label{eq:ExtremExpDecay}
\PP(x\in\ext(\Psi^{(\l)})) \leq c_1\,\exp\Big(-c_1^{-1}h^{{d+1\over 2}}\Big)
\end{equation}
for all $\l\geq c_2$, uniformly in $x$. The results in \cite{SchreiberYukich} also provide the weaker estimate
\begin{equation}\label{eq:ExtremExpDecay2}
\PP(x\in\ext(\Psi^{(\l)})) \leq c_1\,\exp\big(-c_1^{-1}h\big)\,.
\end{equation}

For completeness, we also mention that the generalized growth processes $\Psi^{(\l)}$ have a scaling limit, as $\l\to\infty$. To describe it, let $\eta$ be a Poisson point process in the upper half-space $\RR^{d-1}\times\RR_+$ whose intensity measure coincides with the Lebesgue measure. Following \cite{CalkaSchreiberYukich} we define the upward paraboloid process $\Psi$ in $\RR^{d-1}\times\RR_+$ with respect to $\eta$ as
$$
\Psi:=\bigcup_{x\in\eta}(x\oplus\Pi^{\rm up})\,,
$$
where $\oplus$ is the usual Minkowski sum and where $\Pi^{\rm up}$ stands for the upward paraboloid
$$
\Pi^{\rm up}:=\big\{(v,h)\in\RR^{d-1}\times\RR_+:h\geq \|v\|^2/2\big\}\,.
$$
In the terminology of stochastic geometry this means that $\Psi$ is a Boolean model with germ process $\eta$ and grains equal to $\Pi^{\rm up}$. We denote by $\partial\Psi$ the lower boundary of $\Psi$. Now, one has that for all $R>0$, $\partial\Psi^{(\l)}$ converges to $\partial\Psi$ on the space $\cC(\BB^{d-1}(0,R))$ supplied with the supremum norm, as $\l\to\infty$. In other words this is to say that for all $R>0$, the graph of the rescaled defect support function $s_\l^\sT$ converges on $\cC(\BB^{d-1}(0,R))$ to $\partial\Psi$, as $\l\to\infty$.

\begin{remark}\rm 
Using the upward paraboloid process $\Psi$ and its dual, the so-called paraboloid hull process introduced in \cite{CalkaSchreiberYukich}, one can explicitly describe various asymptotic expectation and variance constants of the functionals introduced in Section \ref{sec:KeyFunctionals}. However, these results are not used in what follows and for this reason we refer the reader to \cite{CalkaSchreiberYukich} for further details.
\end{remark}

\section{Main results for Poisson polytopes}\label{sec:MainResults}

Let $\eta_\l$ be the restriction to $\BBd$ of a stationary Poisson point process in $\RR^d$ with intensity $\l>0$ and let $\Pi_\l$ be the convex hull of $\eta_\l$. For a key geometric functional $\xi\in\Xi$ introduced in Section \ref{sec:KeyFunctionals} and its rescaled version $\xi^{(\l)}$ as considered in Section \ref{sec:RescaledFct} we introduce the spatial empirical measure
\begin{equation}\label{eq:DefMu}
\mu_\l^\xi := \sum_{x\in\eta_\l}\xi(x,\eta_\l)\,\d_x=\sum_{x'\in\eta_\l^\sT}\xi^{(\l)}(x',\eta_\l^\sT)\,\d_{\sT_\l^{-1}(x')}\,,
\end{equation}
where $\d_x$ stands for the unit-mass Dirac measure at $x$. The centred version of $\mu_\l^\xi$ is throughout denoted by $\bar\mu_\l^\xi:=\mu_\l^\xi-\EE\mu_\l^\xi$. We further define for $f\in\cB(\BBd)$ the quantity $\sigma_\l^\xi[f]:=(\var\lan f,\l^{e[\xi]}\mu_\l^\xi\ran)^{1/2}$ and emphasize that for each of the key geometric functionals $\xi$ we consider, there exists a constant $c>0$ only depending on $d$ and $\xi$ such that for all $\l\geq c$ one has that
\begin{equation}\label{eq:VarLowerBound}
\sigma_\l^\xi[f]\geq C\,\lan f^2,\cH_{\SSd}^{d-1}\ran^{1\over 2}\,\l^{{d-1\over 2(d+1)}}
\end{equation}
if $f\in\cC(\BBd)$ with another constant $C\in(0,\infty)$ that depends only on $d$ and on $\xi$. This follows from the variance considerations in \cite{BaranyReitznerVariance,CalkaSchreiberYukich,ReitznerClt05} and we point out that the continuity of $f$ has essentially been used there to derive the lower variance bound \eqref{eq:VarLowerBound}. For this reason we also assume continuity of $f$ in our results.

\medskip

Our first result is a general concentration inequality for integrals with respect to the empirical measure induced by our key geometric functionals; proofs are postponed to Section \ref{sec:proofs}. In particular, for $f\in\cC(\BBd)$ we shall derive an exponential estimate for the probability
$$
\PP\big(|\lan f,\l^{e[\xi]}\bar\mu_\l^\xi\ran|\geq y\,\sigma_\l^\xi[f]\big)\,,
$$
where, recall, $e[\xi]$ is the scaling exponent of the functional $\xi$. Theorem \ref{thm:LDI} presented in the introduction is a special case of this result. We also define the individual weights
\begin{equation}\label{eq:DEF}
w[\xi] := \begin{cases} 2 &: \xi\in\{\xi_{V_1},\ldots,\xi_{V_d},\xi_{VF}\} \\ j &: \xi=\xi_{f_j}\ \text{for some}\ j\in\{0,\ldots,d-1\}\end{cases}
\end{equation}
of the key geometric functionals that originate from the moment condition in Lemma \ref{lem:MomentsAllOrders} below and that appear in all our findings.

\begin{theorem}\label{thm:LDIGeneral}
Let $f\in\cC(\BBd)$, $\xi\in\Xi$ and $y\geq 0$. Suppose that $\lan f^2,\cH_{\SSd}^{d-1}\ran\neq 0$. Then, for $\l\geq c_1$,
\begin{equation}\label{eq:LDIGeneral}
\PP\big(|\lan f,\l^{e[\xi]}\bar\mu_\l^\xi\ran|\geq y\,\sigma_\l^\xi[f]\big)\leq 2\exp\Bigg(-{1\over 4}\min\Big\{{y^2\over 2^{d+w[\xi]+4}},c_2\l^{d-1\over 2(d+1)(d+w[\xi]+4)}y^{1\over d+w[\xi]+4}\Big\}\Bigg)
\end{equation}
with constants $c_1,c_2\in(0,\infty)$ only depending on $d$ and $\xi$, or on $d$, $\xi$ and $f$, respectively.
\end{theorem}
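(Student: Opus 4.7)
The plan is to apply the cumulant method of Saulis and Statulevi\v{c}ius. The core task is to establish, for all $k\geq 3$, a cumulant bound of the form
\[
\bigl|\cum_k\bigl(\lan f,\l^{e[\xi]}\mu_\l^\xi\ran\bigr)\bigr| \leq (k!)^{1+\gamma}\,\Delta_\l^{-(k-2)}\,(\sigma_\l^\xi[f])^2,
\]
with $\gamma := d+w[\xi]+3$ and $\Delta_\l := c\,\l^{(d-1)/[2(d+1)]}$ for a constant $c>0$ depending only on $d$, $\xi$ and $f$. Feeding this into the general Lemma~2.4 of \cite{SaulisBuch}, applied to the normalised variable $\lan f,\l^{e[\xi]}\bar\mu_\l^\xi\ran/\sigma_\l^\xi[f]$, produces exactly the right-hand side of \eqref{eq:LDIGeneral}: the Gaussian exponent $y^2/2^{1+\gamma}$ matches $y^2/2^{d+w[\xi]+4}$ and the sub-exponential exponent $(\Delta_\l y)^{1/(1+\gamma)}$ matches the second entry of the minimum, with $c_2$ absorbing the constant in $\Delta_\l$.

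\textbf{Cumulant bound via cluster expansion.} To derive the cumulant estimate I would adapt the cluster expansion of \cite{BaryshnikovYukich} together with the refinements from \cite{EichelsbacherSchreiberRaic}, working throughout in the rescaled picture of Section~\ref{sec:RescaledFct}. Iterated use of the Mecke formula \eqref{eq:Mecke} expresses $\cum_k(\lan f,\l^{e[\xi]}\mu_\l^\xi\ran)$ as a finite sum, indexed by set partitions of $\{1,\dots,k\}$, of integrals over $\cR_\l^k$ of $k$-point connected correlations of $f(\sT_\l^{-1}(\cdot))\,\l^{e[\xi]}\xi^{(\l)}(\cdot,\eta_\l^\sT\cup\{x_1,\dots,x_k\})$. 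The key point is that these connected correlations vanish whenever the $x_i$'s decompose into two groups whose localisation cylinders are disjoint. The exponentially decaying tail of the localisation radius \eqref{eq:ExpStabilization2} therefore forces the horizontal distances between the $x_i$'s to be of order one, while the extreme-point tail \eqref{eq:ExtremExpDecay2} forces the height coordinates into layers of bounded thickness. Combined with the moment bound \eqref{eq:MomentsAllOrders}, each added point in a cluster costs a factor $\l^{-(d-1)/[2(d+1)]}$, while the remaining free point integrates over the $(d-1)$-dimensional rescaled boundary surface producing a factor of volume of order $\l^{(d-1)/(d+1)}$ that matches the variance order in \eqref{eq:VarLowerBound}.

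\textbf{Main obstacle.} The principal technical difficulty is the precise bookkeeping of combinatorial factors so that the factorial exponent is exactly $1+\gamma = d+w[\xi]+4$. Three sources contribute: (i) the number of set partitions in the cumulant-moment formula, (ii) the growth as $p=k$ increases of the constants in \eqref{eq:MomentsAllOrders}, which scale like $k^{w[\xi]k}$ for the $j$-face functional because the number of faces incident to a single vertex is controlled by a power of the localisation radius, and (iii) the spatial integrals over stabilisation balls in dimension $d$, which contribute another power of $k!$. Consolidating these powers into $(k!)^{d+w[\xi]+4}$ under the integrals, and then normalising by $(\sigma_\l^\xi[f])^2$ via the lower bound \eqref{eq:VarLowerBound} (where continuity of $f$ together with $\lan f^2,\cH_{\SSd}^{d-1}\ran\neq 0$ guarantees non-degeneracy), is what pins down the exponents in the final inequality. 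A secondary point requiring care is that only Poisson points within a $\l^{-2/(d+1)}$-neighbourhood of $\SSd$ affect the geometry of $\Pi_\l$; this boundary effect, absent from \cite{EichelsbacherSchreiberRaic}, is accommodated automatically by the scaling $\sT_\l$ and the explicit intensity density of $\eta_\l^\sT$ recalled in Section~\ref{sec:RescaledFct}.
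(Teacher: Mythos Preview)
Your proposal is correct and follows essentially the same route as the paper: Proposition~\ref{prop:CumulantEstimate} delivers the raw cumulant bound $|\lan f^{\otimes k},c_\l^k\ran|\leq c\,c'^k\|f\|_\infty^k(k!)^{d+w[\xi]+4}\l^{(d-1)/(d+1)}$ via the cluster expansion of \cite{BaryshnikovYukich,EichelsbacherSchreiberRaic} combined with the exponential localisation \eqref{eq:ExpStabilization2}, the extreme-point decay \eqref{eq:ExtremExpDecay2}, and a refined moment bound (Lemma~\ref{lem:MomentsAllOrders}) giving the $(k!)^{w[\xi]}$ factor; one then normalises by $(\sigma_\l^\xi[f])^k$ using \eqref{eq:VarLowerBound} and invokes Lemma~2.4 of \cite{SaulisBuch} with exactly your choice $\gamma=d+w[\xi]+3$, $\Delta_\l=c\,\l^{(d-1)/(2(d+1))}$. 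Your accounting of the factorial sources is accurate in spirit; the paper's bookkeeping routes one power of $k!$ through the coefficient bound \eqref{eq:EstimateCoeffDecom}, one through the partition sum in the singular differential, two through a $k^{2k}$ term arising from tree counts and iterated height integrals (Lemmas~\ref{lem5:T1}--\ref{lem7:IntEstimate}), and $d$ through a $(dk)!$ factor from the spatial integration, which together with the $w[\xi]$ from moments assembles to the exponent $d+w[\xi]+4$.
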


Theorem \ref{thm:LDIGeneral}  should be related to the existing results in the literature. In case that $\xi=\xi_{f_0}$, $\lan 1,\l^{e[\xi]}\bar\mu_\l^\xi\ran$ is the centred vertex number and a detailed discussion has already been presented in the introduction. The only other result in the literature we are aware of is a concentration inequality in \cite{VuConcentration} for the missed volume (and its closely related version in \cite{ReitznerSurvey}). Its structure is basically the same as that of the corresponding inequality \eqref{eq:VuF0} for the vertex number. In particular, this estimate contains a boundary term $p_{NT}$ which does not depend on $y$ and is valid only for arguments $y$ in a certain range around zero that depends on $\l$. 

\medskip

In contrast to Theorem \ref{thm:LDIGeneral}, for the next results we could not locate counterparts in the existing literature. To the best of our knowledge, Theorem \ref{thm:MDProbasGeneral} as well as the moderate deviation principles in Theorem \ref{thm:MDPScalarGeneral} and Theorem \ref{thm:MDPMeasureGeneral} seem to be the first results in this direction in the context of random polytopes. We start with the asymptotic behaviour of deviation probabilities related to the relative error in the central limit theorem. More precisely, for a key geometric functional $\xi\in\Xi$ and $f\in\cC(\BBd)$ we are seeking for bounds on the relative error
\begin{equation}\label{eq:RelativeError}
{\PP(\lan f,\l^{e[\xi]}\bar\mu_\l^{\xi}\ran\geq \sigma_\l^\xi[f]\,y)\over 1-\Phi(y)}\,,
\end{equation}
where $\Phi(\,\cdot\,)$ is the distribution function of the standard Gaussian random variable. In addition, we are interested in conditions on $y=y(\l)$ in terms of $\l$ under which the expression in \eqref{eq:RelativeError} converges to $1$, as $\l\to\infty$. It is readily seen that Theorem \ref{thm:MDProbabs} presented in the introduction is a special case of the next theorem. 

\begin{theorem}\label{thm:MDProbasGeneral}
Let $f\in\cC(\BBd)$ with $\lan f^2,\cH_{\SSd}^{d-1}\ran\neq 0$ and $\xi\in\Xi$. For $0\leq y\leq c_5\l^{d-1\over 2(d+1)(2(d+w[\xi])+7)}$ and $\l\geq c_6$ one has that
\begin{align*}
\Bigg|\log{\PP(\lan f,\l^{e[\xi]}\bar\mu_\l^{\xi}\ran\geq \sigma_\l^\xi[f]\,y)\over 1-\Phi(y)}\Bigg| &\leq c_7\,(1+y^3)\,\l^{-{d-1\over 2(d+1)(2(d+w[\xi])+7)}}\quad\text{and}\\
\Bigg|\log{\PP(\lan f,\l^{e[\xi]}\bar\mu_\l^{\xi}\ran\leq -\sigma_\l^\xi[f]\,y)\over \Phi(-y)}\Bigg| &\leq c_7\,(1+y^3)\,\l^{-{d-1\over 2(d+1)(2(d+w[\xi])+7)}}
\end{align*}
with constants $c_5,c_6,c_7\in(0,\infty)$ only depending on $d$, on $\xi$ and on $f$.
\end{theorem}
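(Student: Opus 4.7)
The strategy is the method of cumulants. Let
$$W_\l := \frac{\lan f,\l^{e[\xi]}\bar\mu_\l^\xi\ran}{\sigma_\l^\xi[f]}\,,$$
so that $\EE W_\l=0$ and $\var W_\l=1$. The plan is to apply the moderate deviation lemma of Saulis--Statulevi\v{c}ius \cite{SaulisBuch} in the refined form of D\"oring--Eichelsbacher \cite{DoeringEichelsbacher}: if one can show that
$$|\Gamma_k(W_\l)|\leq\frac{(k!)^{1+\gamma}}{\Delta_\l^{k-2}}\qquad\text{for all }k\geq 3\,,$$
with some $\gamma\geq 0$ and a quantity $\Delta_\l\to\infty$, then the relative-error bounds in the claim follow with power $\Delta_\l^{-1/(1+2\gamma)}$ and admissible range $0\leq y\leq c\,\Delta_\l^{1/(2(1+\gamma))}$. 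In order to match the exponent $\frac{d-1}{2(d+1)(2(d+w[\xi])+7)}$ stated in the theorem I would prove such a bound with $\gamma = d+w[\xi]+3$ and $\Delta_\l\asymp\sigma_\l^\xi[f]\asymp\l^{(d-1)/(2(d+1))}$, the latter being guaranteed by the variance lower bound \eqref{eq:VarLowerBound}.

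The main work is therefore to establish this cumulant estimate. Passing to the rescaled picture through $\sT_\l$, I would expand $\Gamma_k(\lan f,\l^{e[\xi]}\mu_\l^\xi\ran)$ via the cumulant--cluster decomposition of Baryshnikov--Yukich \cite{BaryshnikovYukich}, as developed for Poisson stabilizing functionals by Eichelsbacher--Rai\v{c}--Schreiber \cite{EichelsbacherSchreiberRaic}. Using the Mecke formula \eqref{eq:Mecke}, $\Gamma_k$ becomes a sum, indexed by connected clustering patterns of $\{1,\dots,k\}$, of integrals of the form
$$\int_{\cR_\l^k}\dint x_1\cdots\dint x_k\;\prod_{i=1}^k f(\sT_\l^{-1}(x_i))\;\EE\Big[\prod_{i=1}^k\xi^{(\l)}(x_i,\eta_\l^\sT\cup\{x_1,\ldots,x_k\})\Big]_c\,,$$
where $[\,\cdot\,]_c$ denotes the connected (cluster) part. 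These cluster integrals I would control by a two-step localization: the super-exponential decay of the localization radius \eqref{eq:ExpStabilization} confines the spatial projections of the $x_i$ to a bounded cylinder in the $v$-coordinate, while the extremality decay \eqref{eq:ExtremExpDecay} confines the heights $h_i$. A H\"older step against the moment bound \eqref{eq:MomentsAllOrders} then makes each cluster integral finite.

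The step-by-step execution is: (i) perform the cumulant--cluster expansion in rescaled coordinates; (ii) bound each connected integral by H\"older combined with \eqref{eq:MomentsAllOrders} and harvest, from the volume of the admissible cluster region, one factor $\l^{(d-1)/(d+1)}$ per independent cluster coming from the $(d-1)$-dimensional boundary $\cH_{\SSd}^{d-1}$--measure (this makes precise the fact that only a boundary layer of $\BBd$ contributes); (iii) combine with \eqref{eq:VarLowerBound} to see that each extra cumulant order $k\geq 3$ costs a factor $\sigma_\l^\xi[f]^{-1}$; (iv) normalise by $\sigma_\l^\xi[f]^k$ to pass to $W_\l$ and invoke the D\"oring--Eichelsbacher moderate deviation lemma. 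The lower-tail estimate in the theorem follows symmetrically.

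The principal obstacle is step (ii), where one must extract a factor $(k!)^{1+\gamma}$ -- rather than the $(k!)^1$ available for uniformly bounded functionals -- from the combinatorics of the cluster integrals. This requires delicate bookkeeping of the connected partitions of $\{1,\dots,k\}$ together with the polynomial-in-$p$ growth of the $p$-th moments of $\l^{e[\xi]}\xi^{(\l)}$: for the $j$-face functional this growth is governed by $w[\xi]=j$ (since a configuration of $n$ Poisson points can create $\sim n^{j}$ many $j$-faces with a given apex), while for the volume-type functionals $\xi_{V_d},\xi_{V_j},\xi_{VF}$ it is governed by the constant $w[\xi]=2$ appearing in \eqref{eq:DEF}. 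It is precisely this moment growth that fixes $\gamma=d+w[\xi]+3$, and therefore the exponent $\frac{d-1}{2(d+1)(2(d+w[\xi])+7)}$ in the statement. Once the cumulant bound is in place, the proof concludes with a direct application of the D\"oring--Eichelsbacher lemma.
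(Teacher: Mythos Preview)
Your plan is essentially the paper's proof: establish the cumulant bound (this is Proposition~\ref{prop:CumulantEstimate}) via the cluster decomposition of \cite{BaryshnikovYukich,EichelsbacherSchreiberRaic} adapted to the rescaled picture, combine it with the variance lower bound \eqref{eq:VarLowerBound} to obtain $|c^k(W_\l)|\leq (k!)^{1+\gamma}\Delta_\l^{-(k-2)}$ with $\gamma=d+w[\xi]+3$ and $\Delta_\l\asymp\l^{(d-1)/(2(d+1))}$, and then invoke the abstract lemma. Two small corrections are in order. First, the relative-error lemma you need here is Lemma~2.3 in Saulis--Statulevi\v{c}ius \cite{SaulisBuch} (equivalently Corollary~3.2 in \cite{EichelsbacherSchreiberRaic}); the D\"oring--Eichelsbacher result \cite{DoeringEichelsbacher} yields the MDP of Theorem~\ref{thm:MDPScalarGeneral}, not the log-ratio bounds. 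Second, the admissible range that this lemma produces is $0\leq y\leq c\,\Delta_\l^{1/(1+2\gamma)}$, not $\Delta_\l^{1/(2(1+\gamma))}$; with $\gamma=d+w[\xi]+3$ this gives $1+2\gamma=2(d+w[\xi])+7$, which is exactly the exponent in the statement, whereas your $2(1+\gamma)=2(d+w[\xi])+8$ would not match.
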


\begin{remark}\rm 
Our methods also allow to derive precise estimates for the relative error in \eqref{eq:RelativeError}, which involve the so-called Cram\'er-Petrov series, cf.\ \cite{SaulisBuch}. To keep the result simple and to avoid heavy notation, we decided to state it here in a form which suppresses higher-order terms of the asymptotic exponential expansion.
\end{remark}

After having investigated large and moderate deviation probabilities, we turn now to a moderate deviation principle in a partial intermediate regime of rescalings between that of a central limit theorem and a law of large numbers. In particular, for all key geometric functionals $\xi\in\Xi$, functions $f\in\cC(\BBd)$ and for sets $B\subset\RR$ of the form $B=[y,\infty)$ with $y\in\RR$ we will see that
$$
\lim_{\l\to\infty}{1\over a_\l^{2}}\log\PP\Big({1\over a_\l}{\lan f,\l^{e[\xi]}\bar\mu_\l^{\xi}\ran\over\sigma_\l^\xi[f]}\in B\Big) = -{y^2\over 2}
$$
for all rescalings $a_\l$ satisfying the growth condition \eqref{eq:GrowthMDP} below. It is clear that Theorem \ref{thm:MDPScalar} in the introduction is a special case of this result.

\begin{theorem}\label{thm:MDPScalarGeneral}
Let $f\in\cC(\BBd)$ with $\lan f^2,\cH_{\SSd}^{d-1}\ran\neq 0$ and $\xi\in\Xi$. Further, let $(a_\l)_{\l>0}$ be such that
\begin{equation}\label{eq:GrowthMDP}
\lim_{\l\to\infty}a_\l=\infty\qquad\text{and}\qquad\lim_{\l\to\infty}a_\l\l^{-{d-1\over 2(d+1)(2(d+w[\xi])+7)}}=0\,.
\end{equation}
Then $\Big({1\over a_\l}{\lan f,\l^{e[\xi]}\bar\mu_\l^{\xi}\ran\over\sigma_\l^\xi[f]}\Big)_{\l>0}$ satisfies a moderate deviation principle on $\RR$ with speed $a_\l^2$ and rate function $I(y)={y^2\over 2}$.
\end{theorem}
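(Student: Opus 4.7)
The moderate deviation principle is to be deduced directly from the sharp logarithmic tail asymptotics already supplied by Theorem \ref{thm:MDProbasGeneral}, combined with Mills' ratio for the Gaussian distribution and a standard half-line decomposition of arbitrary Borel sets. Write $Y_\l := \lan f,\l^{e[\xi]}\bar\mu_\l^\xi\ran / \sigma_\l^\xi[f]$ and abbreviate $\alpha := (d-1)/(2(d+1)(2(d+w[\xi])+7))$. For each fixed $y > 0$, the growth hypothesis \eqref{eq:GrowthMDP} guarantees $a_\l y \leq c_5 \l^\alpha$ for all sufficiently large $\l$, so Theorem \ref{thm:MDProbasGeneral} applied with the argument $a_\l y$ in place of $y$ yields
\begin{equation*}
\bigl|\log \PP(Y_\l \geq a_\l y) - \log(1 - \Phi(a_\l y))\bigr| \leq c_7\,(1 + a_\l^3 y^3)\,\l^{-\alpha}.
\end{equation*}
Dividing by $a_\l^2$, the right-hand side is $O(a_\l y^3 \l^{-\alpha}) + O(a_\l^{-2} \l^{-\alpha}) = o(1)$ again by \eqref{eq:GrowthMDP}. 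Combined with the Mills ratio asymptotic $\log(1-\Phi(z)) = -z^2/2 + O(\log z)$ as $z\to\infty$, this gives
\begin{equation}\label{eq:PlanHalfLine}
\lim_{\l\to\infty}\frac{1}{a_\l^2}\log\PP(Y_\l/a_\l \geq y) = -\frac{y^2}{2}, \qquad y > 0,
\end{equation}
with the symmetric statement for the left tail following from the second bound in Theorem \ref{thm:MDProbasGeneral}.

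To upgrade \eqref{eq:PlanHalfLine} to an MDP on $\RR$ with the (evidently good) rate function $I(y) = y^2/2$, I verify the standard upper and lower bounds. For the upper bound on a closed $F \subset \RR$: if $0 \in F$ then $\inf_F I = 0$ and the bound is trivial; otherwise set $y_+ := \inf(F \cap (0,\infty))$ and $y_- := -\sup(F \cap (-\infty,0))$, with the usual conventions when one intersection is empty. Then $F \subset (-\infty,-y_-] \cup [y_+,\infty)$ and $\inf_F I = \tfrac{1}{2}\min(y_+^2, y_-^2)$, so the union bound together with \eqref{eq:PlanHalfLine} and its left-tail analogue delivers the required $\limsup$ upper bound. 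For the lower bound on an open $G \subset \RR$, the case $0 \in G$ is immediate since then $\inf_G I = 0$ and $\PP(Y_\l/a_\l \in G) \to 1$ by the central limit theorem implicit in the cumulant estimates underpinning Theorem \ref{thm:MDProbasGeneral}. Otherwise, fix $y \in G$ with $y^2/2$ arbitrarily close to $\inf_G I$; assuming $y > 0$ without loss of generality, choose $\delta > 0$ small enough that $[y,y+\delta] \subset G$, and write
\begin{equation*}
\PP(Y_\l/a_\l \in [y,y+\delta]) = \PP(Y_\l/a_\l \geq y) - \PP(Y_\l/a_\l > y+\delta).
\end{equation*}
Since $(y+\delta)^2 > y^2$, the first term dominates exponentially on scale $a_\l^2$, so \eqref{eq:PlanHalfLine} yields $\liminf (1/a_\l^2) \log \PP(Y_\l/a_\l \in G) \geq -y^2/2$; letting $y$ approach the minimiser of $I$ on $G$ completes the argument.

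The hard work does not lie in this reduction but is absorbed in Theorem \ref{thm:MDProbasGeneral}, whose proof rests on sharp cumulant estimates for $Y_\l$ and the Saulis--Statulevi\v{c}ius machinery from \cite{SaulisBuch,DoeringEichelsbacher}. The only matching that needs to be checked in the present step is that the exponent $\alpha$ governing the error in Theorem \ref{thm:MDProbasGeneral} is precisely the one appearing in the growth condition \eqref{eq:GrowthMDP}: this is what makes the error term $o(a_\l^2)$ exactly when $a_\l$ stays in the admissible moderate range, and is the reason both statements carry the identical exponent $(d-1)/(2(d+1)(2(d+w[\xi])+7))$.
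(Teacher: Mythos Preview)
Your proof is correct but follows a different route from the paper. The paper proves Theorems \ref{thm:LDIGeneral}, \ref{thm:MDProbasGeneral} and \ref{thm:MDPScalarGeneral} simultaneously: the cumulant bound of Proposition \ref{prop:CumulantEstimate} is combined with the variance lower bound \eqref{eq:VarLowerBound} to verify the Statulevi\v{c}ius condition $|c^k(X_\l)|\le (k!)^{1+\gamma}\Delta_\l^{-(k-2)}$ with $\gamma=d+w[\xi]+3$ and $\Delta_\l=c\,\l^{(d-1)/(2(d+1))}$, and then Lemma \ref{lem:Litauer}(iii), which is Theorem 1.1 of D\"oring--Eichelsbacher \cite{DoeringEichelsbacher}, delivers the MDP directly. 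In other words, the paper does not pass through Theorem \ref{thm:MDProbasGeneral} at all; both theorems are parallel consequences of the same cumulant estimate.

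Your approach instead treats Theorem \ref{thm:MDProbasGeneral} as a black box and recovers the MDP from the half-line tail asymptotics by hand. This is more elementary in the sense that it avoids citing \cite{DoeringEichelsbacher}, replacing that reference with a short and transparent argument (Mills' ratio plus a union bound for the upper bound and a difference of half-line probabilities for the lower bound). The trade-off is a slight redundancy: Theorem \ref{thm:MDProbasGeneral} itself already rests on the cumulant bound via Lemma \ref{lem:Litauer}(ii), so you are effectively re-deriving part of what \cite{DoeringEichelsbacher} packages. One minor remark: for the lower bound when $0\in G$, you invoke the central limit theorem, but Chebyshev's inequality (using $\var Y_\l=1$ and $a_\l\to\infty$) already gives $Y_\l/a_\l\to 0$ in probability, which suffices.
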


Our final aim is to lift the result of Theorem \ref{thm:MDPScalarGeneral} to a moderate deviation principle on the level of measures (a so-called level-2 MDP). To do so, we first need to introduce the necessary topological notions. The weak topology on $\cM(\SSd)$ is generated by the sets $U_{f,x,\delta}:=\{\nu\in\cM(\SSd):|\lan f,\nu\ran-x|<\epsilon\}$ with $x\in\RR$, $\epsilon>0$ and $f\in\cC(\SSd)$, see \cite[Chapter 6.2]{DemboZeitouni}. It is also known from \cite{DemboZeitouni} that $\cM(\SSd)$ supplied with the weak topology is a locally convex, Hausdorff topological vector space whose topological dual is identified with the collection of linear functionals $\nu\mapsto\lan f,\nu\ran$, $f\in\cC(\SSd)$.

To present our result, we recall from Theorem 7.1 in \cite{CalkaSchreiberYukich} that for all $\xi\in\Xi$ there exists a constant $\sigma_\infty^\xi\in(0,\infty)$ such that
$$
\lim_{\l\to\infty}\l^{-{d-1\over d+1}}\,\var\lan f,\l^{e[\xi]}\mu_\l^\xi\ran=(\sigma_\infty^\xi)^2\,\lan f^2,\cH_{\SSd}^{d-1}\ran
$$
if $f\in\cC(\BBd)$ (the strict positivity of $\sigma_\infty^\xi$ follows from \cite[Corollary 7.1]{CalkaSchreiberYukich} and from \cite[Lemma 8]{ReitznerClt05}).

\begin{theorem}\label{thm:MDPMeasureGeneral}
Let $\xi\in\Xi$ and let $(a_\l)_{\l>0}$ be such that \eqref{eq:GrowthMDP} is satisfied. Then the family $\Big({1\over a_\l}{\l^{e[\xi]}\bar\mu_\l^\xi\over \sigma_\infty^\xi\,\l^{{(d-1)/(2(d+1))}}}\Big)_{\l>0}$ satisfies a moderate deviation principle on $\cM(\SSd)$, supplied with the weak-topology, with speed $a_\l^2$ and rate function
$$
I(\nu) = \begin{cases} {1\over 2}\lan\varrho^2,\cH_{\SSd}^{d-1}\ran &: \nu\ll \cH_{\SSd}^{d-1}\text{ with density }\varrho={\dint\nu\over\dint\cH_{\SSd}^{d-1}}\\ \infty &: \text{otherwise}\,.
\end{cases}
$$
\end{theorem}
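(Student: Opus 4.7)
The plan is to derive this level-2 MDP from the scalar MDP of Theorem \ref{thm:MDPScalarGeneral} via a standard three-step projective-limit argument: establish MDPs for one-dimensional linear projections, extend by a Cramér--Wold type argument to joint finite-dimensional MDPs, and lift to $\cM(\SSd)$ via the Dawson--Gärtner theorem. Throughout, write $Y_\l := \l^{e[\xi]}\bar\mu_\l^\xi / (\sigma_\infty^\xi\, \l^{(d-1)/(2(d+1))})$ and regard each $f\in\cC(\SSd)$ as a function on $\BBd$ via the radial extension $\tilde f(x):=f(x/\|x\|)$; since the mass of $\mu_\l^\xi$ is asymptotically concentrated near $\SSd$, the corresponding pairings $\lan f,Y_\l\ran$ are well defined.

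\textbf{Step 1 (scalar MDP).} For $f\in\cC(\SSd)$ with $\lan f^2,\cH_{\SSd}^{d-1}\ran\neq 0$, apply Theorem~\ref{thm:MDPScalarGeneral} to $\tilde f$ and use the variance asymptotic stated just before Theorem~\ref{thm:MDPMeasureGeneral}, namely $\sigma_\l^\xi[\tilde f]\sim \sigma_\infty^\xi\,\lan f^2,\cH_{\SSd}^{d-1}\ran^{1/2}\,\l^{(d-1)/(2(d+1))}$. This yields an MDP for $a_\l^{-1}\lan f,Y_\l\ran$ with speed $a_\l^2$ and rate $y\mapsto y^2/(2\lan f^2,\cH_{\SSd}^{d-1}\ran)$. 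The degenerate case $\lan f^2,\cH_{\SSd}^{d-1}\ran=0$ is handled directly via the concentration inequality of Theorem~\ref{thm:LDIGeneral}, which forces $\lan f,a_\l^{-1}Y_\l\ran\to 0$ at MDP speed.

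\textbf{Step 2 (finite-dimensional joint MDPs).} Fix $f_1,\dots,f_n\in\cC(\SSd)$. Applying Step 1 to arbitrary linear combinations $f_t:=\sum_i t_i f_i$ gives a one-parameter family of scalar MDPs indexed by $t\in\RR^n$. Combined with the exponential tightness supplied by Theorem~\ref{thm:LDIGeneral} (componentwise), this promotes to a joint MDP on $\RR^n$ for $(a_\l^{-1}\lan f_1,Y_\l\ran,\dots,a_\l^{-1}\lan f_n,Y_\l\ran)$ whose rate function is the convex conjugate
\[
I_n(y_1,\dots,y_n)=\sup_{t\in\RR^n}\Big(\textstyle\sum_i t_i y_i-\tfrac{1}{2}\lan(\sum_i t_i f_i)^2,\cH_{\SSd}^{d-1}\ran\Big).
\]

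\textbf{Step 3 (Dawson--Gärtner).} The weak topology on $\cM(\SSd)$ is the initial topology with respect to the family of evaluations $\{\nu\mapsto\lan f,\nu\ran : f\in\cC(\SSd)\}$, so $\cM(\SSd)$ is the projective limit of its finite-dimensional cylinder projections indexed by finite subsets of $\cC(\SSd)$. The Dawson--Gärtner theorem (cf.\ Theorem~4.6.1 in \cite{DemboZeitouni}), combined with Step 2, then produces an MDP on $\cM(\SSd)$ at speed $a_\l^2$ with good rate function
\[
I(\nu)=\sup_{n,\,f_1,\dots,f_n}I_n(\lan f_1,\nu\ran,\dots,\lan f_n,\nu\ran)=\sup_{f\in\cC(\SSd)}\Big(\lan f,\nu\ran-\tfrac{1}{2}\lan f^2,\cH_{\SSd}^{d-1}\ran\Big),
\]
where the second equality unravels the defining supremum via linearity in $t$.

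\textbf{Step 4 (identification of $I$).} A direct Legendre--transform computation on $L^2(\cH_{\SSd}^{d-1})$ identifies the last supremum: pointwise optimization gives $f\approx\varrho$, yielding the announced value $\tfrac{1}{2}\lan\varrho^2,\cH_{\SSd}^{d-1}\ran$ when $\nu=\varrho\cdot\cH_{\SSd}^{d-1}$ with $\varrho\in L^2(\cH_{\SSd}^{d-1})$, and $+\infty$ otherwise (testing against appropriate singular or $L^2$-unbounded sequences of continuous functions).

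The main obstacle is Step 2: turning a whole family of scalar MDPs into a genuine \emph{joint} MDP is not automatic from coordinate-wise statements. Either one verifies the joint Gärtner--Ellis hypothesis by revisiting the cumulant bounds underlying the proof of Theorem~\ref{thm:MDPScalarGeneral} and checking that they extend verbatim to mixed cumulants of $(\lan f_1,Y_\l\ran,\dots,\lan f_n,Y_\l\ran)$ --- which is the route most in line with the rest of the paper --- or one invokes an abstract Cramér--Wold device for MDPs together with exponential tightness. Once Step 2 is in place, Steps 3 and 4 follow the well-established Sanov-type recipe.
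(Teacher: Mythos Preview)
Your proposal is correct and follows essentially the same route as the paper: the paper's proof simply points to the derivation of Theorem~1.5 from Theorem~1.4 in \cite{EichelsbacherSchreiberRaic}, which is precisely the projective-limit (Dawson--G\"artner) scheme you have spelled out, lifting the scalar MDP of Theorem~\ref{thm:MDPScalarGeneral} first to finite-dimensional marginals and then to $\cM(\SSd)$. One minor remark on your Step~1: the degenerate case $\lan f^2,\cH_{\SSd}^{d-1}\ran=0$ for $f\in\cC(\SSd)$ forces $f\equiv 0$, so no separate argument via Theorem~\ref{thm:LDIGeneral} is needed there; and for Step~2, note that the cumulant bound of Proposition~\ref{prop:CumulantEstimate} already applies to any bounded $f$, hence in particular to every linear combination $\sum_i t_i f_i$, so the scalar machinery feeds directly into the joint MDP without extra work on mixed cumulants.
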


In Theorem \ref{thm:MDPScalarGeneral} and Theorem \ref{thm:MDPMeasureGeneral} we have seen \textit{partial} MDPs, covering a part of the regime of scalings between the central limit theorem and the law of large numbers; the \textit{full} range would correspond to all scalings $a_\l$ with $a_\l\to\infty$ and $a_\l\l^{-(d-1)/(2(d+1))}\to 0$, as $\l\to\infty$. However, following the discussion in \cite{EichelsbacherSchreiberRaic}, we may argue that there are examples of weakly dependent spatial random systems known in the literature that satisfy a MDP with a Gaussian rate function only up to some critical regime of rescalings beyond that of the central limit theorem. For this reason, it might well be the case that for at least some of the key geometric functionals of the random polytopes we consider there is no full-range Gaussian MDP. We also refer to Remark \ref{rem:Optimality} below.

\medskip

It is a natural question whether our results presented above continue to hold for underlying convex bodies other than $\BBd$. The paper \cite{CalkaSchreiberYukich} establishes variance asymptotics and central limit theorems for the aforementioned key geometric functionals of $\Pi_\l$. In a later paper \cite{CalkaYukich1} the authors show that for some of these functionals the variance asymptotics and central limit theorems can be transferred to the situation in which the unit ball is replaced by a convex body with sufficiently smooth boundary. The proof is involved and highly technical. We expect that also some of our results could -- with presumably much effort -- be transferred using the methods established in \cite{CalkaYukich1}. However, to keep the length of the paper within bounds, we have decided to restrict to the representative case of the unit ball, which is also needed in the next section.

\begin{remark}\label{rem:OtherResults}\rm 
Besides of the results presented above, our method can be used to deduce further information about the random polytopes $\Pi_\l$.
\begin{itemize}
\item[(i)] (Central limit theorems) The cumulant bound presented in Proposition \ref{prop:CumulantEstimate} below together with Corollary 2.1 in \cite{SaulisBuch} yields the following Berry-Esseen estimate. For $\xi\in\Xi$, $f\in\cC(\BBd)$ with $\lan f^2,\cH_{\SSd}^{d-1}\ran\neq 0$ and a standard Gaussian random variable $Z\sim\cN(0,1)$ one has that
$$
\sup_{y\in\RR}\Bigg|\PP\Bigg({\lan f,\l^{e[\xi]}\bar\mu_\l^\xi\ran\over\sigma_\l^\xi[f]}\leq y\Bigg)-\PP(Z\leq y)\Bigg|\leq c\,\l^{-{d-1\over 2(d+1)(2(d+w[\xi])+7)}}
$$
with a constant $c\in(0,\infty)$ only depending on $\xi$, on $d$ and on $f$. In particular, as $\l\to\infty$, the random variables $(\sigma_\l^\xi[f])^{-1}\lan f,\l^{e[\xi]}\bar\mu_\l^\xi\ran$ satisfy a central limit theorem. However, the rate of convergence we get is weaker than that obtained in \cite{CalkaSchreiberYukich,ReitznerClt05} using Stein's method.

\item[(ii)] (Higher moments and cumulants) Proposition \ref{prop:CumulantEstimate} directly yields that for $\xi\in\Xi$ and $f\in\cB(\BBd)$ the $k$th-order cumulant of $\lan f,\l^{e[\xi]}\bar\mu_\l^\xi\ran$ is upper bounded by a constant multiple of $\l^{{d-1\over d+1}}$. In contrast, one has that the $k$th moment of $\lan f,\l^{e[\xi]}\mu_\l^\xi\ran$ satisfies $\EE(\lan f,\l^{e[\xi]}\mu_\l^\xi\ran)^k\leq c\l^{{k\over 2}{d-1\over d+1}}$ with a constant $c\in(0,\infty)$ only depending on $\xi$, $f$ and $d$. The proof is the same as that in \cite{VuConcentration}, where a similar behaviour of the moments has been observed for the missed volume and the vertex number of $\Pi_\l$.

\item[(iii)] (Multivariate extensions) Consider a random vector of the form
$$
\bigg({\lan f_1,\l^{e[\xi]}\bar\mu_\l^\xi\ran\over\sigma_\l^\xi[f_1]},\ldots,{\lan f_n,\l^{e[\xi]}\bar\mu_\l^\xi\ran\over\sigma_\l^\xi[f_n]}\bigg)\,,\qquad \xi\in\Xi\,,n\in\NN\,,f_1,\ldots,f_n\in\cC(\BBd)\,.
$$
It is possible to derive a multivariate MDP for the sequence of these random vectors similarly as in \cite{BaryshnikovEichelsbacherEtAl}, but we will not develop this point here.
\end{itemize}
\end{remark}

\begin{remark}\label{rem:Optimality}\rm 
We do not claim that our results are optimal. To improve them using our methods, one would have to optimize the exponent $d+w[\xi]+4$ at $k!$ appearing in Proposition \ref{prop:CumulantEstimate} below. However, for us it is not clear, which (optimal) exponent should be expected, even not in special cases. It is also not clear whether the exponent can be chosen independenly of the space dimension $d$.
\end{remark}

\section{Applications to Poisson polyhedra}\label{sec:Hyperplanes}

We are now going to apply the results obtained in the previous section to a class of Poisson polyhedra that arise as cells of a Poisson hyperplane mosaic. To define them, fix a parameter $\a\geq 1$ and let $\nu_\a$ be the measure on $\RR^d$ that is given by the relation
$$
\int_{\RR^d}\nu_\a(\dint x)\,f(x) = \int_{\RR^d\setminus\BB^d}\dint x\,\|x\|^{\a-d}\,f(x)\,,
$$
where $f\in\cB(\RR^d)$ is non-negative. Now, let $\zeta_\l$ be a Poisson point process on $\RR^d$ with intensity measure $\lambda\nu_\a$ and notice that $\zeta_\l(\BBd)=0$ with probability one. We associate with $\zeta_\l$ a family $\zeta_\l^H$ of random hyperplanes in $\RR^d$ as follows. For $x\in\zeta_\l$ let $H_x$ be the hyperplane with unit normal vector $x/\|x\|$ and distance $\|x\|/2$ to the origin. By the mapping properties of Poisson point processes, $\zeta_\l^H$ is a Poisson point process on the space of hyperplanes in $\RR^d$. The random hyperplanes of $\zeta_\l^H$ dissect the space into random polyhedra and the principal object of our investigations is the almost surely bounded random polyhedron $Z_\l=Z_\l(\alpha)$ which contains the origin, i.e.,
$$
Z_\l := \bigcap_{x\in\zeta_\l}H_x^+\,,
$$
where $H_x^+$ denotes the half-space bounded by $H_x$ that contains the origin. This parametric family of random polyhedra has attracted considerable interest in recent years because of its connections to high-dimensional convex geometry and to a version of the famous problem of D.G.\ Kendall asking for the asymptotic geometry of `large' mosaic cells, see \cite{CalkaSurvey,CalkaSchreiberPV,HH,HHRT,HugSchneider,SW}. It includes the following special case that has received particular attention and is well known in the literature, cf.\ \cite{SW}. It is concerned with a stationary Poisson-Voronoi mosaic. To define it, let $\eta$ be a stationary Poisson point process in $\RR^d$ with unit intensity. For each $x\in\eta$ we define the Voronoi cell
$$
v(x,\eta) := \{z\in\RR^d:\|z-x\|\leq \|z-x'\|\text{ for all }x'\in\eta\}
$$
as the set of all points in $\RR^d$ that are closer to $x$ than to any other point of $\eta$. The collection of all Voronoi cells forms the Poisson-Voronoi mosaic. Its typical cell can intuitively be understood as randomly chosen (and then shifted to the origin) from the set of all Voronoi cells, where each cell has the same chance of being selected, independently of size and shape. As a consequence of Slivnyak's theorem for Poisson point processes, it can be defined as
$$
Z^{\rm PV} := \{z\in\RR^d:\|z\|\leq\|z-x\|\text{ for all }x\in\eta\}\,,
$$
i.e., as the Voronoi cell of the origin, see \cite{SW}. By the inradius $R_{\rm in}(Z^{\rm PV})$ of $Z^{\rm PV}$ we understand the radius of the largest ball centred at the origin that is contained in $Z^{\rm PV}$ and we denote by $Z_r^{\rm PV}$ the typical Poisson-Voronoi cell conditioned on the event that $R_{\rm in}(Z^{\rm PV})\geq r$ for some $r\geq 1$, rescaled by a factor $r^{-1}$. It is remarkable that its distribution coincides with that of the random polyhedron $Z_\l$ under the particular choice $\a=d$ and $\lambda=(2r)^d$, cf.\ \cite{CalkaSurvey,CalkaSchreiberPV}. It is known from these works that
\begin{equation}\label{eq:MkVk}
\EE f_j(Z_r^{\rm PV}) \sim m_j\,r^{d(d-1)\over d+1}\qquad\text{and}\qquad \var f_j(Z_r^{\rm PV}) \sim v_j^2\,r^{d(d-1)\over d+1}\,,\qquad j\in\{0,\ldots,d-1\}\,,
\end{equation}
with constants $m_j,v_j\in(0,\infty)$ depending only on $d$ and on $j$, where we write $f(r)\sim g(r)$ for two functions $f,g:\RR\to\RR$ if $f(r)/g(r)\to 1$, as $r\to\infty$. These relations describe the first- and second-order asymptotic combinatorial complexity of typical Poisson-Voronoi cells with large inradius. Furthermore, asymptotic normality of $f_j(Z_r^{\rm PV})$ has also been obtained in \cite{CalkaSurvey,CalkaSchreiberPV}. (The results in these papers are formulated only for the case $j=d-1$ and in \cite{CalkaSchreiberPV} even for $d=2$, but the extension to arbitrary $j\in\{0,\ldots,d-1\}$ and $d$ is straight forward.)

We are also interested in the combinatorial structure of the random polyhedra $Z_\l$ and use the duality between $Z_\l$ and the random polytopes $\Pi_\l$ developed in \cite{CalkaSurvey,CalkaSchreiberPV} to derive a concentration inequality, explicit bounds for the relative error in the central limit theorem as well as a moderate deviation principle for $f_j(Z_\l)$. This adds to the various known contributions around Kendall's problem, see \cite{CalkaSurvey,CalkaSchreiberPV,HugSchneider} and the references cited therein. Since the results we obtain are formally the same as in Section \ref{sec:MainResults} with $\lan f,\l^{e[\xi]}\bar\mu_\l^\xi\ran$ there replaced by $f_j(Z_\l)$, we state (and prove) them here for particularly attractive Poisson-Voronoi case $\alpha=d$ and $\l=(2r)^d$ only.

\begin{figure}[t]
\begin{center}
\includegraphics[width=0.75\columnwidth]{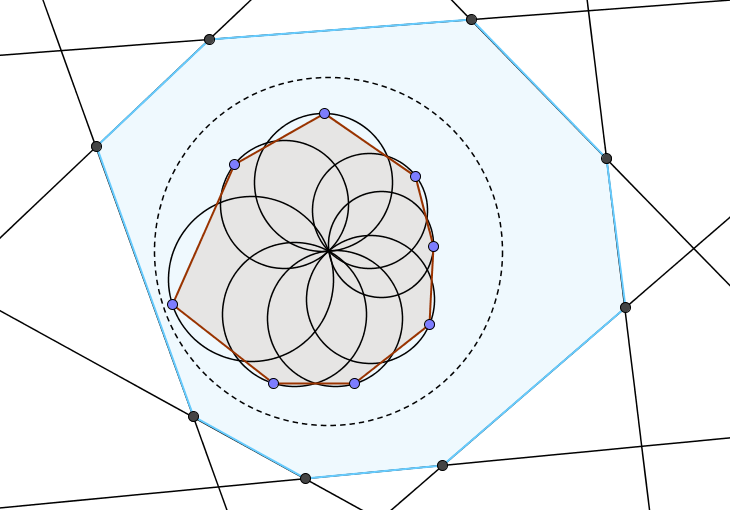}
\end{center}
\caption{The rescaled zero cell $Z_r^{\rm PV}$ (blue), the random polytope $\Pi_r$ (red) together with its associated Voronoi-flower. The unit sphere $\SSd$ is indicated by the dashed curve.}
\label{fig:hyp}
\end{figure}

\begin{theorem}\label{thm:HyperplanesGeneral}
Let $j\in\{0,\ldots,d-1\}$.
\begin{itemize}
\item[(i)] There are constants $c_1,c_2\in(0,\infty)$ only depending on $d$ and $j$, such that, for $r\geq \max\{c_1,1\}$,
$$
\PP\big(\big|r^{-{d(d-1)\over d+1}}f_j(Z_r^{\rm PV})-m_j\big|\geq y\,v_j\big)\leq 2\,\exp\Big(-{1\over 4}\min\Big\{{y^2\over 2^{d+j+4}},c_2r^{d(d-1)\over 2(d+1)(d+j+4)}y^{1\over d+j+4}\Big\}\Big)
$$
for all $y\geq 0$.
\item[(ii)] For $0\leq y\leq c_3 r^{d(d-1)\over 2(d+1)(2(d+j)+7)}$ and $r\geq\max\{c_4,1\}$ one has that
\begin{align*}
\Bigg|\log{\PP\big(r^{-{d(d-1)\over d+1}}f_j(Z_r^{\rm PV})-m_j\geq y\,v_j\big)\over 1-\Phi({y})}\Bigg| &\leq c_5\,(1+y^3)\,r^{-{d(d-1)\over 2(d+1)(2(d+j)+7)}}\quad\text{and}\\
\Bigg|\log{\PP\big(r^{-{d(d-1)\over d+1}}f_j(Z_r^{\rm PV})-m_j\leq -y\,v_j\,\big)\over \Phi(-{y})}\Bigg| &\leq c_5\,(1+y^3)\,r^{-{d(d-1)\over 2(d+1)(2(d+j)+7)}}
\end{align*}
with constants $c_3,c_4,c_5\in(0,\infty)$ only depending on $d$ and on $j$.
\item[(iii)] Suppose that $(a_r)_{r\geq 1}$ satisfies
$$
\lim_{r\to\infty}a_r=\infty\qquad\text{and}\qquad\lim_{r\to\infty}a_r\,r^{-{d(d-1)\over 2(d+1)(2(d+j)+7)}}\,.
$$
Then
$$
\bigg({1\over a_r}{f_j(Z_r^{\rm PV})-m_jr^{d(d-1)\over d+1}\over v_j\,r^{d(d-1)/(2(d+1))}}\bigg)_{r\geq 1}
$$
satisfies a moderate deviation principle on $\RR$ with speed $a_r^2$ and rate function $I(y)={y^2\over 2}$.
\end{itemize}
\end{theorem}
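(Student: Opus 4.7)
The plan is to reduce all three parts to Theorems \ref{thm:LDIGeneral}, \ref{thm:MDProbasGeneral} and \ref{thm:MDPScalarGeneral}, via the Calka--Schreiber duality \cite{CalkaSurvey,CalkaSchreiberPV} between the zero cell $Z_r^{\rm PV}$ and the Poisson polytope $\Pi_\l$ with $\l = (2r)^d$. The inversion $x \mapsto x/\|x\|^2$ sends each hyperplane $H_x$ defining $Z_r^{\rm PV}$ to a sphere through the origin whose opposite pole is $2x/\|x\|^2$, so the hyperplane system gets transported to the Voronoi-flower of an auxiliary Poisson process in $\BBd$ whose convex hull is (in distribution) $\Pi_r$; see Figure \ref{fig:hyp}. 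As inversion preserves the face incidence lattice, this yields the distributional identity $f_j(Z_r^{\rm PV}) \stackrel{d}{=} \lan 1, \mu_\l^{\xi_{f_j}}\ran$ with $\xi_{f_j}$ as in (\ref{eq:DefXi_fk}); in particular $w[\xi_{f_j}] = j$ and $e[\xi_{f_j}] = 0$ by (\ref{eq:DEF}).

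It then remains to match normalisations. Since $\l^{(d-1)/(d+1)} = 2^{d(d-1)/(d+1)} r^{d(d-1)/(d+1)}$, the asymptotics in (\ref{eq:MkVk}) are equivalent, up to multiplicative constants independent of $r$, to the mean and variance scalings of $\lan 1, \mu_\l^{\xi_{f_j}}\ran$ known from \cite{CalkaSchreiberYukich,ReitznerClt05}. Thus the normalisation factor $\sigma_\l^{\xi_{f_j}}[1]$ and $v_j\,r^{d(d-1)/(2(d+1))}$ (as well as the centring constants $\EE\lan 1, \mu_\l^{\xi_{f_j}}\ran$ and $m_j\,r^{d(d-1)/(d+1)}$) differ by factors that converge to $1$ as $r\to\infty$ and are bounded away from $0$ and $\infty$ for $r$ large enough. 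The specific asymptotic constants $m_j$ and $v_j$ may thereby be absorbed into $c_1,\dots,c_5$.

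Part (i) now follows by applying Theorem \ref{thm:LDIGeneral} with $f\equiv 1$ and $\xi = \xi_{f_j}$: the exponent $d + w[\xi_{f_j}] + 4 = d + j + 4$ appears, and the factor $\l^{(d-1)/(2(d+1)(d+j+4))} = 2^{d(d-1)/(2(d+1)(d+j+4))}\,r^{d(d-1)/(2(d+1)(d+j+4))}$ produces exactly the $r$-dependence in the bound. Parts (ii) and (iii) are obtained in the same way from Theorem \ref{thm:MDProbasGeneral} and Theorem \ref{thm:MDPScalarGeneral} respectively; the growth condition (\ref{eq:GrowthMDP}) on $(a_\l)$ becomes verbatim the condition on $(a_r)$ stated in (iii) once $\l = (2r)^d$ is substituted.

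The main obstacle is the clean execution of the duality step. One must verify that the push-forward under inversion of the Poisson process $\zeta_\l$ on $\RR^d\setminus\BBd$, with intensity measure $\l\nu_\a$ for $\a=d$ and $\l=(2r)^d$, coincides in distribution (after an appropriate deterministic rescaling) with the Poisson input defining $\Pi_r$, and that the resulting bijection of faces sends the $j$-dimensional flat faces of $Z_r^{\rm PV}$ precisely to the objects counted by $\xi_{f_j}$, so that the weight $w[\xi_{f_j}]=j$ is consistent with the exponent $d+j+4$ appearing in the final estimates. Once this correspondence is in place, all three conclusions reduce to a direct substitution into the master theorems of Section \ref{sec:MainResults}, and no additional cumulant or stabilisation analysis is required.
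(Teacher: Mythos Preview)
Your overall strategy --- reduce to the Poisson polytope theorems via the Calka--Schreiber inversion duality --- is exactly the paper's route. However, there is a concrete error in the duality step and one omission.

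\textbf{The face correspondence is inverted.} The inversion $x\mapsto x/\|x\|^2$ realises a \emph{polar} duality between $Z_r^{\rm PV}$ and $\Pi_r$: a $j$-face of $Z_r^{\rm PV}$ arises as the intersection of $d-j$ bounding hyperplanes, and under $\cI$ these correspond to $d-j$ extreme points of $\Pi_r$ whose convex hull is a $(d-j-1)$-face. Thus the correct identity is
\[
f_j(Z_r^{\rm PV}) \;=\; f_{d-1-j}(\Pi_r)\qquad\text{almost surely,}
\]
and the relevant functional is $\xi_{f_{d-1-j}}$, not $\xi_{f_j}$. Your claim that ``inversion preserves the face incidence lattice'' is true only in the sense that it \emph{reverses} it. With the correct functional one has $w[\xi_{f_{d-1-j}}]=d-1-j$, so the exponent produced by Theorem~\ref{thm:LDIGeneral} is $d+(d-1-j)+4=2d-j+3$ rather than $d+j+4$; the paper's proof explicitly invokes $\xi=\xi_{f_{d-j-1}}$, and the exponent $d+j+4$ in the displayed statement appears to be a typographical slip there.

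\textbf{The intensity measure is not Lebesgue.} The push-forward of $\zeta_\l$ under $\cI$ is a Poisson process on $\BBd$ with intensity measure $(2r)^d\tilde\nu$, where $\tilde\nu(\dint x)=\|x\|^{-2d}\dint x$, not the homogeneous process $\eta_\l$ to which Theorems~\ref{thm:LDIGeneral}--\ref{thm:MDPScalarGeneral} are stated. The paper handles this by observing that $\|x\|^{-2d}\to 1$ as $\|x\|\to 1$ and that, for large $r$, the boundary of $\Pi_r$ lives in a thin annulus near $\SSd$, so the cumulant machinery of Section~\ref{sec:proofs} goes through unchanged. You should at least flag this point; asserting that the push-forward ``coincides in distribution \ldots\ with the Poisson input defining $\Pi_r$'' is not literally true.
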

\begin{proof}
Consider the inversion
$$
\cI(x) := {x\over\|x\|^2}\,,\qquad x\in\RR^d\setminus\{0\}\,,
$$
and observe that the image of $\RR^d\setminus Z_r^{\rm PV}$ under $\cI$ coincides with the Voronoi-flower ${\rm VF}(\Pi_r)$ of the random polytope $\Pi_r$ in $\BB^d$ that is generated by a Poisson point process in $\BBd$ with intensity measure $(2r)^d\tilde{\nu}$. Here, the measure $\tilde{\nu}$ on $\BBd$ is given by
$$
\int_{\BBd}\tilde{\nu}(\dint x)\,f(x) = \int_{\BB^d}\dint x\,\|x\|^{-2d}\,f(x)\,,\qquad f\in\cB(\BBd)\,,f\geq 0\,.
$$
We notice now that our results presented in Section \ref{sec:MainResults} remain valid if the stationary Poisson point process there is replaced by a Poisson point processes $\eta_r$ with intensity measure $(2r)^d\tilde{\nu}$. The reason for this is that for points $x\in\BBd$ that are close to the boundary $\SSd$ of $\BBd$, i.e., for which $\|x\|$ is close to $1$, $(2r)^d\tilde{\nu}(\dint x)$ is close to $(2r)^d\dint x$, and that for sufficiently large $r$ the boundary of $\Pi_r$ is concentrated in a small annulus around $\SSd$ with overwhelming probability (we omit the formal check and refer to \cite{CalkaSurvey,CalkaSchreiberPV}).

The inversion $\cI$ induces for each $j\in\{0,\ldots,d-1\}$ a one-to-one correspondence between the sets $\cF_j(Z_r^{\rm PV})$ and $\cF_{d-j-1}(\Pi_r)$. Namely, an element of $\cF_j(Z_r^{\rm PV})$ arises almost surely as intersection of $d-j$ hyperplanes from $\zeta_{(2r)^d}^H$. They are mapped under $\cI$ to $d-j$ balls
$$
\BB^d\Big({x_1\over 2},{\|x_1\|\over 2}\Big)\,,\ldots,\BB^d\Big({x_{d-j}\over 2},{\|x_{d-j}\|\over 2}\Big)\qquad\text{with}\qquad x_1,\ldots,x_j\in\ext(\eta_r)\,.
$$
Denoting by $\partial{\rm VF}(\Pi_r)$ the boundary of the Voronoi-flower associated with $\Pi_r$ we have that
$$
\BB^d\Big({x_1\over 2},{\|x_1\|\over 2}\Big)\cap\ldots\cap\BB^d\Big({x_{d-j}\over 2},{\|x_{d-j}\|\over 2}\Big)\cap \partial{\rm VF}(\Pi_r)\neq\emptyset
$$
if and only if $[x_1,\ldots,x_{d-j-1}]\in\cF_j(Z_r^{\rm PV})$, see Figure \ref{fig:hyp}. This implies that $f_j(Z_r^{\rm PV})=f_{d-j-1}(\Pi_r)$ almost surely and we are in the position to apply Theorem \ref{thm:LDIGeneral}, Theorem \ref{thm:MDProbasGeneral} and Theorem \ref{thm:MDPScalarGeneral} with $\l=(2r)^d$, $f\equiv 1$ and $\xi=\xi_{f_{d-j-1}}$ there in combination with \eqref{eq:MkVk} to deduce the result.
\end{proof}

\begin{remark}\rm 
Using the connection between the missed volume $V_d(Z_r^{\rm PV}\setminus\BB^d(0,r))$ of the conditional typical Poisson-Voronoi cell and the first intrinsic volume difference $V_1(\BBd)-V_1(\Pi_r)$ from \cite{Schreiber2003} we can also derive a concentration inequality, error bounds in the central limit theorem and a moderate deviation principle for $V_d(Z_r^{\rm PV}\setminus\BB^d(0,r))$. This adds to the expectation and variance asymptotics, to the central limit theorem as well as to the moderate-deviation-type results proved in \cite{CalkaSchreiberPV} for the special case $d=2$.
\end{remark}

\begin{remark}\rm 
The random polyhedra $Z_\l(\a)$ contain another interesting special case. Namely, if $\a=1$ and $\lambda=r$, then $rZ_\l$ has the same distribution as the zero cell (i.e., the almost surely uniquely determined cell that contained the origin in its interior) of a stationary and isotropic Poisson hyperplane mosaic conditioned on having inradius $\geq r$, see \cite{SW}. Also in this situation, a result similar to Theorem \ref{thm:HyperplanesGeneral} is available; we leave the details to the reader.
\end{remark}

\section{Proofs of the main results for Poisson polytopes}\label{sec:proofs}

Within this section all constants $\l_0,c,c_1,c_2,\ldots$ are strictly positive, finite and such that they only depend on the space dimension $d$ and the key geometric functional $\xi$ we consider, unless otherwise specified.

\subsection{Preparations}

Fix a key geometric functional $\xi\in\Xi$ associated with the random polytopes $\Pi_\l$ and let $f\in\cB(\BBd)$. We define the sequence $(M_\l^k)_{k\geq 1}$ of moment measures of the rescaled empirical measure $\l^{e[\xi]}\mu_\l^\xi$ as at \eqref{eq:DefMu} by the relation
$$
\EE\exp(\lan f,\l^{e[\xi]}\mu_\l^\xi\ran) = 1+\sum_{k=1}^\infty{1\over k!}\lan f^{\s k},M_\l^k\ran\,,
$$
where the $k$-fold tensor product $f^{\s k}\in\cB((\BBd)^k)$ is given by $f^{\s k}(x_1,\ldots,x_k)=f(x_1)\cdots f(x_k)$. Note that $M_\l^k$ is a measure on the product space $(\BBd)^k$. Although this is not visible in our notation, we emphasize that $M_\l^k$ depends on $\xi$, but we suppress this dependency and consider $\xi$ as fixed. Now, we describe the density of these moment measures. In order to do this, we introduce for $\l>0$ the singular differential $\bar\dint_\l$ by the relation
$$
\int_{(\BBd)^k}\bar\dint_\l(x_1,\ldots,x_k) \,g(x_1,\ldots,x_k)=\l \int_{\BBd}\dint x\,g(x,\ldots,x)\,,\qquad g\in\cB((\BBd)^k)\,.
$$
Furthermore, for ${\bf x}=(x_1,\ldots,x_k)\in(\BBd)^k$ we formally put
$$
\tilde\dint_\l\bx := \sum_{L_1,\ldots,L_p\preceq\setk}\bar\dint_\l \bx_{L_1}\ldots\bar\dint_\l \bx_{L_p}\,,
$$
where $L_1,\ldots,L_p\preceq\setk$ indicates that the sum runs over all unordered partitions $\{L_1,\ldots,L_p\}$ of the set $\setk:=\{1,\ldots,k\}$ and $\bx_{L_i}:=(x_\ell:\ell\in L_i)$ for $i\in\{1,\ldots,p\}$. We can then conclude by a repeated application of the Mecke equation \eqref{eq:Mecke} that the moment measure $M_\l^k(\dint\bx)$ is absolutely continuous with respect to $\tilde\dint_\l\bx$ and has density $m_\l(\bx)$ given by
\begin{align*}
m_\l(\bx)=m_\l(x_1,\ldots,x_k) &= \EE\Big[\prod_{i=1}^k \l^{e[\xi]}\xi(x_i,\eta_\l\cup\{x_1,\ldots,x_k\})\Big]\\
&=\EE\Big[\prod_{i=1}^k \l^{e[\xi]}\xi^{(\l)}(\sT_\l(x_i),\eta_\l^\sT\cup\{\sT_\l(x_1),\ldots,\sT_\l(x_k)\})\Big]
\end{align*}
with $\bx=(x_1,\ldots,x_k)\in(\BBd)^k$, see \cite[Proposition 3.1]{EichelsbacherSchreiberRaic}.
Note that finiteness of $m_\l$ is ensured by H\"older's inequality together with \eqref{eq:MomentsAllOrders}.

Closely related to the moment measures $M_\l^k$ are the so-called cumulant measures associated with $\l^{e[\xi]}\mu_\l^\xi$. The sequence $(c_\l^k)_{k\geq 1}$ of these cumulant measures is defined via the well-known relation between moments and cumulants as
\begin{equation}\label{eq:DefCumulantMeasure}
c_\l^k := \sum_{L_1,\ldots,L_p\preceq\setk}(-1)^{p-1}(p-1)!\,M_\l^{|L_1|}\otimes\cdots\otimes M_\l^{|L_p|}\,,
\end{equation}
where $\otimes$ denotes the operation that forms the product measure. Note that $c_\l^k$ is a signed measure on the product space $(\BBd)^k$, which depends on the choice of $\xi$. It is the sequence of cumulant measures rather than that of the moment measures of the empirical measure $\l^{e[\xi]}\mu_\l^\xi$ which plays a key role in our further investigations.

Following \cite{BaryshnikovYukich,EichelsbacherSchreiberRaic}, we finally define for non-empty disjoint sets $S,T\subseteq\NN$ the (semi-) cluster measure $U_\l^{S,T}$ on $(\BBd)^{|S\cup T|}$ by
$$
U_\l^{S,T} := M_\l^{|S\cup T|}-M_\l^{|S|}\otimes M_\l^{|T|}\,.
$$
These cluster measures appear in the following decomposition of $c_\l^k$. Namely, for a non-trivial partition $\{S,T\}\preceq\setk$ one has that
\begin{equation}\label{eq:DecomCumulant}
c_\l^k = \sum_{S',T',K_1,\ldots,K_m\preceq\setk}c(S',T',K_1,\ldots,K_m)\,U_\l^{S',T'}\otimes M_\l^{|K_1|}\otimes\cdots\otimes M_\l^{|K_m|}\,,
\end{equation}
where $\{S',T',K_1,\ldots,K_m\}$ is a partition of $\setk$ with $S'\subseteq S$ and $T'\subseteq T$. The numerical coefficients $c(S',T',K_1,\ldots,K_m)$ in \eqref{eq:DecomCumulant} are known to satisfy the estimate
\begin{equation}\label{eq:EstimateCoeffDecom}
\sum_{S',T',K_1,\ldots,K_m\preceq\setk}|c(S',T',K_1,\ldots,K_m)| \leq 2^k\,k!
\end{equation}
and this upper bound is best possible according to Corollary 3.1 and Lemma 3.2 in \cite{EichelsbacherSchreiberRaic}. The representation \eqref{eq:DecomCumulant} together with the estimate \eqref{eq:EstimateCoeffDecom} are the starting point of the proof of our main results. We emphasize that although the starting point of our proof is the same as for the results in \cite{BaryshnikovYukich} or \cite{EichelsbacherSchreiberRaic}, the further details differ significantly because of the different nature of the functionals we consider.
 
\subsection{Cumulant estimates}\label{sec:CumEstimate}

This section contains the most technical part of the proof of our main theorems. The key result is the following bound for the integrals of a test function with respect to the cumulant measures introduced in the previous section. Note that the continuity of the test functions is not needed in this part of the proof. It will enter later when Proposition \ref{prop:CumulantEstimate} is combined with the variance lower bound \eqref{eq:VarLowerBound}. Recall the definition \eqref{eq:DEF} of the individual weights $w[\xi]$ of the key geometric functionals $\xi\in\Xi$.

\begin{proposition}\label{prop:CumulantEstimate}
Let $\xi\in\Xi$, $f\in\cB(\BBd)$ and $k\in\{3,4,\ldots\}$. Then, for $\lambda\geq c_1$,
$$
|\lan f^{\s k},c_\l^k\ran|\leq c_2\,c_3^k\,\|f\|_\infty^k\,\l^{{d-1\over d+1}}\,(k!)^{d+w[\xi]+4}
$$
with constants $c_1,c_2,c_3\in(0,\infty)$ only depending on $\xi$ and on $d$.
\end{proposition}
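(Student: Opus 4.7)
The plan is to adapt the cumulant method of Baryshnikov--Yukich \cite{BaryshnikovYukich} and Eichelsbacher--Schreiber--Rai\v{c} \cite{EichelsbacherSchreiberRaic} to the boundary-concentrated setting of random polytopes in the unit ball, making essential use of the decomposition \eqref{eq:DecomCumulant}, the coefficient bound \eqref{eq:EstimateCoeffDecom}, the scaling transformation $\sT_\l$ of Section \ref{sec:RescaledFct}, and the localization estimates \eqref{eq:ExpStabilization} and \eqref{eq:ExtremExpDecay}. After pulling everything back to the rescaled picture via $\sT_\l$, each point $\sT_\l(x_i)=(v_i,h_i)$ has a tangential component $v_i$ ranging in $\l^{1/(d+1)}\BB^{d-1}(0,\pi)$ of volume $O(\l^{(d-1)/(d+1)})$ and a height $h_i\geq 0$ whose effective support is $O(1)$ by \eqref{eq:ExtremExpDecay}.

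The starting point is to pick a non-trivial partition $\{S,T\}$ of $\setk$ and write $\lan f^{\s k},c_\l^k\ran$ as the signed sum \eqref{eq:DecomCumulant}. I would then bound each term $\lan f^{\s k},\,U_\l^{S',T'}\otimes M_\l^{|K_1|}\otimes\cdots\otimes M_\l^{|K_m|}\ran$ uniformly. The cluster measure $U_\l^{S',T'}=M_\l^{|S'\cup T'|}-M_\l^{|S'|}\otimes M_\l^{|T'|}$ concentrates on configurations whose tangential components are mutually close: if the $S'$- and $T'$-clusters are separated in the tangential direction by more than the stabilization radius $R$, then by definition of $R$ the integrand factorizes and the contribution to $U_\l^{S',T'}$ vanishes. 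Using the super-exponential tail \eqref{eq:ExpStabilization} of $R$, this produces a single free tangential base point contributing $\l^{(d-1)/(d+1)}$, while the remaining $|S'\cup T'|-1$ points are $O(1)$-localized. The densities $m_\l(\bx)$ appearing in each $M_\l^{|K_i|}$-factor are then controlled via H\"older's inequality together with the uniform moment bound \eqref{eq:MomentsAllOrders}; for face functionals $\xi_{f_j}$ the value $\xi^{(\l)}(x,\cX)$ can grow like $\binom{|\cX|}{j}\leq |\cX|^j$, which is the combinatorial origin of the factor $(k!)^{w[\xi]}$, while for the volume-type functionals $\xi\in\{\xi_{V_1},\ldots,\xi_{V_d},\xi_{VF}\}$ the growth in cluster size is only quadratic, matching $w[\xi]=2$.

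Height integration against the extreme-point decay $h\mapsto h^k\exp(-c^{-1}h^{(d+1)/2})$ contributes an extra factor bounded by $(k!)^{2/(d+1)}$ via standard Gamma-function estimates, and the coefficient bound $\sum|c(\cdot)|\leq 2^k k!$ from \eqref{eq:EstimateCoeffDecom} together with the number of partitions of $\setk$ (itself at most $k!$) absorbs the remaining combinatorial factors, leaving precisely the exponent $d+w[\xi]+4$ in the final bound.

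The main obstacle is ensuring the single-power $\l^{(d-1)/(d+1)}$ scaling rather than the naive $\l^{(1+m)(d-1)/(d+1)}$ which a careless application of the triangle inequality to \eqref{eq:DecomCumulant} would produce. The resolution is to iterate \eqref{eq:DecomCumulant} inside each moment-measure factor $M_\l^{|K_i|}$, thereby re-expressing the cumulant as a sum over ``tree''-connected configurations built entirely from cluster measures $U_\l^{\cdot,\cdot}$; every such tree term then inherits the stabilization-based localization at each edge, so that only one base point remains free and all other tangential variables are localized on $O(1)$-scales. Making this iterated cluster expansion rigorous, and keeping track of how the resulting $k^{k-2}$-many tree contributions are absorbed into the final factorial exponent, is the technically delicate step of the proof.
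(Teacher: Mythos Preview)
Your overall strategy---the cluster decomposition \eqref{eq:DecomCumulant}, the coefficient bound \eqref{eq:EstimateCoeffDecom}, localization via the stabilization radius, height decay via extremality, and a Cayley spanning-tree count giving $k^{k-2}$---matches the paper's skeleton. But the mechanism you propose for collapsing everything to a single power of $\l^{(d-1)/(d+1)}$ is not the one the paper uses, and the actual mechanism is absent from your sketch.

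You suggest \emph{iterating} \eqref{eq:DecomCumulant} inside each moment factor $M_\l^{|K_i|}$ so as to re-express the cumulant as a sum over tree-connected configurations built entirely from cluster measures. This is not well-defined as stated: \eqref{eq:DecomCumulant} decomposes a cumulant measure, not a moment measure, so it cannot be applied directly to $M_\l^{|K_i|}$, and any recursion through the moment--cumulant relation does not obviously terminate with controllable combinatorics. More to the point, no iteration is needed. The paper's device is twofold. First, it partitions the off-diagonal of $(\BBd)^k$ into regions $\sigma(\{S,T\})$ on which the \emph{global} minimum separation $\d(\bx)=\min_{\{S',T'\}}d(v_{S'},v_{T'})$ is achieved precisely by the bipartition $\{S,T\}$; on each such region a \emph{single} application of \eqref{eq:DecomCumulant} with this $\{S,T\}$ already delivers decay in $d(v_S,v_T)=\d(\bx)$. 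Second---and this is the random-polytope-specific step you underuse---each remaining moment factor $m_\l(\bx_{K_i})$ is itself bounded by $c^{|K_i|}|K_i|(|K_i|!)^{w[\xi]}\exp(-c\max_{j\in K_i}h_j)$, because $\xi^{(\l)}(x,\cdot)$ vanishes unless $x\in\ext(\Psi^{(\l)})$ and one may condition on that event and invoke \eqref{eq:ExtremExpDecay2}. Summing exponents over the blocks yields $\exp(-c\max\{\d(\bx),h_1,\ldots,h_k\})$ on the full product, and only \emph{then} does the tree argument enter: a configuration with small $\d(\bx)$ admits no well-separated bipartition, hence has a spanning tree with short edges, localizing all tangential coordinates to one free base point. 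The trees are geometric, not combinatorial artefacts of an iterated cluster expansion.

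A secondary but genuine issue: you invoke the super-exponential tails \eqref{eq:ExpStabilization} and \eqref{eq:ExtremExpDecay}, whereas the paper deliberately downgrades to the simple-exponential versions \eqref{eq:ExpStabilization2} and \eqref{eq:ExtremExpDecay2}. As explained in Remark \ref{rem:Integral}, after the tree volume bound one must integrate $t^{(d-1)(p-1)}\exp(-ct^\alpha)$ and then iterate $p$ height integrations; for $\alpha=1$ this is done by elementary repeated integration-by-parts (Lemma \ref{lem1:Integral}) and the factorials can be tracked, but for $\alpha>1$ the incomplete-Gamma terms do not telescope. Your claimed height contribution of only $(k!)^{2/(d+1)}$ therefore does not follow; the paper instead pays $p^{2(p-1)}(dp)!$ from the coupled tangential--height integration (Lemma \ref{lem7:IntEstimate}), which after Stirling accounts for the exponent $d+2$ on top of the $w[\xi]+2$ coming from moments, the coefficient bound, and the partition sum---hence the final $d+w[\xi]+4$.
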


We divide the proof of Proposition \ref{prop:CumulantEstimate} into a couple of lemmas. To simplify the notation, for the remainder of this section we fix $k\in\{3,4,\ldots\}$. The next lemma will be used several times in what follows.

\begin{lemma}\label{lem1:Integral}
Let $a,b\in(0,\infty)$ and $d,p\in\NN$ with $d\geq 2$. Then
$$
\int_a^\infty \dint t\,t^{(d-1)(p-1)}\exp(-bt) = \sum_{i=1}^{(d-1)(p-1)+1}{((d-1)(p-1))_{(i-1)}\over b^i}\,\exp(-ab)\,a^{(d-1)(p-1)-(i-1)}\,,
$$
where for $n\in\NN$ and $i\in\{-1,\ldots,n\}$, $(n)_{(i)}:={n!\over(n-i)!}$ and $(n)_{(-1)}:=1$ stands for the $i$th falling factorial of $n$.
\end{lemma}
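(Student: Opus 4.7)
\textbf{Proof plan for Lemma \ref{lem1:Integral}.}

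The identity is a straightforward calculation: the integral on the left is, up to rescaling, an upper incomplete gamma function at an integer parameter, for which a finite closed-form expansion is standard. I would prove it by reduction to that expansion followed by a reindexing check.

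First, set $n := (d-1)(p-1)$, which is a non-negative integer. Apply the substitution $u = bt$, $\dint u = b\,\dint t$, to obtain
\begin{equation*}
\int_a^\infty t^{n}\exp(-bt)\,\dint t \;=\; \frac{1}{b^{n+1}}\int_{ab}^{\infty} u^{n}\exp(-u)\,\dint u \;=\; \frac{1}{b^{n+1}}\,\Gamma(n+1,ab),
\end{equation*}
where $\Gamma(\,\cdot\,,\,\cdot\,)$ denotes the upper incomplete gamma function. For integer $n$ the well-known identity
\begin{equation*}
\Gamma(n+1,x) \;=\; n!\,\exp(-x)\,\sum_{j=0}^{n}\frac{x^{j}}{j!}
\end{equation*}
holds, a fact that follows by a trivial induction on $n$ (integrating $u^{n}e^{-u}$ by parts once reduces $n$ to $n-1$ and produces the boundary term $x^{n}e^{-x}$; the base case $n=0$ is immediate).

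Second, I would substitute $x = ab$ in the preceding display and pull powers of $a$ and $b$ out of the sum, obtaining
\begin{equation*}
\int_a^\infty t^{n}\exp(-bt)\,\dint t \;=\; \exp(-ab)\sum_{j=0}^{n}\frac{n!}{j!}\,\frac{a^{j}}{b^{\,n+1-j}}.
\end{equation*}
Finally, I would reindex by $i := n-j+1$, so that $j = n+1-i$ and, as $j$ ranges over $\{0,1,\ldots,n\}$, the new index $i$ ranges over $\{1,2,\ldots,n+1\}$. Under this substitution $n!/j! = n!/(n-(i-1))! = (n)_{(i-1)}$, the factor $b^{n+1-j}$ becomes $b^{i}$, and $a^{j}$ becomes $a^{\,n-(i-1)}$. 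Substituting yields exactly the right-hand side of the claimed identity, so the proof is complete.

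There is no real obstacle here: the only non-trivial ingredient is the closed form for $\Gamma(n+1,x)$ at integer $n$, and the remainder is a careful reindexing to match the falling-factorial notation $(n)_{(i-1)}$ of the statement (with the convention $(n)_{(-1)}=1$ needed only to allow a uniform notation in degenerate cases; when $n=0$ the single term $i=1$ gives $\exp(-ab)/b$, consistent with the direct evaluation of $\int_a^\infty e^{-bt}\,\dint t$).
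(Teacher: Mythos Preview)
Your proof is correct and essentially matches the paper's approach: the paper simply states that the identity follows from repeated integration by parts and omits the details, and your route via the upper incomplete gamma function $\Gamma(n+1,x)=n!\,e^{-x}\sum_{j=0}^n x^j/j!$ is exactly that integration-by-parts computation packaged under a standard name, followed by a careful reindexing. There is nothing to add.
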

\begin{proof}
This can be shown by straight-forward repeated integration-by-parts. We omit the details.
\end{proof}

We need the following lemma that refines the moment condition \eqref{eq:MomentsAllOrders}.

\begin{lemma}\label{lem:MomentsAllOrders}
\begin{itemize}
\item[(i)] For $\l\geq c_{4}$ and $x\in\BBd$ one has that 
$$
\EE\big|\l^{e[\xi]}\xi^{(\l)}(\sT_\l(x),\eta_\l^\sT\cup\{\sT_\l(x)\})\big|^p \leq c_5\,c_{6}^p\,(p!)^{w[\xi]}
$$
for all integers $p\geq 1$.
\item[(ii)] For $\l\geq c_{7}$, integers $p\geq 1$ and $x_1'=(v_1,h_1),\ldots,x_p'=(v_p,h_p)\in\cR_\l$ one has that
$$
E_1:=\EE\Big(\prod_{i=1}^p\l^{e[\xi]}\xi^{(\l)}\Big(x_i',\eta_\l^\sT\cup\bigcup_{j=1}^p\{x_j'\}\Big)\Big)^2\leq c_8\,c_{9}^p\,(p!)^{2w[\xi]}
$$
and
$$
E_2:=\EE\Big(\prod_{i=1}^p \lambda^{e[\xi]} \xi^{(\lambda)}\Big(x_i',\Big(\eta_\l^\sT\cup\bigcup_{j=1}^p\{x_j'\}\Big)\cap {\rm Cyl}\Big(x_i',{\d\over 2}\Big)\Big)\Big)^{2}\leq c_{10}\,c_{11}^p\,(p!)^{2w[\xi]}\,,
$$
where $\d:=\min_{i,j=1,\ldots,p}\|v_i-v_j\|$.
\end{itemize}
\end{lemma}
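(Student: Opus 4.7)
The plan is to derive both parts from common pointwise bounds on $\lambda^{e[\xi]}\xi^{(\lambda)}(y,\cX)$ expressed in terms of the radius of localization $R=R(\xi,y,\lambda)$ and of the local Poisson count. For the volume-type functionals $\xi\in\{\xi_{V_d},\xi_{V_1},\ldots,\xi_{V_{d-1}},\xi_{VF}\}$, the region of integration has horizontal extent $\le R$ and vertical extent $\le C R^2$ (the latter from the paraboloid geometry of $\partial\Psi^{(\lambda)}$), which yields $\lambda\,\xi^{(\lambda)}(y,\cX)\le C R^{d+1}$. For the face functional $\xi=\xi_{f_j}$, every $j$-dimensional face incident to $y$ is determined by a choice of $j$ further extreme vertices lying in $\mathrm{Cyl}(y,R)$, so $\xi_{f_j}^{(\lambda)}(y,\cX)\le\binom{N-1}{j}\le N^{j}/j!$, where $N:=|\cX\cap\mathrm{Cyl}(y,R)|$. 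In both cases the quantity vanishes unless $y$ is an extreme point of $\Psi^{(\lambda)}$.

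For part (i) I raise these bounds to the $p$-th power and take expectation. In the volume case, the super-exponential decay \eqref{eq:ExpStabilization} together with the substitution $t=c_1^{-1}u^{d+1}$ in
$$
\EE R^{p(d+1)} = p(d+1)\int_0^\infty u^{p(d+1)-1}\,\PP(R\ge u)\,\dint u
$$
reduces the integral to a $\Gamma$-integral and gives $\EE R^{p(d+1)}\le c^{p+1}p!$, which even dominates the claim since $w[\xi]=2$ here. In the face case, I condition on $R$: the intensity of $\eta_\lambda^\sT$ together with the extremity decay \eqref{eq:ExtremExpDecay} bounds the conditional mean of $N$ by $CR^{d-1}$, and standard Poisson moment estimates yield $\EE[N^{pj}\mid R]\le C^{pj}(R^{pj(d-1)}+(pj)!)$. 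Integrating the tail of $R$ as before and using the combinatorial identity $(pj)!\le j^{pj}(p!)^{j}$ produce $\EE N^{pj}\le c^{p+1}(p!)^{j}$, so that after dividing by $(j!)^p$ the bound $\EE|\xi_{f_j}^{(\lambda)}|^p\le c_5\,c_6^p (p!)^{w[\xi_{f_j}]}$ follows.

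Part (ii) splits into the bounds for $E_2$ and $E_1$. For $E_2$, by the definition of $\delta$ the cylinders $\mathrm{Cyl}(x_i',\delta/2)$ are pairwise disjoint and, for $j\neq i$, $x_j'\notin\mathrm{Cyl}(x_i',\delta/2)$; hence by independence of the Poisson process on disjoint sets,
$$
E_2 \;=\; \prod_{i=1}^{p}\EE\bigl(\lambda^{e[\xi]}\xi^{(\lambda)}(x_i',(\eta_\lambda^\sT\cap\mathrm{Cyl}(x_i',\delta/2))\cup\{x_i'\})\bigr)^{2},
$$
and each factor is bounded by a constant via the $p=2$ case of (i). For $E_1$, H\"older's inequality gives $E_1\le\prod_{i=1}^p(\EE Y_i^{2p})^{1/p}$ with $Y_i=\lambda^{e[\xi]}\xi^{(\lambda)}(x_i',\eta_\lambda^\sT\cup\bigcup_{j=1}^p\{x_j'\})$, reducing the bound to a single-point $(2p)$-th moment in the augmented process. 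Running through the proof of (i) for this augmented process --- the tail bounds \eqref{eq:ExpStabilization} and \eqref{eq:ExtremExpDecay} survive the insertion of $p$ deterministic extra points up to constants absorbable into a $c^p$ factor --- yields $\EE Y_i^{2p}\le c^{2p}((2p)!)^{w[\xi]}$, and $(2p)!\le C^p(p!)^{2}$ finishes the argument.

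The main obstacle is achieving the sharp factorial exponent $w[\xi_{f_j}]=j$ in part (i) for the face functionals: this requires delicately balancing the polynomial-in-$R$ control of the conditional Poisson mean (via the extremity decay) against the intrinsic factorial growth of Poisson moments, exploited through the inequality $(pj)!\le j^{pj}(p!)^{j}$. A secondary, routine technicality is verifying in part (ii) for $E_1$ that all stabilization and extremity estimates persist when the Poisson process is augmented by finitely many deterministic points, so that the constants remain uniform in the inserted points $x_1',\ldots,x_p'$.
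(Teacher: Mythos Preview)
Your treatment of the face functionals $\xi_{f_j}$ in part (i) has a genuine gap. You set $N:=|\cX\cap\mathrm{Cyl}(y,R)|$ as the \emph{total} Poisson count in the localization cylinder and then claim that the extremity decay \eqref{eq:ExtremExpDecay} bounds the conditional mean of $N$ by $CR^{d-1}$. But $\mathrm{Cyl}(y,R)$ has vertical extent $\lambda^{2/(d+1)}$ and the intensity of $\eta_\lambda^{\sT}$ is of order one there, so the total count has conditional mean of order $R^{d-1}\lambda^{2/(d+1)}$, which is not uniform in $\lambda$. The extremity decay controls the probability that a \emph{given} point is extreme, not the raw count. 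If instead you intend $N$ to be the number of \emph{extreme} points in the cylinder (which is what actually dominates the face count), then $N$ is no longer conditionally Poisson and your ``standard Poisson moment estimates'' do not apply. The paper takes exactly this second $N$, but invokes directly the exponential tail $\PP(N\ge s)\le c\,e^{-cs}$ from \cite{CalkaSchreiberYukich}; a single tail integral then gives $\EE N^{jp}\le c^p(jp)!\le c^p j^{jp}(p!)^j$ without any conditioning on $R$.

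Two smaller points where your argument diverges from the paper. For the volume-type functionals your pointwise bound $\lambda\,\xi^{(\lambda)}\le CR^{d+1}$ relies on the unjustified claim that the vertical extent is $\le CR^2$; the boundary height over $\BB^{d-1}(v,R)$ is bounded by $h+R^2/2$ with $h$ the height coordinate of the base point, and you have not shown $h\le CR^2$. The paper instead bounds the functional by a cylinder volume $R^{d-1}S$ with a separate height variable $S$ and controls the joint tail of $(R,S)$ via \eqref{eq:ExpStabilization2} together with Equation~(4.5) of \cite{CalkaSchreiberYukich}, obtaining $(p!)^2$. In part (ii), your factorization of $E_2$ is correct, but each factor involves the \emph{cylinder-restricted} process $\eta_\lambda^{\sT}\cap\mathrm{Cyl}(x_i',\delta/2)$, to which the $p=2$ case of (i) does not apply as stated (removing Poisson points can enlarge both the missed volume and the localization radius); likewise your H\"older bound for $E_1$ retains the $p$ deterministic extra points and you assert without proof that \eqref{eq:ExpStabilization} and \eqref{eq:ExtremExpDecay} survive their insertion uniformly. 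The paper bypasses both issues by quoting Lemma~3.2 of \cite{SchreiberYukich}, which yields $E_2\le E_1$ and $E_1\le\prod_{i=1}^p\bigl(\EE(\lambda^{e[\xi]}\xi^{(\lambda)}(x_i',\eta_\lambda^{\sT}\cup\{x_i'\}))^{2p}\bigr)^{1/p}$ with the extra points already removed, so that part (i) applies verbatim.
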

\begin{proof} We start with part (i) and consider the missed-volume functional $\xi=\xi_{V_d}$. We can assume that $\sT_\l(x)$ is an extreme point of $\Psi^{(\l)}$, since otherwise $\xi^{(\l)}(x,\,\cdot\,)$ is zero. Now, we notice that in this case and for sufficiently large $\l$ the random variable $\l^{e[\xi]}\xi^{(\l)}(\sT_\l(x),\eta_\l^\sT\cup\{\sT_\l(x)\})$ is bounded by $R(x)S(x)$, the volume of a cylinder with height $S(x)$ whose base is a $(d-1)$-dimensional ball with radius $R(x)$. Here, $S(x)=\sup_{w\in\BB^{d-1}(v,R(x))}\partial\Psi^{(\l)}(w)$, $R(x)$ stands for the radius of localization of $\xi^{(\l)}$ at $\sT_\l(x)$ and $v$ is the spatial coordinate of $x$ under the scaling transformation $\sT_\l$. Using \eqref{eq:ExpStabilization2} and (a simplified version of) \cite[Equation (4.5)]{CalkaSchreiberYukich}, we conclude that, for sufficiently large $\l$,
\begin{align*}
&\EE\big|\l^{e[\xi]}\xi^{(\l)}(\sT_\l(x),\eta_\l^\sT\cup\{\sT_\l(x)\})\big|^p = p\int_0^\infty\dint s\, s^{p-1}\,\PP(\l^{e[\xi]}\xi^{(\l)}(\sT_\l(x),\eta_\l^\sT\cup\{\sT_\l(x)\})\geq s)\\
&\qquad\leq p\,c_{12}\int_0^\infty\dint r\int_0^\infty\dint s\, s^{p-1}\exp(-c_{13}\,(s/r))\,\exp(-c_{14}\,r)\\
&\qquad\leq c_{15}\,c_{16}^p\,(p!)^2\,.
\end{align*}
From the inequalities $\xi_{V_1},\ldots,\xi_{V_{d-1}}\leq\xi_{V_d}$ and $\xi_{VF}\leq\xi_{V_d}$ we obtain the result of part (i) for $\xi_{V_1},\ldots,\xi_{V_{d-1}}$ and $\xi_{VF}$.

Finally, let $\xi=\xi_{f_j}$ for some $j\in\{0,\ldots,d-1\}$. Let $x\in\BBd$ and $R$ be the radius of localization of $\xi_{f_j}$ at $\sT_\l(x)$. By $N$ we denote the number of extreme points of $\Psi^{(\l)}$ in ${\rm Cyl}(\sT_\l(x),R)$. According to \cite{CalkaSchreiberYukich}, the random variable $N$ has exponentially decaying tails uniformly in $\l$, whenever $\l$ is sufficiently large. Now, if $j=0$, then $\xi_{f_0}\leq 1$ and hence
$$
\EE\big|\xi_{f_0}^{(\l)}(\sT_\l(x),\eta_\l^\sT\cup\{\sT_\l(x)\})\big|^p\leq 1
$$
for all $p\geq 1$. If $j\in\{1,\ldots,d-1\}$, we observe that the number of $j$-dimensional faces meeting at $x$ is bounded by ${1\over j+1}{N\choose j}\leq N^j$. So,
\begin{align*}
&\EE\big|\xi_{f_j}^{(\l)}(\sT_\l(x),\eta_\l^\sT\cup\{\sT_\l(x)\})\big|^p\leq \EE N^{jp} \leq \int_0^\infty jp\,s^{jp-1}\,\PP(N\geq s)\,\dint s\\
&\leq \int_0^\infty jp\,s^{jp-1}\,c_{17}\,e^{-c_{18}s}\,\dint s\leq c_{19}\,c_{20}^p\,(pj)! \leq c_{19}\,c_{21}^p\,((p)!)^j\,,
\end{align*}
where we used the inequality $(pj)!\leq (j^{j})^p\,(p!)^j$. The proof of (i) is thus complete.

For part (ii) we first have from the proof of Lemma 3.2 in \cite{SchreiberYukich} that $E_2\leq E_1$ and that $E_1$ is bounded from above by
$$
\prod_{i=1}^p\big(\EE\big(\l^{e[\xi]}\xi^{(\l)}(x_i,\eta_\l^{\sT}\cup\{x_i\})\big)^{2p}\big)^{1/p}\,.
$$
For sufficiently large $\l$ this can now be estimated by means of part (i) and the result then follows from the fact that $(2p)!\leq 4^p\,(p!)^2$.
\end{proof}

\begin{remark}\rm
The proof above shows that one can take $c_5=\ldots=c_{11}=1$ in Lemma \ref{lem:MomentsAllOrders} if $\xi=\xi_{f_0}$ is the vertex counting functional.
\end{remark}

Our next result formalizes the intuition that the cluster measures $U_\l^{S,T}$ capture the spatial correlations of the rescaled key-geometric functionals. In particular, we show that these correlations decay exponentially fast.

\begin{lemma}\label{lem2:ExpClustering}
Let $\{S,T\}$ be a non-trivial partition of $\setk$ and $\xi\in\Xi$ be a key geometric functional. Then for $x_1'=(v_1,h_1),\ldots,x_k'=(v_k,h_k)\in\cR_\l$ and $\l\geq c_{22}$ one has that
\begin{equation}\label{eq:ExpClutering1}
\big|m_\l(\bx_{S\cup T}')-m_\l(\bx_S')m_\l(\bx_T')\big|\leq c_{23}\,c_{24}^k\,k\,(k!)^{w[\xi]}\,\exp\big(-c_{25}\max\big\{\d,h_1,\ldots,h_k\big\}\big)\,.
\end{equation}
Here,
$$
\d:=\min_{s\in S,t\in T}\|v_s-v_t\|
$$
is the separation of the points in $\RR^{d-1}$, and
$$
m_\l(\bx_S') := \EE\Big[\prod_{s\in S}\l^{e[\xi]}\xi^{(\l)}\Big(x_s',\eta_\l^\sT\cup\bigcup_{s'\in S}\{x_{s'}'\}\Big)\Big]
$$
with $m_\l(\bx_T')$ and $m_\l(\bx_{S\cup T}')$ defined similarly.
\end{lemma}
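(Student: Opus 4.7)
\smallskip
\noindent\textit{Proof plan.} The plan is to establish two separate estimates, one of order $\exp(-c\d)$ that encodes the spatial decoupling between the points in $S$ and those in $T$, and one of order $\exp(-ch_{\max})$, where $h_{\max}:=\max\{h_1,\ldots,h_k\}$, that reflects the exponential scarcity of extreme particles at large height via \eqref{eq:ExtremExpDecay2}. Since the pointwise minimum of $\exp(-c\d)$ and $\exp(-ch_{\max})$ equals $\exp(-c\max\{\d,h_{\max}\})$, combining the two bounds yields the claim.

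For the spatial bound I would use the radius of localization $R_i$ of $\xi^{(\l)}$ at the point $x_i'$ in the configuration $\eta_\l^\sT\cup\{x_1',\ldots,x_k'\}$ and introduce the truncated functionals
$$
\tilde\xi_s^\l:=\l^{e[\xi]}\xi^{(\l)}\big(x_s',(\eta_\l^\sT\cup\{x_j':j\in S\})\cap{\rm Cyl}(x_s',\d/2)\big),\qquad s\in S,
$$
with $\tilde\xi_t^\l$ defined analogously for $t\in T$. On the event $A:=\bigcap_{i=1}^k\{R_i<\d/2\}$ one has the identities $\xi_i^\l=\tilde\xi_i^\l$; moreover, the $S$--$T$ separation forces the cylinders ${\rm Cyl}(x_s',\d/2)$ with $s\in S$ to be disjoint from those with $t\in T$, so that $\prod_{s\in S}\tilde\xi_s^\l$ and $\prod_{t\in T}\tilde\xi_t^\l$ are measurable with respect to independent sub-$\sigma$-algebras of the underlying Poisson process and are therefore independent. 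Splitting the difference $m_\l(\bx_{S\cup T}')-m_\l(\bx_S')m_\l(\bx_T')$ into a leading piece that vanishes on $A$ by the previous independence statement, plus a handful of remainder terms each carrying an indicator of $A^c$ (or of $A_S^c$, $A_T^c$), I would then apply the Cauchy--Schwarz inequality: the second moments are controlled by Lemma~\ref{lem:MomentsAllOrders}(ii) (using $E_1$ for the true products and $E_2$ for the truncated ones) by $c^k(k!)^{2w[\xi]}$, while a union bound together with \eqref{eq:ExpStabilization2} yields $\PP(A^c)\leq kc_1\exp(-c_1^{-1}\d/2)$. This produces
$$
|m_\l(\bx_{S\cup T}')-m_\l(\bx_S')m_\l(\bx_T')|\leq C_1\,C_2^k\,k\,(k!)^{w[\xi]}\,\exp(-C_3\,\d).
$$

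For the vertical bound I would exploit that $\xi^{(\l)}(x,\cdot)=0$ whenever $x\notin\ext(\Psi^{(\l)})$. Choosing an index $i^*$ with $h_{i^*}=h_{\max}$ and attributing it to its block $S$ or $T$, both $m_\l(\bx_{S\cup T}')$ and $m_\l(\bx_S')m_\l(\bx_T')$ factor through the indicator $\mathbf 1\{x_{i^*}'\in\ext(\Psi^{(\l)})\}$, so that Cauchy--Schwarz combined with \eqref{eq:ExtremExpDecay2} and Lemma~\ref{lem:MomentsAllOrders}(ii) yields
$$
|m_\l(\bx_{S\cup T}')|+|m_\l(\bx_S')m_\l(\bx_T')|\leq C_4\,C_5^k\,(k!)^{w[\xi]}\,\exp(-C_6\,h_{\max}).
$$
Taking the minimum of the two displayed bounds proves the lemma. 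The principal technical difficulty I anticipate is the correct propagation of the factorial factor $(k!)^{w[\xi]}$ through the Cauchy--Schwarz step: a naive bound $\EE\prod_i|\xi_i^\l|^2\leq\prod_i(\EE|\xi_i^\l|^{2k})^{1/k}$ would introduce powers of $k$ that cannot be absorbed at the later cumulant stage, which is exactly why the joint moment estimates for $E_1$ and $E_2$ in Lemma~\ref{lem:MomentsAllOrders}(ii), and not merely the one-point estimate of part~(i), are indispensable here.
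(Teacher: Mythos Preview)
Your proposal is correct and follows essentially the same route as the paper: truncate to cylinders of radius $\delta/2$, use the disjointness of the $S$- and $T$-cylinders to factor the truncated product, and control the remainders via Cauchy--Schwarz together with Lemma~\ref{lem:MomentsAllOrders}(ii) and the localization tail \eqref{eq:ExpStabilization2}, exactly as in the paper's decomposition $\EE W-\EE X\,\EE Y=\EE(W-W_\delta)-\EE X_\delta\,\EE(Y-Y_\delta)-\EE Y\,\EE(X-X_\delta)$. The only cosmetic difference is in the height bound: the paper inserts the extreme-point event $N_E$ into the \emph{same} remainder terms $\EE|X-X_\delta|$, $\EE|Y-Y_\delta|$, $\EE|W-W_\delta|$, whereas you bound $|m_\l(\bx_{S\cup T}')|$ and $|m_\l(\bx_S')m_\l(\bx_T')|$ separately via the triangle inequality and the indicator ${\bf 1}\{x_{i^*}'\in\ext(\Psi^{(\l)})\}$; your variant is arguably cleaner, since it sidesteps the question of whether the truncated product $X_\delta$ also vanishes on $N_E^c$.
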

\begin{proof}
Define the random variables 
\begin{align*}
X&:= \prod_{s\in S} \lambda^{e[\xi]} \xi^{(\lambda)}\Big(x_s',\eta_\l^\sT\cup\bigcup_{s'\in S}\{x_{s'}'\}\Big)\,,\qquad 
Y:= \prod_{t\in T} \lambda^{e[\xi]} \xi^{(\lambda)}\Big(x_t', \eta_\l^\sT\cup\bigcup_{t'\in T}\{x_{t'}'\}\Big)\,,\\
W&:= \prod_{r\in S\cup T} \lambda^{e[\xi]} \xi^{(\lambda)}\Big(x_r', \eta_\l^\sT\cup\bigcup_{r'\in S\cup T}\{x_{r'}'\}\Big)
\end{align*}
and
\begin{align*}
X_\d&:= \prod_{s\in S} \lambda^{e[\xi]} \xi^{(\lambda)}\Big(x_s',\Big(\eta_\l^\sT\cup\bigcup_{s'\in S}\{x_{s'}'\}\Big)\cap {\rm Cyl}\Big(x_s',{\d\over 2}\Big)\Big)\,,\\ 
Y_\d &:= \prod_{t\in T} \lambda^{e[\xi]} \xi^{(\lambda)}\Big(x_t', \Big(\eta_\l^\sT\cup\bigcup_{t'\in T}\{x_{t'}'\}\Big)\cap {\rm Cyl}\Big(x_t',{\d\over 2}\Big)\Big)\,,\\
W_\d &:= \prod_{r\in S\cup T} \lambda^{e[\xi]} \xi^{(\lambda)}\Big(x_r', \Big(\eta_\l^\sT\cup\bigcup_{r'\in S\cup T}\{x_{r'}'\}\Big)\cap {\rm Cyl}\Big(x_r',{\d\over 2}\Big)\Big)\,,
\end{align*}
where, recall, for a point $x=(v,h)$, ${\rm Cyl}(x,r)$ stands for the cylinder $\BB^{d-1}(v,r)\times\RR_+$. Since for $s\in S$ and $t\in T$, ${\rm Cyl}\big(x_s', \frac{\delta}{2}\big)\cap {\rm Cyl}\big(x_t', \frac{\delta}{2}\big)=\emptyset$ by definition of the separation $\d$ we have, by independence, that $\EE W_\delta = \EE X_\delta\,\EE Y_\delta$ and hence
\begin{align}\label{eq:ZwischenR}
\nonumber & m_\lambda(\bx_{S\cup T}') - m_\lambda(\bx_{S}') m_\lambda(\bx_{T}') = \EE W - \EE X\,\EE Y\\
\nonumber &\qquad= \EE W_\delta-\EE X_\delta\,\EE Y_\delta+\EE(W-W_\delta) - \EE X_\delta\, \EE(Y-Y_\delta) - \EE Y\,\EE(X-X_\delta)\\
&\qquad= \EE(W-W_\delta) - \EE X_\delta\,\EE(Y-Y_\delta) - \EE Y\,\EE(X-X_\delta)\,.
\end{align}

Let $N_S$ denote the event that the radius of localization of at least one $x_s'$ with $s\in S$ exceeds $\d/2$. On the complement of $N_S$ we clearly have that $X$ coincides with $X_\delta$. We thus obtain from H\"older's inequality that
\begin{align*}
\EE|X-X_\delta| = \EE|X_\delta {\bf 1}_{N_S}|\leq (\EE X_\delta^2)^{1/2}\,\PP(N_S)^{1/2}\,.
\end{align*}
The moment in the first factor is bounded by $c_{26}^{|S|}((|S|)!)^{2w[\xi]}$ by Lemma \ref{lem:MomentsAllOrders} and the probability is bounded by $c_{27}\,|S|\,\exp(-c_{28}\,\d)$ in view of the exponential localization property \eqref{eq:ExpStabilization2}. Thus,
\begin{equation}\label{eq:xxx1}
\EE|X-X_\delta| \leq c_{29}\,c_{30}^k\,\sqrt{|S|}\,((|S|)!)^{w[\xi]}\,\exp\big(-c_{31}\,\d\big)\leq c_{29}\,c_{30}^k\,k\,((|S|)!)^{w[\xi]}\,\exp\big(-c_{31}\,\d\big)
\end{equation}
with a similar estimate also for $\EE|Y-Y_\delta|$ and $\EE|W-W_\delta|$, since $|S|,|T|,|S\cup T|\leq k$. 

Next, we denote by $N_E$ the event that for all $r\in S\cup T$, $x_r$ is an extreme point of the generalized growth process $\Psi^{(\l)}$. If only one of the points $x_r$ does not satisfy this property, the difference between the corresponding expectations is zero, since $\xi^{(\l)}(x_r,\,\cdot\,)$ is zero. Replacing the event $N_S$ above by $N_E$ yields in view of the exponential decay property \eqref{eq:ExtremExpDecay2} that
\begin{equation}\label{eq:xxx2}
\EE|X-X_\delta| \leq c_{32}\,c_{33}^k\,k\,((|S|)!)^{w[\xi]}\,\exp\big(-c_{34}\,\max\big\{h_1,\ldots,h_k\big\}\big)\,,
\end{equation}
again with similar estimates also for $\EE|Y-Y_\delta|$ and $\EE|W-W_\delta|$. Combining \eqref{eq:ZwischenR} with \eqref{eq:xxx1} and \eqref{eq:xxx2} finally allows us to conclude from Lemma \ref{lem:MomentsAllOrders} that
\begin{align*}
& \left|m_\lambda(\bx_{S\cup T}') - m_\lambda(\bx_{S}') m_\lambda(\bx_{T}')\right|\\
& \leq c_{35}\,c_{36}^k\,k\,\exp\big(-c_{37}\,\max\big\{\d,h_1,\ldots,h_k\big\}\big)\,\big(((|S\cup T|)!)^{w[\xi]}+2((|S|)!(|T|)!)^{w[\xi]}\big)\\
& \leq c_{35}\,c_{36}^k\,k\,\exp\big(-c_{37}\,\max\big\{\d,h_1,\ldots,h_k\big\}\big)\,\big((k!)^{w[\xi]}+2(k!)^{w[\xi]}\big)\\
& \leq c_{38}\,c_{36}^k\,k\,(k!)^{w[\xi]}\,\exp\big(-c_{37}\,\max\big\{\d,h_1,\ldots,h_k\big\}\big)\,.
\end{align*}
This completes the proof.
\end{proof}

\begin{remark}\rm 
Lemma \ref{lem2:ExpClustering} is a modification of Lemma 5.2 in \cite{BaryshnikovYukich} or Lemma 3.3 in \cite{EichelsbacherSchreiberRaic}, which exhibits a characteristic feature of random polytopes that is not present in the aforementioned papers. In particular, Lemma \ref{lem2:ExpClustering} shows that, in the rescaled picture, only points close to the (tangent) hyperplane $\RR^{d-1}$ contribute to $\mu_\l^\xi$, while points with a large height coordinate can asymptotically be neglected.
\end{remark}


We define the diagonal $\Delta:=\{(x,\ldots,x)\in(\BBd)^k:x\in\BBd\}$ and for $\bx\in(\BBd)^k$ with $\bx=(x_1,\ldots,x_k)$ and $\sT_\l(x_1)=(v_1,h_1),\ldots,\sT_\l(x_k)=(v_k,h_k)\in\cR_\l$ we put
$$
\d(\bx):=\d(x_1,\ldots,x_k):=\min\{d(v_S,v_T):\{S,T\}\preceq\setk\}\,,
$$
where $d(v_S,v_T):=\min\{\|v_s-v_t\|:s\in S,t\in T\}$. We now observe that $(\BBd)^k\setminus\Delta$ can be written as a disjoint union of sets $\sigma(\{S,T\})\subseteq(\BBd)^k$ with non-trivial partitions $\{S,T\}\preceq\setk$ in such a way that $\d(\bx)=d(v_S,v_T)$ for all $\bx\in\sigma(\{S,T\})$, see \cite{BaryshnikovYukich}. As a consequence,  $\lan f^{\s k},c_\l^k\ran$ can be decomposed as follows:
\begin{equation}\label{eq:CumulantSumRep}
\lan f^{\s k},c_\l^k\ran = \int_{\Delta} \dint c_\l^k\, f^{\s k}+\sum_{S,T\preceq\setk}\int_{\sigma(\{S,T\})}\dint c_\l^k\, f^{\s k}\,.
\end{equation}
We consider both terms in \eqref{eq:CumulantSumRep} separately and start with the diagonal term. To state the result, let us define
$$
v[\xi] := \begin{cases} 2 &: \xi\in\{\xi_{V_1},\ldots,\xi_{V_d},\xi_{VF}\}\\ 2j &: \xi=\xi_{f_j}\text{ for some }j\in\{0,\ldots,d-1\}\,. \end{cases}
$$

\begin{lemma}\label{lem3:Diagonal}
For $\xi\in\Xi$ and $f\in\cB(\BBd)$ one has that
$$
\Bigg|\int_{\Delta} \dint c_\l^k\, f^{\s k}\Bigg|\leq c_{38}\,c_{39}^k\,(k!)^{v[\xi]}\,\|f\|_\infty^k\,\l^{{d-1\over d+1}}
$$
for all $\l\geq c_{40}$.
\end{lemma}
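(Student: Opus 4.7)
The plan is to reduce $\int_\Delta f^{\s k}\,\dint c_\l^k$ to a one-dimensional integral over $\BBd$ by exploiting the singular structure of the moment measures, and then to control the resulting $k$-th moment of $\l^{e[\xi]}\xi^{(\l)}$ by combining Lemma \ref{lem:MomentsAllOrders}(i) with the exponential decay of the extreme-point probability near $\SSd$.

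First I would expand $c_\l^k=\sum_{\pi\preceq\setk}(-1)^{|\pi|-1}(|\pi|-1)!\bigotimes_{L\in\pi}M_\l^{|L|}$ and argue that on the top diagonal $\Delta$ only the trivial partition $\pi=\{\setk\}$ survives. Indeed, each factor $M_\l^{|L|}=m_\l\,\tilde\dint_\l$ is, via the singular differential, a sum over sub-partitions of $L$ of measures supported on the associated partition diagonals of $(\BBd)^{|L|}$. Restricting $\bigotimes_{L\in\pi}M_\l^{|L|}$ to $\Delta$ forces each factor onto its own top diagonal (only the $\bar\dint_\l\bx_L$-term contributes) and additionally forces the $p:=|\pi|$ resulting base points $y_{L_1},\ldots,y_{L_p}$ to coincide across blocks; this last condition is codimension $p-1$ in $\BBd^{p}$ and hence null for the absolutely continuous product $\l^p\,\dint y_1\cdots\dint y_p$ whenever $p\geq 2$. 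Combined with the coefficient $(-1)^0 0!=1$ at $\pi=\{\setk\}$, this yields
\begin{equation*}
\int_\Delta f^{\s k}\,\dint c_\l^k \,=\, \l\int_\BBd f(y)^k\,\EE\bigl[\bigl(\l^{e[\xi]}\xi^{(\l)}(\sT_\l(y),\eta_\l^\sT\cup\{\sT_\l(y)\})\bigr)^k\bigr]\,\dint y\,.
\end{equation*}

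Next I would bound the $k$-th moment in the integrand by a quantity that decays exponentially in the rescaled height $h(y):=\l^{2/(d+1)}(1-\|y\|)$. Since every $\xi\in\Xi$ vanishes off the set of extreme points, a Cauchy--Schwarz step together with Lemma \ref{lem:MomentsAllOrders}(i) at exponent $2k$ (using the crude estimate $((2k)!)^{w[\xi]}\leq 4^{kw[\xi]}(k!)^{2w[\xi]}$) and the tail bound \eqref{eq:ExtremExpDecay2} for $\PP(\sT_\l(y)\in\ext(\Psi^{(\l)}))$ produces
\begin{equation*}
\EE\bigl[\bigl(\l^{e[\xi]}\xi^{(\l)}(\sT_\l(y),\eta_\l^\sT\cup\{\sT_\l(y)\})\bigr)^k\bigr] \,\leq\, c_{41}\,c_{42}^k\,(k!)^{w[\xi]}\,\exp\bigl(-c_{43}\,h(y)\bigr)\,.
\end{equation*}

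Finally, I would integrate in spherical coordinates $y=ru$ with $r\in[0,1]$, $u\in\SSd$, and substitute $h=\l^{2/(d+1)}(1-r)$ so that $\dint y = r^{d-1}\l^{-2/(d+1)}\,\dint h\,\dint\cH_{\SSd}^{d-1}(u)$, giving
\begin{equation*}
\l\int_\BBd |f(y)|^k\,e^{-c_{43}h(y)}\,\dint y \,\leq\, \|f\|_\infty^k\,\cH_{\SSd}^{d-1}(\SSd)\,\l^{(d-1)/(d+1)}\int_0^\infty e^{-c_{43}h}\,\dint h \,\leq\, c_{44}\,\|f\|_\infty^k\,\l^{(d-1)/(d+1)}\,.
\end{equation*}
Combining the last three displays and noting that $w[\xi]\leq v[\xi]$ yields the claimed estimate. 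The main obstacle is the first step: dissecting the partition diagonals carried by $\bigotimes_{L\in\pi}M_\l^{|L|}$ and verifying rigorously, through the singular differential $\tilde\dint_\l$, that every stratum transverse to $\Delta$ contributes zero. Once that reduction is secured, the remainder is a direct application of the moment bound of Lemma \ref{lem:MomentsAllOrders} and the localization of $\Pi_\l$'s boundary in a thin annulus about $\SSd$.
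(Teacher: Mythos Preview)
Your proposal is correct and follows essentially the same route as the paper: reduce to the trivial partition on $\Delta$, apply Cauchy--Schwarz with Lemma \ref{lem:MomentsAllOrders}(i) at exponent $2k$ together with \eqref{eq:ExtremExpDecay2}, and finish by the spherical change of variables. Your intermediate moment bound $(k!)^{w[\xi]}$ is in fact slightly sharper than the paper's $(k!)^{v[\xi]}$ (for $\xi=\xi_{f_j}$ the paper does not exploit the square root fully), and you correctly weaken it at the end since $w[\xi]\leq v[\xi]$.
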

\begin{proof}
By definition \eqref{eq:DefCumulantMeasure} of the cumulant measures we have that
\begin{align*}
\int_\Delta \dint c_\lambda^k\,f^{\s k} &= \sum_{L_1,\ldots,L_p\preceq\setk} (-1)^{(p-1)} (p-1)! \,\int_\Delta \dint(M_\lambda^{|L_1|}\otimes\cdots\otimes M_\lambda^{|L_p|})\,f^{\s k}\\
&= \sum_{L_1,\ldots,L_p\preceq\setk} (-1)^{(p-1)} (p-1)!\, \int_\Delta\tilde\dint_\l\bx \, m_\lambda(\bx_{L_1})\cdots m_\lambda(\bx_{L_p})\,f^{\s k}(\bx)\,.
\end{align*}    
Since we are integrating over the diagonal $\Delta$, $\bx$ is of the form $(x,\ldots,x)$ for some $x\in\BBd$ and we can only have $p=1$ in the above sum. We thus have that
$$
\Bigg|\int_{\Delta}\dint c_\l^k\,f^{\s k}\Bigg|\leq \|f\|_\infty^k\,\l\int_{\BB^d}\dint x\,\Bigg|\EE\Big(\l^{e[\xi]}\xi^{(\l)}\Big(\sT_\l(x),\eta_\l^\sT\cup\{\sT_\l(x)\}\Big)\Big)^k\Bigg|\,.
$$
We notice that $\xi^{(\lambda)}$ is different from zero if and only if $\sT_\l(x)$ is an extreme point of $\Psi^{(\lambda)}$. Thus, using the Cauchy-Schwarz inequality, Lemma \ref{lem:MomentsAllOrders} and the exponential decay property \eqref{eq:ExtremExpDecay2} we find that, for $\xi\in\{\xi_{V_1},\ldots,\xi_{V_d},\xi_{VF}\}$,
\begin{align*}
&\Bigg|\EE\Big(\l^{e[\xi]}\xi^{(\l)}\Big(\sT_\l(x),\eta_\l^\sT\cup\{\sT_\l(x)\}\Big)\Big)^k\Bigg|\\
&\qquad = \Bigg|\EE\Big(\l^{e[\xi]}\xi^{(\l)}\Big(\sT_\l(x),\eta_\l^\sT\cup\{\sT_\l(x)\}\Big)\,{\bf 1}(\sT_\l(x)\in\ext(\Psi^{(\l)}))\Big)^k\Bigg|\\
& \qquad\leq c_{41}\,c_{42}^k\,(2k)!\, \exp\big(-c_{43} h\big)\\
& \qquad\leq c_{41}\,c_{44}^k\,(k!)^{2}\, \exp\big(-c_{43} h\big)
\end{align*} 
for sufficiently large $\l$, since $(2k)!\leq 4^k\,(k!)^2$. Here, $h$ is the height coordinate of $x$ under the transformation $\sT_\l$.
Similarly, using Lemma \ref{lem:MomentsAllOrders} with $\xi=\xi_{f_j}$ for some $j\in\{0,\ldots,d-1\}$ we find that
\begin{align*}
&\Bigg|\EE\Big(\l^{e[\xi]}\xi^{(\l)}\Big(\sT_\l(x),\eta_\l^\sT\cup\{\sT_\l(x)\}\Big)\Big)^k\Bigg|\leq c_{41}\,c_{44}^k\,(k!)^{2j}\, \exp\big(-c_{43} h\big)
\end{align*} 
and thus
\begin{align*}
&\Bigg|\EE\Big(\l^{e[\xi]}\xi^{(\l)}\Big(\sT_\l(x),\eta_\l^\sT\cup\{\sT_\l(x)\}\Big)\Big)^k\Bigg|\leq c_{41}\,c_{44}^k\,(k!)^{v[\xi]}\, \exp\big(-c_{43} h\big)
\end{align*} 
for all $\xi\in\Xi$.  Integrating this expression over $\BBd$ by introducing spherical coordinates and taking into account the definition \eqref{eq:ScalingTrafo} of $\sT_\l$ yields that
\begin{align*}
&\Bigg|\int_{\Delta}\dint c_\l^k\,f^{\s k}\Bigg|\leq c_{45}\,c_{46}^k\,(k!)^{v[\xi]}\,\|f\|_\infty^k\,\l\int_{\SSd}\cH_{\SSd}^{d-1}(\dint u)\\
&\qquad\qquad\qquad\qquad\qquad\times\int_0^{\l^{2\over d+1}}\dint h\,\exp\big(-c_{47} h\big)\,\l^{-{2\over d+1}}(1-\l^{-{2\over d+1}}h)\\
&\qquad\leq c_{45}\,c_{46}^k\,(k!)^{v[\xi]}\,\|f\|_\infty^k\,\l^{1-{2\over d+1}}\int_{\SSd}\cH_{\SSd}^{d-1}(\dint u)\int_0^{\infty}\dint h\,\exp\big(-c_{47} h\big)\,.
\end{align*}
Using now fact that $1-{2\over d+1} = \frac{d-1}{d+1}$, $\int_{\SSd}\cH_{\SSd}^{d-1}(\dint u)=d\kappa_d$ and that $\int_0^\infty\dint h\,\exp(-c_{47}h)=c_{47}^{-1}$, we conclude that
\begin{align*}
\Bigg|\int_{\Delta}\dint c_\l^k\,f^{\s k}\Bigg|&\leq c_{48}\,c_{46}^k\,(k!)^{v[\xi]}\,\|f\|_\infty^k\,\l^{{d-1\over d+1}}\,.
\end{align*}
This completes the proof.
\end{proof}

In a next step we derive a first upper bound of the off-diagonal term in \eqref{eq:CumulantSumRep}. For this, we need the following auxiliary result.

\begin{lemma}\label{lem:Kombinatorik}
Let $k\geq 1$ be an integer and suppose that $k=k_1+\ldots+k_\ell$ with at most $k$ integers $k_1,\ldots,k_\ell\geq 1$. Then $k_1\cdots k_\ell\leq 4\cdot 3^k$.
\end{lemma}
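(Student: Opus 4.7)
The plan is to reduce the statement to an elementary pointwise inequality, since the target bound $4\cdot 3^k$ is comfortably loose. The key ingredient is that $n\leq 2^n$ for every positive integer $n$, which follows by a one-line induction: the base $n=1$ is trivial, and $2^{n+1}=2\cdot 2^n\geq 2n\geq n+1$ for $n\geq 1$. Applying this termwise to each $k_i$ and multiplying gives
$$
k_1\cdots k_\ell \;\leq\; 2^{k_1}\cdots 2^{k_\ell} \;=\; 2^{k_1+\cdots+k_\ell} \;=\; 2^k\,,
$$
and since $2^k\leq 4\cdot 3^k$ trivially, the claim follows. Note that the hypothesis $\ell\leq k$ is automatic from $k_i\geq 1$ and $\sum_i k_i=k$, so it plays no role in the argument.

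I do not anticipate a real obstacle here, because the bound is far from sharp. If one cared about the best constant, the classical analysis of the maximum of $\prod k_i$ over integer partitions of $k$ shows that the optimum is of order $3^{k/3}$, achieved by partitions consisting mostly of $3$'s (with a trailing $2$ or $4$ depending on $k\bmod 3$). This would yield something like $k_1\cdots k_\ell\leq 4\cdot 3^{(k-4)/3}$, a genuine sharpening, but the statement with $4\cdot 3^k$ on the right is chosen precisely so that the crude termwise inequality above suffices, keeping the proof trivial while giving a bound that is good enough to be absorbed into the constants appearing later in the cumulant estimates.
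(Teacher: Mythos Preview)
Your proof is correct and considerably simpler than the paper's. The paper carries out the classical optimization: it argues that the maximizing partition uses only parts $2$ and $3$, with at most two $2$'s, and then bounds the resulting maximum $\max\{3^{k/3},\,2\cdot 3^{(k-2)/3},\,4\cdot 3^{(k-1)/3}\}$ crudely by $4\cdot 3^k$. You instead bypass the optimization entirely via the termwise bound $k_i\leq 2^{k_i}$, obtaining $k_1\cdots k_\ell\leq 2^k\leq 4\cdot 3^k$ in one line. The paper's route identifies the true order $3^{k/3}$ of the maximum, which is a sharper intermediate result, but since the stated lemma only asks for $4\cdot 3^k$ this information is discarded at the end; your shortcut is the cleaner match for what is actually claimed, and as you note, the slack is absorbed into the constant $c_{63}^k$ in the subsequent estimate anyway.
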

\begin{proof}
Let $k=k_1+\ldots+k_\ell$ and put $K:=k_1\cdots k_\ell$. We want to maximize $K$ over all choices of $\ell$ and $k_1,\ldots,k_\ell$. For this purpose we can assume that $k_i<5$ for all $i\in\{1,\ldots,\ell\}$. Namely, if there is a factor $k_i\geq 5$, we can replace it by the two factors $k_i-2$ and $2$, which becase of $k_i<2(k_i-2)$ increases the product. We can also assume that $k_i\geq 2$, since a factor $1$ cannot contribute to the product. Moreover, to maximize $K$ we have that $k_i\leq 3$, since a factor $4$ can always be split into $2\cdot 2$ without changing the product. Thus, we have that each $k_i$ satisfies $k_i\in\{2,3\}$. Finally, we note that $3+3$ yields a bigger product than $2+2+2$. This shows that the maximal product $K$ is realized in the following way. We take $k_1=\ldots=k_\ell=3$ if $k$ is divisible by $3$, we take $k_1=2$ and $k_2=\ldots=k_\ell=3$ if $k$ leaves remainder $2$ if $k$ is divided by $3$, and we take $k_1=k_2=2$ and $k_3=\ldots=k_\ell=3$ in the remaining case. Since $\ell\leq k$ we thus have that
$$
K\leq\max\{3^{k/3},2\cdot 3^{(k-2)/3},4\cdot 3^{(k-1)/3}\}\leq 4\cdot 3^k\,,
$$
which yields the result.
\end{proof}

\begin{lemma}\label{lem4:OffDiagonal}
Let $\xi\in\Xi$ and $f\in\cB(\BBd)$. Then, for $\l\geq c_{49}$,
\begin{align*}
&\Bigg|\sum_{S,T\preceq\setk}\int_{\sigma(\{S,T\})}\dint c_\l^k\, f^{\s k}\Bigg|\leq c_{59}\,c_{51}^k\,k\,(k!)^{w[\xi]+1}\,\|f\|_\infty^k\,\l^{d-1\over d+1}\,\sum_{L_1,\ldots,L_p\preceq\setk}\int_{\SSd}\cH_{\SSd}^{d-1}(\dint u)\\
&\qquad\qquad\qquad\times \int_0^\infty\dint h_1\ldots\int_0^\infty\dint h_p\int_{(\RR^{d-1})^{p-1}}\dint{\bf v}\,\exp\big(-c_{52}\max\{\d(0,{\bf v}),h_1,\ldots,h_p\}\big)\,.
\end{align*}
\end{lemma}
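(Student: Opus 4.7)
The plan is to combine the general decomposition~\eqref{eq:DecomCumulant} of the cumulant measure with the clustering estimate of Lemma~\ref{lem2:ExpClustering} and the moment bound of Lemma~\ref{lem:MomentsAllOrders}. First, for each non-trivial partition $\{S,T\}\preceq\setk$ I would apply~\eqref{eq:DecomCumulant} on $\sigma(\{S,T\})$, writing $c_\l^k$ as a signed sum of products $U_\l^{S',T'}\otimes M_\l^{|K_1|}\otimes\cdots\otimes M_\l^{|K_m|}$ with $\{S',T',K_1,\ldots,K_m\}\preceq\setk$. The sum of absolute values of the numerical coefficients is controlled by~\eqref{eq:EstimateCoeffDecom} and contributes $2^k k!$, which already produces one of the $k!$ factors in the target.

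Next I would replace each moment measure by its density with respect to the singular differential $\tilde\dint_\l$, so that after expanding each $M_\l^{|K_i|}$-factor we obtain, for every choice of blocks, a sum over partitions of $\setk$ into blocks $L_1,\ldots,L_p$ where points inside the same block are collapsed to a common location (contributing a factor $\l$ per block from $\bar\dint_\l$). For the cluster part I would invoke Lemma~\ref{lem2:ExpClustering} (applied with $|S'\cup T'|\le k$) to bound the density $m_\l(\bx'_{S'\cup T'})-m_\l(\bx'_{S'})m_\l(\bx'_{T'})$ by $c\,c^k\,k\,(|S'\cup T'|!)^{w[\xi]}\exp(-c\max\{\d,h_{\cdot}\})$; for each remaining $m_\l(\bx'_{K_i})$ I would apply H\"older's inequality together with Lemma~\ref{lem:MomentsAllOrders}(i) to obtain the bound $c^{|K_i|}(|K_i|!)^{w[\xi]}$. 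Since $|S'\cup T'|+\sum_i |K_i|=k$, the multinomial inequality yields $(|S'\cup T'|!)^{w[\xi]}\prod_i(|K_i|!)^{w[\xi]}\le (k!)^{w[\xi]}$, which combined with the $k!$ from~\eqref{eq:EstimateCoeffDecom} gives exactly the claimed $(k!)^{w[\xi]+1}$.

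Finally, I would pass to rescaled coordinates via the scaling transformation $\sT_\l$. Using spherical coordinates $(r,u)\in[0,1]\times\SSd$ and the formulas $v=\l^{1/(d+1)}\exp^{-1}(u)$, $h=\l^{2/(d+1)}(1-r)$, one checks that $\l\,\dint x\approx \dint v\,\dint h$ on the rescaled side, so that each collapsed block $L_i$ contributes a product $\dint v_i\,\dint h_i$, except that for a single distinguished (anchor) block the angular integration is left over $\SSd$: writing $\l\int_{\BBd}\dint x=\l\int_\SSd\cH_{\SSd}^{d-1}(\dint u)\int_0^1 r^{d-1}\dint r$ and performing the height substitution produces exactly the factor $\l^{(d-1)/(d+1)}\int_\SSd\cH_{\SSd}^{d-1}(\dint u)\int_0^\infty\dint h_1$. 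The separation $\d$ appearing in the cluster estimate is invariant under translation of the spatial coordinates, so the remaining $p-1$ spatial integrations range over $(\RR^{d-1})^{p-1}$. Assembling the factors $2^k k!$, $k\cdot(k!)^{w[\xi]}$, $\|f\|_\infty^k$, $\l^{(d-1)/(d+1)}$ together with the integral of the exponential decay yields the stated bound.

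The main obstacle is the combinatorial bookkeeping: one must verify that every contribution from an $M_\l^{|K_i|}$ factor can be re-organized into a single outer sum over partitions $\{L_1,\ldots,L_p\}$ of $\setk$, with the factor-$\l$'s from the singular differentials exactly cancelling the Jacobians of $\sT_\l$, so that precisely the single factor $\l^{(d-1)/(d+1)}$ from the anchor block survives. A second, more delicate point is confirming that the cluster estimate from Lemma~\ref{lem2:ExpClustering}, which is only stated for a single two-block partition, actually controls all relevant sub-partitions $\{S',T'\}$ arising from~\eqref{eq:DecomCumulant} on $\sigma(\{S,T\})$, with the separation $\d$ agreeing with $\d(\bx)$ up to constants.
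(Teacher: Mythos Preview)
Your outline follows the paper's strategy closely, but there is a genuine gap in your treatment of the moment factors $m_\l(\bx_{K_i}')$. Bounding them by H\"older's inequality together with Lemma~\ref{lem:MomentsAllOrders}(i) yields only the uniform estimate $c^{|K_i|}(|K_i|!)^{w[\xi]}$, with \emph{no exponential decay in the height coordinates} $\{h_j:j\in K_i\}$. The cluster estimate of Lemma~\ref{lem2:ExpClustering} supplies decay only in the heights belonging to $S'\cup T'$; the remaining heights, those in the blocks $K_1,\ldots,K_m$, must acquire their decay from the bound on $m_\l(\bx_{K_i}')$. Without it, the height integrals $\int_0^\infty\dint h_j$ for $j\in K_i$ are infinite (or, if you keep the cut-off at $\l^{2/(d+1)}$, they produce extra powers of $\l$ that destroy the claimed factor $\l^{(d-1)/(d+1)}$). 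The paper obtains the missing decay by conditioning on the event that every $x_j'$ with $j\in K_i$ is extreme in $\Psi^{(\l)}$ and invoking~\eqref{eq:ExtremExpDecay2}, exactly as in the proof of Lemma~\ref{lem2:ExpClustering}; this gives
\[
|m_\l(\bx_{K_i}')|\le c\,c^{|K_i|}\,|K_i|\,(|K_i|!)^{w[\xi]}\exp\big(-c\max\{h_j:j\in K_i\}\big),
\]
and the block-wise maxima then sum to at least $\max\{h_1,\ldots,h_k\}$. Note that this step also produces an extra factor $|K_1|\cdots|K_m|$, which the paper controls via Lemma~\ref{lem:Kombinatorik}.

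Your second ``delicate point'' is in fact easy: since $S'\subseteq S$ and $T'\subseteq T$ in the decomposition~\eqref{eq:DecomCumulant}, one has $d(v_{S'},v_{T'})\ge d(v_S,v_T)=\d(\bx)$ on $\sigma(\{S,T\})$, so the separation in Lemma~\ref{lem2:ExpClustering} dominates $\d(\bx)$ directly.
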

\begin{proof}
We combine \eqref{eq:DecomCumulant} with the definition of the singular differential to see that
\begin{align*}
\int_{\sigma(\{S,T\})}\dint c_\l^k\,f^{\s k} &= \sum_{S',T',K_1,\ldots,K_m\preceq\setk}c(S',T',K_1,\ldots,K_m)\int_{\sigma(\{S,T\})}\tilde\dint_\l\bx\,f(\bx)\\
&\qquad\qquad\qquad\times(m_\l(\bx_{S'\cup T'})-m_\l(\bx_{S'})m_\l(\bx_{T'}))\,m_\l(\bx_{K_1})\cdots m_\l(\bx_{K_m})\,,
\end{align*}
where we also used that for each set $L\in\{S',T',S'\cup T',K_1,\ldots,K_m\}$, $m_\l(\bx_L)$ is the density of the moment measure $M_\l^{|L|}$. Now, Lemma \ref{lem2:ExpClustering} shows that
\begin{align*}
&|m_\l(\bx_{S'\cup T'})-m_\l(\bx_{S'})m_\l(\bx_{T'})| \\
&\qquad\leq c_{53}\,c_{54}^k\,k\,(|S'\cup T'|!)^{w[\xi]}\,\exp\big(-c_{55}\max\{d(v_{S'},v_{T'}),\max\{h_r:r\in S'\cup T'\}\}\big)\,,
\end{align*}
where, as usual, $\sT_\l(x_1)=(v_1,h_1),\ldots,\sT_\l(x_k)=(v_k,h_k)$.
Furthermore, conditioning on the event that for each $j\in K_i$, $\sT_\l(x_j)\in\ext(\Psi^{(\l)})$, $i\in\{1,\ldots,s\}$, we conclude similarly as in the proof of Lemma \ref{lem2:ExpClustering} that
$$
|m_\l(\bx_{K_i})| \leq c_{56}\,c_{57}^{|K_i|}\,|K_i|\,(|K_i|!)^{w[\xi]}\,\exp\big(-c_{58}\max\{h_j:j\in K_i\}\big)\,.
$$
Next, since $S'\subseteq S$ and $T'\subseteq T$, we necessarily have that $d(v_{S'},v_{T'})\geq d(v_S,v_T)$. Moreover,
$$
\max\{h_r:r\in S'\cup T'\}+\max\{h_j:j\in K_1\}+\ldots+\max\{h_j:j\in K_m\}\geq \max\{h_1,\ldots,h_k\}
$$
and thus
\begin{align*}
&\big|(m_\l(\bx_{S'\cup T'})-m_\l(\bx_{S'})m_\l(\bx_{T'}))\,m_\l(\bx_{K_1})\cdots m_\l(\bx_{K_m})\big|\\
&\qquad\leq c_{59}\,c_{60}^k\,k\,|K_1|\cdots|K_m|\,(|S'\cup T'|!)^{w[\xi]}\,(|K_1|!)^{w[\xi]}\cdots(|K_m|!)^{w[\xi]}\\
&\qquad\qquad\qquad\qquad\times\exp\big(-c_{61}\max\{d(v_S,v_T),h_1,\ldots,h_k\}\big)\,.
\end{align*}
Now, we use Lemma \ref{lem:Kombinatorik} to see that $|K_1|\cdots|K_m|\leq 4\cdot 3^k$ and we also use that
$$
(|S'\cup T'|!)^{w[\xi]}\,(|K_1|!)^{w[\xi]}\cdots(|K_m|!)^{w[\xi]}\leq (k!)^{w[\xi]}\,.
$$
This leads to
\begin{align*}
&\big|(m_\l(\bx_{S'\cup T'})-m_\l(\bx_{S'})m_\l(\bx_{T'}))\,m_\l(\bx_{K_1})\cdots m_\l(\bx_{K_m})\big|\\
&\qquad\qquad\qquad\leq c_{62}\,c_{63}^k\,k\,(k!)^{w[\xi]}\,\exp\big(-c_{64}\max\{d(v_S,v_T),h_1,\ldots,h_k\}\big)\,.
\end{align*}
Together with \eqref{eq:EstimateCoeffDecom} we have that
$$
\Bigg|\int_{\sigma(\{S,T\})}\dint c_\l^k\,f^{\s k} \Bigg|\leq c_{65}\,c_{66}^k\,(k!)^{w[\xi]+1}\,\|f\|_\infty^k\,k\,\int_{(\BBd)^k}\tilde\dint_\l\bx\,\exp\big(-c_{67}\max\{\d(\bx),h_1,\ldots,h_k\}\big)\,.
$$
What is left is to bound the integral over $(\BBd)^k$ appearing in the last expression. To evaluate it, we can and will assume without loss of generality that the point $x_1$ is mapped onto $(0,h_1)\in\cR_\l$ under $\sT_\l$ (this is possible after a suitable rotation of $\eta_\l$). Using this together with the definition of the singular differential, we conclude that
\begin{align*}
&\int_{(\BBd)^k}\tilde\dint_\l\bx\,\exp\big(-c_{67}\max\{\d(\bx),h_1,\ldots,h_k\}\big)\\
&\qquad=\sum_{L_1,\ldots,L_p\preceq\setk}\l^p\int_{(\BBd)^p}\dint(x_1,\ldots,x_p)\,\exp\big(-c_{67}\max\{d(0,v_2,\ldots,v_p),h_1,\ldots,h_p\}\big)\,.
\end{align*}
We now introduce spherical coordinates for $x_1$ and use the definition of the scaling transformation $\sT_\l$ for $x_2,\ldots,x_p$. For the differential elements $\dint x_1,\ldots,\dint x_p$ this means that
$$
\dint x_1=(1-\l^{-{2\over d+1}}h)^{d-1}\l^{-{2\over d+1}}\,\dint h_1\cH_{\SSd}^{d-1}(\dint u)
$$
and that
$$
\dint x_i=(\l^{-{1\over d+1}})^{d-1}\l^{-{2\over d+1}}\,\dint h_i\dint v_i\,,\qquad i\in\{2,\ldots,p\}\,.
$$
Together with the observation that $p-{2p\over d+1}-{1\over d+1}(d-1)(p-1)={d-1\over d+1}$, we see that
\begin{align*}
&\l^p\int_{(\BBd)^p}\dint(x_1,\ldots,x_p)\,\exp\big(-c_{67}\max\{d(0,v_2,\ldots,v_p),h_1,\ldots,h_p\}\big)\\
& = \l^p\int_{\SSd}\cH_{\SSd}^{d-1}(\dint u)\int_0^{\l^{2\over d+1}}\dint h_1\ldots\int_0^{\l^{2\over d+1}}\dint h_p\int_{\sT_\l(\SSd)}\dint v_2\ldots\int_{\sT_\l(\SSd)}\dint v_p\\
&\qquad\qquad \exp\big(-c_{67}\max\{d(0,v_2,\ldots,v_p),h_1,\ldots,h_p\}\big)\,(1-\l^{-{2\over d+1}}h)^{d-1}\l^{-{1\over d+1}(d-1)(p-1)}\l^{-{2p\over d+1}}\\
&\leq \l^{d-1\over d+1}\int_{\SSd}\cH_{\SSd}^{d-1}(\dint u)\int_0^{\infty}\dint h_1\ldots\int_0^{\infty}\dint h_p\int_{(\RR^{d-1})^{p-1}}\dint{\bf v}\\
&\qquad\qquad\exp\big(-c_{67}\max\{d(0,{\bf v}),h_1,\ldots,h_p\}\big)\,.
\end{align*}
This yields the result.
\end{proof}

Fix from now on and until Lemma \ref{lem7:IntEstimate} a partition $\{L_1,\ldots,L_p\}$ of $\setk$. Our next goal is to bound the integral
$$
\int_{(\RR^{d-1})^{p-1}}\dint{\bf v}\,\exp\big(-c\,\max\{\d(0,{\bf v}),h_1,\ldots,h_p\}\big)
$$
that has shown up in Lemma \ref{lem4:OffDiagonal}, where $c\in(0,\infty)$ is a constant only depending on $d$ and on $\xi$. We put $a:=\max\{h_1,\ldots,h_p\}$ and write
\begin{align}
\nonumber & \int_{(\RR^{d-1})^{p-1}}\dint{\bf v}\,\exp\big(-c\,\max\{\d(0,{\bf v}),h_1,\ldots,h_p\}\big)\\
&\qquad=\int_{\{\d(0,{\bf v})\leq a\}}\dint{\bf v}\,\exp(-c\,a) + \int_{\{\d(0,{\bf v})>a\}}\dint{\bf v}\,\exp(-c\,\d(0,{\bf v})) =: T_1 + T_2\,.\label{eq:defT1T2}
\end{align}

\begin{lemma}\label{lem5:T1}
For $T_1$ defined at \eqref{eq:defT1T2} we have that
$$
T_1 \leq \exp(-c\,a)\,p^{p-2}\,\kappa_{d-1}^{p-1}\,a^{(p-1)(d-1)}\,.
$$
\end{lemma}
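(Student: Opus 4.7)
The plan is to interpret the event $\{\delta(0,\mathbf{v})\leq a\}$ combinatorially as a connectivity condition on a geometric graph, and then apply a spanning-tree bound together with Cayley's formula.

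First, I would observe that for $\mathbf{v}=(v_2,\ldots,v_p)$ (writing $v_1:=0$), the inequality $\delta(0,\mathbf{v})\leq a$ holds precisely when no non-trivial partition $\{S,T\}\preceq\{1,\ldots,p\}$ satisfies $\min_{s\in S,t\in T}\|v_s-v_t\|>a$. Equivalently, the graph $G(\mathbf{v})$ on vertex set $\{1,\ldots,p\}$ with edges $\{i,j\}$ whenever $\|v_i-v_j\|\leq a$ is connected. Consequently
\[
\{\mathbf{v}\in(\RR^{d-1})^{p-1}:\delta(0,\mathbf{v})\leq a\}\;\subseteq\;\bigcup_{\tau}A_\tau,
\]
where the union runs over all spanning trees $\tau$ on $\{1,\ldots,p\}$ and $A_\tau:=\{\mathbf{v}:\|v_i-v_j\|\leq a\text{ for every edge }\{i,j\}\in\tau\}$.

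Next, I would bound the volume of a single $A_\tau$ by rooting $\tau$ at vertex $1$ (so $v_1=0$) and orienting all edges away from the root. Each of the $p-1$ non-root vertices has a unique parent $\pi(i)$, and by integrating the coordinates $v_2,\ldots,v_p$ in any topological order compatible with $\tau$, the constraint on $v_i$ given $v_{\pi(i)}$ is simply $v_i\in\BB^{d-1}(v_{\pi(i)},a)$, a ball of Lebesgue volume $\kappa_{d-1}a^{d-1}$. Thus
\[
\vol(A_\tau)\;\leq\;\bigl(\kappa_{d-1}\,a^{d-1}\bigr)^{p-1}=\kappa_{d-1}^{p-1}\,a^{(p-1)(d-1)}.
\]

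Finally, Cayley's formula tells us that the number of spanning trees on $p$ labelled vertices equals $p^{p-2}$. Applying the union bound over $\tau$ and pulling out the constant factor $\exp(-ca)$ from the definition of $T_1$ yields
\[
T_1\;\leq\;\exp(-ca)\,\cdot\,p^{p-2}\,\cdot\,\kappa_{d-1}^{p-1}\,a^{(p-1)(d-1)},
\]
which is the claimed estimate. There is no genuine obstacle here; the only subtlety is the equivalence between $\delta(0,\mathbf{v})\leq a$ and connectedness of the geometric graph, after which the rest is an elementary spanning-tree counting argument.
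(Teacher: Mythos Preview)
Your proof is correct and follows essentially the same approach as the paper: both identify the condition $\delta(0,\mathbf{v})\leq a$ with connectedness of the geometric graph on $\{v_1,\ldots,v_p\}$ with threshold $a$, then bound via a union over spanning trees, estimate each tree's volume as $(\kappa_{d-1}a^{d-1})^{p-1}$, and invoke Cayley's formula. Your presentation is in fact slightly more careful, since you phrase the covering as an inclusion and apply a union bound, whereas the paper writes an equality that should strictly be an inequality.
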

\begin{proof}
Suppose that $\d(0,{\bf v})\leq a$. Then there is obviously no partition $\{S,T\}$ of $\{0,v_2,\ldots,v_p\}$ such that the corresponding separation $d({\bf v}_S,{\bf v}_T)$ is bigger than $a$. This implies that there exists a tree $\cT$ on $\{1,\ldots,p\}$ such that adjacent vertices $v_i,v_j$ in $\cT$ satisfy $\|v_i-v_j\|\leq a$. We indicate this property by writing $(0,{\bf v})\Yleft (a,\cT)$ and thus have
\begin{align*}
\int_{\{\d(0,{\bf v})\leq a\}}\dint{\bf v}&=\sum_{\cT\in{\rm Trees}(\setp)}\int_{\{(0,{\bf v})\Yleft (a,\cT)\}}\dint{\bf v}\\
&= \sum_{\cT\in{\rm Trees}(\setp)}V_{(d-1)(p-1)}(\{{\bf v}\in(\RR^{d-1})^{p-1}:(0,{\bf v})\Yleft (a,\cT)\})\,,
\end{align*}
where the sums run over all trees $\cT$ on the set $\setp$. By the geometry of these trees mentioned above, we have that
$$
V_{(d-1)(p-1)}(\{{\bf v}\in(\RR^{d-1})^{p-1}:(0,{\bf v})\Yleft (a,\cT)\})\leq (a^{d-1}\kappa_{d-1})^{p-1}\,.
$$
Moreover, by Caley's theorem there are exactly $p^{p-2}$ trees on $\setp$. This yields
$$
\int_{\{\d(0,{\bf v})\leq a\}}\dint{\bf v}\leq p^{p-2}\,(a^{d-1}\kappa_{d-1})^{p-1}\,.
$$
Multiplication with $\exp(-c\,a)$ completes the proof.
\end{proof}

\begin{lemma}\label{lem6:T2}
For $T_2$ defined at \eqref{eq:defT1T2} we have that
$$
T_2 \leq p^{p-2}\,\kappa_{d-1}^{p-1}\sum_{i=0}^{(d-1)(p-1)}{((d-1)(p-1))_{(i)}\over c^i}\,\exp(-c\,a)\,a^{(d-1)(p-1)-i}\,.
$$
\end{lemma}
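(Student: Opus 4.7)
The plan is to reduce $T_2$ to a one-dimensional integral via a layer-cake-type argument, then apply the volume bound established in the proof of Lemma \ref{lem5:T1} together with the explicit antiderivative computation of Lemma \ref{lem1:Integral}.

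First I would rewrite the exponential weight as
$$
\exp(-c\,\d(0,{\bf v})) \;=\; c\int_{\d(0,{\bf v})}^{\infty}\exp(-c\,t)\,\dint t,
$$
and swap the order of integration by Fubini. Since integration is over $\{\d(0,{\bf v})>a\}$, this gives
$$
T_2 \;=\; c\int_a^{\infty}\exp(-c\,t)\,V_{(d-1)(p-1)}\big(\{{\bf v}\in(\RR^{d-1})^{p-1}:a<\d(0,{\bf v})\leq t\}\big)\,\dint t,
$$
and enlarging the set in the volume to $\{\d(0,{\bf v})\leq t\}$ only increases the integrand.

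Next I would invoke the volume estimate that was derived inside the proof of Lemma \ref{lem5:T1}: exactly the same tree/Cayley argument applies with $a$ replaced by $t$, yielding
$$
V_{(d-1)(p-1)}\big(\{{\bf v}:\d(0,{\bf v})\leq t\}\big) \;\leq\; p^{p-2}\,\kappa_{d-1}^{p-1}\,t^{(d-1)(p-1)}.
$$
Plugging this bound into the above display reduces the task to estimating the scalar integral
$$
c\,p^{p-2}\,\kappa_{d-1}^{p-1}\int_a^{\infty}t^{(d-1)(p-1)}\exp(-c\,t)\,\dint t.
$$

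Finally I would evaluate this integral by a direct appeal to Lemma \ref{lem1:Integral} with $b=c$ and exponent $(d-1)(p-1)$, which produces the sum
$$
c\int_a^{\infty}t^{(d-1)(p-1)}\exp(-c\,t)\,\dint t \;=\; \sum_{i=0}^{(d-1)(p-1)}\frac{((d-1)(p-1))_{(i)}}{c^{i}}\exp(-c\,a)\,a^{(d-1)(p-1)-i}
$$
after the reindexing $i\leftarrow i-1$ and cancellation of the outer factor $c$. Multiplying by $p^{p-2}\kappa_{d-1}^{p-1}$ yields precisely the claimed bound. No step presents a real obstacle: the only thing to check is that the volume estimate from the proof of Lemma \ref{lem5:T1} is formulated as a bound on the Lebesgue measure of $\{{\bf v}:\d(0,{\bf v})\leq r\}$, so that it can be applied pointwise in $r=t$ under the outer $\dint t$-integral.
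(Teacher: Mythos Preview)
Your proposal is correct and follows essentially the same approach as the paper: both rewrite $\exp(-c\,\d(0,{\bf v}))$ as a tail integral, apply Fubini, enlarge the inner region to $\{\d(0,{\bf v})\leq t\}$, invoke the tree/Cayley volume bound from the proof of Lemma \ref{lem5:T1}, and finish with Lemma \ref{lem1:Integral} plus the same reindexing.
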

\begin{proof}
We rewrite $T_2$ as
\begin{align*}
T_2  &= c\int_{\{\d(0,{\bf v})>a\}}\dint{\bf v}\int_{\d(0,{\bf v})}^\infty\dint t\, \exp(-c\,t) = c\int_a^\infty\dint t\int_{\{a<\d(0,{\bf v})<t\}}\dint{\bf v}\, \exp(-c\,t)\\
&\leq c\int_a^\infty\dint t\int_{\{\d(0,{\bf v})<t\}}\dint{\bf v}\, \exp(-ct)\,.
\end{align*}
Arguing in the same manner as in the proof of Lemma \ref{lem5:T1} we see that
$$
T_2\leq c\,p^{p-2}\kappa_{d-1}^{p-1}\int_a^\infty\dint t\,t^{(d-1)(p-1)}\,\exp(-c\,t)\,.
$$
Applying Lemma \ref{lem1:Integral} to the last integral yields that
\begin{align*}
T_2 &\leq c\,p^{p-2}\,\kappa_{d-1}^{p-1}\,\sum_{i=1}^{(d-1)(p-1)+1}{((d-1)(p-1))_{(i-1)}\over c^i}\,\exp(-ca)\,a^{(d-1)(p-1)-(i-1)}\\
&=c\,p^{p-2}\,\kappa_{d-1}^{p-1}\,\sum_{i=0}^{(d-1)(p-1)}{((d-1)(p-1))_{(i)}\over c^{i+1}}\,\exp(-ca)\,a^{(d-1)(p-1)-i}\\
&=p^{p-2}\,\kappa_{d-1}^{p-1}\,\sum_{i=0}^{(d-1)(p-1)}{((d-1)(p-1))_{(i)}\over c^{i}}\,\exp(-ca)\,a^{(d-1)(p-1)-i}\,.
\end{align*}
This completes the proof.
\end{proof}

\begin{remark}\label{rem:Integral}\rm 
The proof of Lemma \ref{lem6:T2} shows why in Lemma \ref{lem2:ExpClustering} the exponential estimates \eqref{eq:ExpStabilization2} and \eqref{eq:ExtremExpDecay2} are used instead of \eqref{eq:ExpStabilization} and \eqref{eq:ExtremExpDecay}, respectively. Using the latter estimates would lead to
$$
T_2\leq c\,p^{p-2}\kappa_{d-1}^{p-1}\int_a^\infty\dint t\,t^{(d-1)(p-1)\over d+1}\,\exp(-c\,t)\,.
$$
However, we do not know about a closed form expression for the last integral that could be used in the further steps of our proof.
\end{remark}

Combining Lemma \ref{lem5:T1} and Lemma \ref{lem6:T2} we conclude that
\begin{align}\label{eq:T1+T2}
\nonumber &\int_{(\RR^{d-1})^{p-1}}\dint{\bf v}\,\ext\big(-c\,\max\{\d(0,{\bf v}),h_1,\ldots,h_p\}\big)\\
\nonumber &\leq p^{p-2}\,\kappa_{d-1}^{p-1}\Big(\exp(-ca)\,a^{(d-1)(p-1)}+\sum_{i=0}^{(d-1)(p-1)}{((d-1)(p-1))_{(i)}\over c^{i}}\,\exp(-ca)\,a^{(d-1)(p-1)-i}\Big)\\
&\leq 2p^{p-2}\,\kappa_{d-1}^{p-1}\sum_{i=0}^{(d-1)(p-1)}{((d-1)(p-1))_{(i)}\over c^i}\,\exp(-c\,a)\,a^{(d-1)(p-1)-i}
\end{align}
for the innermost integral appearing in Lemma \ref{lem4:OffDiagonal}. We now carry out the integration with respect to the height coordinates $h_1,\ldots,h_p$.

\begin{lemma}\label{lem7:IntEstimate}
We have that
\begin{align*}
&\int_0^\infty\dint h_1\ldots\int_0^\infty\dint h_p\int_{(\RR^{d-1})^{p-1}}\dint{\bf v}\,\exp\big(-c\,\max\{\d(0,{\bf v}),h_1,\ldots,h_p\}\big)\\
&\qquad\leq c_{68}\,c_{69}^p\,p^{2(p-1)}\,(dp)!\,.
\end{align*}
\end{lemma}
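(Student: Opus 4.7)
My plan is to feed the bound \eqref{eq:T1+T2} into the $h$-integration, reduce the $p$-dimensional integral over the heights to a one-dimensional integral via a symmetry argument, evaluate the resulting Gamma-type integrals explicitly, and then collapse the remaining combinatorial sum with the hockey-stick identity.

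First I will note the elementary identity
\begin{equation*}
\int_0^\infty\cdots\int_0^\infty g(\max\{h_1,\ldots,h_p\})\,\dint h_1\cdots\dint h_p = p\int_0^\infty g(a)\,a^{p-1}\,\dint a\,,
\end{equation*}
which follows by symmetrising over which coordinate realises the maximum and observing that on $\{h_1=\max\}$ the other coordinates range freely in $[0,h_1]$. Applying this with $g(a)=\exp(-ca)\,a^{(d-1)(p-1)-i}$ and using the standard evaluation $\int_0^\infty a^n\exp(-ca)\,\dint a = n!/c^{n+1}$, the bound \eqref{eq:T1+T2} combined with integration in $h_1,\ldots,h_p$ yields
\begin{equation*}
\int_0^\infty\!\!\dint h_1\!\cdots\!\int_0^\infty\!\!\dint h_p\int_{(\RR^{d-1})^{p-1}}\!\dint\mathbf{v}\,\exp\bigl(-c\max\{\delta(0,\mathbf{v}),h_1,\ldots,h_p\}\bigr)
\leq \frac{2\,p^{p-1}\,\kappa_{d-1}^{p-1}}{c^{(d-1)(p-1)+p}}\,S\,,
\end{equation*}
where
\begin{equation*}
S := \sum_{i=0}^{(d-1)(p-1)}\bigl((d-1)(p-1)\bigr)_{(i)}\cdot\bigl((d-1)(p-1)+p-1-i\bigr)!\,.
\end{equation*}

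Second, I will simplify $S$ in closed form. Writing $n:=(d-1)(p-1)$ and substituting $j=n-i$ gives $S = n!\sum_{j=0}^n (j+p-1)!/j!$. Since $(j+p-1)!/j!=(p-1)!\binom{j+p-1}{p-1}$, the hockey-stick identity $\sum_{j=0}^n\binom{j+p-1}{p-1}=\binom{n+p}{p}$ yields
\begin{equation*}
S = n!\,(p-1)!\,\binom{n+p}{p} = \frac{(n+p)!}{p} = \frac{((d-1)(p-1)+p)!}{p}\leq\frac{(dp)!}{p}\,,
\end{equation*}
because $(d-1)(p-1)+p = dp-d+1\leq dp$. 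Substituting this back gives the bound
\begin{equation*}
\frac{2\,p^{p-2}\,\kappa_{d-1}^{p-1}}{c^{(d-1)(p-1)+p}}\,(dp)!\,,
\end{equation*}
and absorbing the powers of $\kappa_{d-1}$ and $c^{-1}$ into a constant of the form $c_{68}c_{69}^p$ (both depending only on $d$ and $\xi$) comfortably produces the claimed estimate since $p^{p-2}\leq p^{2(p-1)}$ for $p\geq 1$.

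The only real technical point is the combinatorial simplification: the crude per-term bounds $(j+p-1)!/j!\leq (dp)^{p-1}$ would lose an extra factor of roughly $p^p$, leaving an exponent of $p$ like $p^{2p-1}$ that just barely misses the target $p^{2(p-1)}$. Applying the hockey-stick identity to get a telescoped closed form is what keeps the exponent of $p$ polynomial and well inside $p^{2(p-1)}$; everything else is routine bookkeeping on the constants coming from \eqref{eq:T1+T2} and the Gamma integrals.
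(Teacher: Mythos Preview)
Your proof is correct and takes a genuinely different---and cleaner---route than the paper. The paper integrates in $h_1,\ldots,h_p$ one variable at a time, each time splitting $\int_0^\infty$ at the running maximum $a_j=\max\{h_{j+1},\ldots,h_p\}$ and invoking Lemma~\ref{lem1:Integral}; this produces a nested $p$-fold sum of products of falling factorials, which is then bounded term-by-term by $(d(p-1))!$ and the number of terms by $(dp)^p$, yielding the factor $p^{2(p-1)}(dp)!$. You instead collapse the $p$-fold height integral in one step via the identity $\int_{[0,\infty)^p}g(\max_i h_i)\,\dint h=p\int_0^\infty g(a)\,a^{p-1}\,\dint a$, and then close the resulting single sum $S$ exactly with the hockey-stick identity, obtaining $S=(n+p)!/p\leq(dp)!/p$.

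What each approach buys: the paper's iterative argument is entirely elementary (no combinatorial identities beyond bounding falling factorials), but it is laborious and loses a factor of roughly $p^{p}$. Your symmetry-plus-hockey-stick argument is shorter and sharper: it actually delivers $p^{p-2}(dp)!$ rather than $p^{2(p-1)}(dp)!$. Tracing this through Lemma~\ref{lem8:OffDiagEst} and the proof of Proposition~\ref{prop:CumulantEstimate} would in fact shave one power of $k!$ off the cumulant bound (giving exponent $d+w[\xi]+3$ instead of $d+w[\xi]+4$), which would marginally improve all the downstream theorems---though, as you note, you simply pad up to $p^{2(p-1)}$ to match the stated lemma.
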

\begin{proof}
From \eqref{eq:T1+T2} we conclude that
\begin{equation}\label{eq:Lem7Step}
\begin{split}
&\int_0^\infty\dint h_1\ldots\int_0^\infty\dint h_p\int_{(\RR^{d-1})^{p-1}}\dint{\bf v}\,\exp\big(-c\,\max\{\d(0,{\bf v}),h_1,\ldots,h_p\}\big)\\
&\leq 2p^{p-2}\kappa_{d-1}^{p-1}\sum_{i_1=0}^{(d-1)(p-1)}{((d-1)(p-1))_{(i_1)}\over c^{i_1}}\int_{0}^\infty\dint h_1\ldots\int_0^\infty\dint h_p\,\exp(-c\,a)\,a^{(d-1)(p-1)-{i_1}}\,,
\end{split}
\end{equation}
where, recall, $a=\max\{h_1,\ldots,h_p\}$. Define $a_1:=\max\{h_2,\ldots,h_p\}$ and note that
\begin{align*}
&\int_0^\infty\dint h_1\,\exp(-c\,a)\,a^{(d-1)(p-1)-{i_1}}\\
&\qquad = \int_0^{a_1}\dint h_1\,\exp(-c\,a_1)\,a_1^{(d-1)(p-1)-{i_1}}+\int_{a_1}^\infty\dint h_1\,\exp(-c\,h_1)\,h_1^{(d-1)(p-1)-{i_1}}\,.
\end{align*}
The first integral is just $\exp(-c\,a_1)\,a_1^{(d-1)(p-1)-{i_1}+1}$ and the second one can be evaluated by means of Lemma \ref{lem1:Integral}. This yields
\begin{align*}
&\int_0^\infty\dint h_1\,\exp(-c\,a)\,a^{(d-1)(p-1)-{i_1}}\\
&\qquad\leq \exp(-c\,a_1)\,a_1^{(d-1)(p-1)-{i_1}+1}\\
&\qquad\qquad\qquad+\sum_{i_2=1}^{(d-1)(p-1) - i_1 +1} \frac{((d-1)(p-1) - i_1)_{(i_2 -1)}}{c^{i_2}} \exp(-c\, a_1)\, a_1^{(d-1)(p-1) - i_1 - (i_2-1)}\\
&\qquad =\sum_{i_2=0}^{(d-1)(p-1) - i_1 +1} \frac{((d-1)(p-1) - i_1)_{(i_2 -1)}}{c^{i_2}} \exp(-c\, a_1)\, a_1^{(d-1)(p-1) - (i_1 + i_2) +1}\,.
\end{align*}
In a next step we integrate the individual summands appearing in the last line with respect to $h_2$. Putting $a_2:=\max\{h_2,\ldots,h_p\}$ we conclude, similarly as above, that
\begin{align*}
&\int_{0}^{\infty}\dint h_2\,\exp(-c\, a_1) a_1^{(d-1)(p-1) - (i_1 + i_2) + 1} \\
&=\int_{0}^{a_2}\dint h_2\,\exp(-c \, a_2)\, a_2^{(d-1)(p-1) - (i_1 + i_2) + 1} +  \int_{a_2}^{\infty}\dint h_2\,\exp(-c \, h_2) \, h_2^{(d-1)(p-1) - (i_1 + i_2) + 1} dh_2\\
&= \sum_{i_3=0}^{(d-1)(p-1) - (i_1 + i_2) +2} \frac{((d-1)(p-1) - (i_1 + i_2) + 1)_{(i_3 -1)}}{c^{i_3}}\, \exp(-c \, a_2) a_2^{(d-1)(p-1) - (i_1 + i_2 + i_3) +2}\,.
\end{align*}
This procedure can now be repeated $p-3$ further times and in the very last step the resulting integral
$$
\mathcal{J}:=\int_0^\infty\dint h_p\,\exp(-c\,h_p)\,h_p^{(d-1)(p-1)-(i_1+\ldots+i_p)+(p-1)}
$$
can be evaluated explicitly:
\begin{align*}
\mathcal{J} &= {((d-1)(p-1)-(i_1+\ldots+i_p)+(p-1))!\over c^{(d-1)(p-1)-(i_1+\ldots+i_p)+(p-1)}}\int_0^\infty\dint h_p\,\exp(-c\,h_p)\\
&={((d-1)(p-1)-(i_1+\ldots+i_p)+(p-1))!\over c^{(d-1)(p-1)-(i_1+\ldots+i_p)+p}}\,.
\end{align*}
This leads in view of \eqref{eq:Lem7Step} to
\begin{align*}
&\int_0^\infty\dint h_1\ldots\int_0^\infty\dint h_p\int_{(\RR^{d-1})^{p-1}}\dint{\bf v}\,\exp\big(-c\,\max\{\d(0,{\bf v}),h_1,\ldots,h_p\}\big)\\
&\leq 2p^{p-2}\,\kappa_{d-1}^{p-1}\sum_{i_1=0}^{(d-1)(p-1)}{((d-1)(p-1))_{(i_1)}\over c^{i_1}}\sum_{i_2=0}^{(d-1)(p-1)-i_1+1}{((d-1)(p-1)-i_1)_{(i_2-1)}\over c^{i_2}}\\
&\quad\times\ldots\times\sum_{i_p=0}^{(d-1)(p-1)-(i_1+\ldots+i_{p-1})+(p-1)}{((d-1)(p-1)-(i_1+\ldots+i_{p-1})+(p-2))_{(i_p-1)}\over c^{i_p}}\\
&\quad\times{((d-1)(p-1)-(i_1+\ldots+i_p)+(p-1))!\over c^{(d-1)(p-1)-(i_1+\ldots+i_p)+p}}\\
& \leq c_{70}\,c_{71}^p\,p^{p-2} \sum_{i_1=0}^{(d-1)(p-1)} ((d-1)(p-1))_{(i_1)} \sum_{i_2=0}^{(d-1)(p-1) - i_1 +1} ((d-1)(p-1) - i_1)_{(i_2 -1)}\\
&\quad \times \cdots \times \sum_{i_p=0}^{(d-1)(p-1) - (i_1 + \cdots + i_{p-1}) +(p-1)} ((d-1)(p-1) - (i_1 + \cdots + i_{p-1}) + (p-2))_{(i_p -1)}\\
&\quad \times \left((d-1)(p-1) - (i_1 + \cdots + i_p) +(p-1)\right)!
\end{align*}
with $c_{70}=2$ and $1/c^{(d-1)(p-1)+p}\leq 1/c^p=:c_{71}^p$.
Now, we expand and notice that each of the resulting terms is a product of $p+1$ falling factorials. From the structure of these $p+1$ factors it follows that each product that shows up this way is bounded from above by $((d-1)(p-1) + (p-1))! = (d(p-1))!$ (this corresponds precisely to the last factorial with $i_1=\cdots =i_p=0$). Thus,
\begin{align*}
&\int_0^\infty\dint h_1\ldots\int_0^\infty\dint h_p\int_{(\RR^{d-1})^{p-1}}\dint{\bf v}\,\exp\big(-c\,\max\{\d(0,{\bf v}),h_1,\ldots,h_p\}\big)\\
&\leq c_{71}\,c_{72}^p \, p^{p-2} \,(d(p-1))! \sum_{i_1=0}^{(d-1)(p-1)} \sum_{i_2=0}^{(d-1)(p-1) - i_1 +1} \cdots \sum_{i_p=0}^{(d-1)(p-1) - (i_1 + \cdots + i_{p-1}) +(p-1)} 1\\
&\leq c_{71}\,c_{72}^p \,p^{p-2} \,(d(p-1))! \sum_{i_1=0}^{(d-1)(p-1)} \sum_{i_2=0}^{(d-1)(p-1) +1} \cdots \sum_{i_p=0}^{(d-1)(p-1) +(p-1)} 1\\
&\leq c_{71}\,c_{72}^p \, p^{p-2} \, (d(p-1))! \left[(d-1)(p-1)+ 1\right] \left[(d-1)(p-1) +2\right]\cdots \left[(d-1)(p-1) +p\right]\\
&\leq c_{71}\,c_{72}^p \, p^{p-2} \, (d(p-1))!\, (dp)^{p}\\
&\leq c_{71}\,c_{73}^p\, p^{2(p-1)}\,(dp)!
\end{align*}
and we have completed the proof.
\end{proof}

Now, we use Lemma \ref{lem7:IntEstimate} to derive our final upper bound for the off-diagonal term in \eqref{eq:CumulantSumRep}.

\begin{lemma}\label{lem8:OffDiagEst}
For $\xi\in\Xi$ and $f\in\cB(\BBd)$ we have that
$$
\Bigg|\sum_{S,T\preceq\setk}\int_{\sigma(\{S,T\})}\dint c_\l^k\, f^{\s k}\Bigg| \leq c_{74}\,c_{75}^k\,\|f\|_\infty^k\,(k!)^{w[\xi]+2}\,k^{2k}\,(dk)!\,\l^{d-1\over d+1}
$$
for all $\l\geq c_{76}$.
\end{lemma}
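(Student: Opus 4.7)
The plan is to combine the two main lemmas already established in the previous subsection: Lemma~\ref{lem4:OffDiagonal}, which reduces the off-diagonal cumulant sum to a product of a combinatorial prefactor and an iterated integral, and Lemma~\ref{lem7:IntEstimate}, which bounds that iterated integral. Everything else will be routine bookkeeping of constants and crude uniform bounds in the parameter~$p$ (the number of blocks in the partitions~$L_1,\ldots,L_p\preceq\setk$ appearing in the singular differential~$\tilde\dint_\l\bx$).

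First I would start from the estimate provided by Lemma~\ref{lem4:OffDiagonal}, which already extracts the prefactor $c_{59}\,c_{51}^k\,k\,(k!)^{w[\xi]+1}\,\|f\|_\infty^k\,\l^{(d-1)/(d+1)}$ and the sum over partitions $\{L_1,\ldots,L_p\}$ of $\setk$ of the surface integral over $\SSd$ multiplied by the iterated integral in $(h_1,\ldots,h_p,{\bf v})$. The integral over $\SSd$ contributes only a dimensional constant $d\kappa_d$, so it can be absorbed into the overall constant. To each summand in the partition sum I would then apply Lemma~\ref{lem7:IntEstimate}, yielding the upper bound $c_{68}\,c_{69}^p\,p^{2(p-1)}\,(dp)!$ for the integral.

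The crude monotonicity bounds $p\leq k$, $c_{69}^p\leq c_{69}^k$ (assuming $c_{69}\geq 1$, which we may arrange), $p^{2(p-1)}\leq k^{2k}$ and $(dp)!\leq (dk)!$ make the summand independent of $p$, so the sum reduces to counting partitions of $\setk$. This count is the Bell number $B_k$, which satisfies $B_k\leq k!$, and this extra $k!$ is exactly what upgrades the exponent from $w[\xi]+1$ to $w[\xi]+2$ in the final estimate. The remaining factor of $k$ already present in Lemma~\ref{lem4:OffDiagonal} can be harmlessly absorbed into $k^{2k}$ or into the geometric constant $c_{75}^k$. Collecting the various constants into fresh names $c_{74},c_{75},c_{76}$ (the last controlling the threshold $\l\geq c_{76}$ inherited from Lemma~\ref{lem2:ExpClustering} via Lemma~\ref{lem4:OffDiagonal}) produces the stated bound.

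I do not anticipate a genuine obstacle here: all the analytic work has been done in Lemma~\ref{lem2:ExpClustering} (exponential clustering of the semi-cluster measures), Lemma~\ref{lem3:Diagonal} (which is not needed at this point) and in the chain of integral estimates culminating in Lemma~\ref{lem7:IntEstimate}. The only mild care needed is to make sure that the partition counts and the $p$-dependent factors are uniformly replaced by $k$-dependent ones in a way that does not damage the factorial structure, so that the final bound involves $(k!)^{w[\xi]+2}$ rather than, say, $(k!)^{w[\xi]+3}$; using the Bell-number bound at the last step is the cleanest way to achieve this.
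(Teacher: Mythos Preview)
Your proposal is correct and follows essentially the same route as the paper's proof: combine Lemma~\ref{lem4:OffDiagonal} with Lemma~\ref{lem7:IntEstimate}, evaluate the surface integral as $d\kappa_d$, bound the number of partitions of $\setk$ by $k!$ (which supplies the extra factor raising $(k!)^{w[\xi]+1}$ to $(k!)^{w[\xi]+2}$), and replace the $p$-dependent factors by their $k$-analogues via $p\leq k$. The paper absorbs the stray factor $k$ exactly as you suggest, writing $k\cdot p^{2(p-1)}\leq k\cdot k^{2(k-1)}=k^{2k-1}\leq k^{2k}$.
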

\begin{proof}
We combine Lemma \ref{lem4:OffDiagonal} with Lemma \ref{lem7:IntEstimate} to conclude that
\begin{align*}
&\Bigg|\sum_{S,T\preceq\setk}\int_{\sigma(\{S,T\})}\dint c_\l^k\, f^{\s k}\Bigg|\\
&\qquad\leq c_{77}\,c_{78}^k\,(k!)^{w[\xi]+1}\,\|f\|_\infty^k\,k\,\l^{d-1\over d+1}\sum_{L_1,\ldots,L_p\preceq\setk}\int_{\SSd}\cH_{\SSd}^{d-1}(\dint u)\,c_{79}\,c_{80}^p\,p^{2(p-1)}\,(dp)!\,.
\end{align*}
The number of terms in the sum is bounded by $k!$ and the integral over $\SSd$ equals $d\k_d$. Thus, using that $p\leq k$,
\begin{align*}
\Bigg|\sum_{S,T\preceq\setk}\int_{\sigma(\{S,T\})}\dint c_\l^k\, f^{\s k}\Bigg|&\leq c_{81}\,c_{82}^k\,\|f\|_\infty^k\,(k!)^{w[\xi]+2}\,k\,p^{2(p-1)}\,(dp)!\,\l^{d-1\over d+1}\\
&\leq c_{81}\,c_{83}^k\,\|f\|_\infty^k\,(k!)^{w[\xi]+2}\,k^{1+2(k-1)}\,(dk)!\,\l^{d-1\over d+1}\\
&\leq c_{81}\,c_{83}^k\,\|f\|_\infty^k\,(k!)^{w[\xi]+2}\,k^{2k}\,(dk)!\,\l^{d-1\over d+1}\,.
\end{align*}
This completes the argument.
\end{proof}

We are now prepared to show our crucial cumulant bound.

\begin{proof}[Proof of Proposition \ref{prop:CumulantEstimate}.]
From Lemma \ref{lem3:Diagonal} and Lemma \ref{lem8:OffDiagEst} we conclude that 
\begin{align*}
|\lan f^{\s k},c_\l^k\ran| & \leq\Bigg|\int_{\Delta} \dint c_\l^k\, f^{\s k}\Bigg|+\Bigg|\sum_{S,T\preceq\setk}\int_{\sigma(\{S,T\})}\dint c_\l^k\, f^{\s k}\Bigg|\\
&\leq c_{84}\,c_{85}^k\,\|f\|_\infty^k\,(k!)^{\max\{w[\xi]+2,v[\xi]\}}\,k^{2k}\,(dk)!\,\l^{d-1\over d+1}
\end{align*}
for all sufficiently large $\l$. Now, the elementary inequality $\ell^\ell\leq \ell!\,e^{3\ell}$, valid for $\ell\in\{3,4,\ldots\}$, yields that $k^{2k}\leq (k!)^2\,e^{6k}$. Furthermore, we have from Stirling's formula that
\begin{align*}
(dk)! \leq e\,(dk)^{dk+{1\over 2}}\,e^{-dk} &= e\,\sqrt{dk}\,e^{-dk}\,d^{dk}\,(k^k)^d \leq e\,\sqrt{dk}\,e^{-dk}\,d^{dk}\,e^{3dk}\,(k!)^d\\
&\leq e^{4dk}\,d^{dk}\,(k!)^d = (d^d\,e^{4d})^k\,(k!)^d\,.
\end{align*}
Altogether this implies that
\begin{align*}
|\lan f^{\s k},c_\l^k\ran| &\leq c_{86}\,c_{87}^k\,\|f\|_\infty^k\,(k!)^{d+\max\{v[\xi],w[\xi]+2\}+2}\,\l^{d-1\over d+1}\\
&\leq c_{86}\,c_{87}^k\,\|f\|_\infty^k\,(k!)^{d+w[\xi]+4}\,\l^{d-1\over d+1}
\end{align*}
for all $\l\geq c_{88}$ and completes the proof.
\end{proof}

\begin{remark}\rm 
Our proof shows that, in principle, our results in Section \ref{sec:MainResults} apply to any functional $\xi$ acting on pairs $(x,\eta_\l)$ with $x\in\eta_\l$ that satisfy the exponential localization property \eqref{eq:ExpStabilization}, the exponential decay property \eqref{eq:ExtremExpDecay}, which have moments of all orders as in Lemma \ref{lem:MomentsAllOrders} and for which the variance lower bound condition \eqref{eq:VarLowerBound} is satisfied. We decided to restrict to the key geometric functionals $\xi\in\Xi$ as introduced in Section \ref{sec:KeyFunctionals}, because for these examples the above assumptions can be verified and since they cover the most interesting and most fundamental quantities associated with the random polytopes $\Pi_\l$.
\end{remark}

\subsection{Proof of the theorems}

We are now prepared to establish our main results presented in Section \ref{sec:MainResults}. For this, we need the following lemma, which is included to make the paper self-contained. By slight abuse of notation, we denote by $c^k(X)$ the $k$th cumulant of a (real-valued) random variable $X$. It is well defined if $\EE|X|^k<\infty$ and is given by
$$
c^k(X) := (-\mathfrak{i})^{k}\,{\dint^k\over\dint t^k}\log\EE e^{\mathfrak{i}tX}\Big|_{t=0}\,,
$$
where $\mathfrak{i}$ stands for the imaginary unit.

\begin{lemma}\label{lem:Litauer}
Let $(X_\l)_{\l>0}$ be a family of random variables with $\EE X_\l=0$ and $\var X_\l=1$ for all $\l>0$, and suppose that, for all $\l\geq\l_0$,
$$
|c^k(X_\l)| \leq (k!)^{1+\gamma}\,\Delta_\l^{-(k-2)}
$$
for some $\l_0>0$, $\gamma\in[0,\infty)$, $\Delta_\l\in(0,\infty)$ and all $k\in\{3,4,\ldots\}$.
\begin{itemize}
\item[(i)] There exists $b_4\in(0,\infty)$ only depending on $\gamma$ such that
$$
\PP(|X_\l|\geq y) \leq 2\exp\Big(-{1\over 4}\min\Big\{{y^2\over 2^{1+\gamma}},(\Delta_\l y)^{1\over 1+\gamma}\Big\}\Big)
$$
for all $y\geq 0$ and $\l\geq b_4$.

\item[(ii)] There are constants $b_5,b_6,b_7\in(0,\infty)$ only depending on $\gamma$ such that for $\Delta_\l\geq b_5$ and $0\leq y\leq b_6\Delta_\l^{1/(1+2\gamma)}$,
\begin{align*}
\Bigg|\log{\PP(X_\l\geq y)\over 1-\Phi(y)}\Bigg| &\leq b_7\,(1+y^3)\,\Delta_\l^{-1/(1+2\gamma)}\quad\text{and}\\
\Bigg|\log{\PP(X_\l\leq -y)\over \Phi(-y)}\Bigg| &\leq b_7\,(1+y^3)\, \Delta_\l^{-1/(1+2\gamma)}\,,
\end{align*}
where $\Phi(\,\cdot\,)$ is the distribution function of a standard Gaussian random variable.

\item[(iii)] If $(a_\l)_{\l>0}$ is such that
$$
\lim_{\l\to\infty} a_\l = \infty \qquad\text{and}\qquad \lim_{\l\to\infty}a_\l\,\Delta_\l^{-1/(1+2\gamma)}\,,
$$
then $(a_\l^{-1}X_\l)_{\l>0}$ satisfies a moderate deviation principle on $\RR$ with speed $a_\l^2$ and rate function $I(y)={y^2\over 2}$.
\end{itemize}
\end{lemma}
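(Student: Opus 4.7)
All three parts of the lemma are classical consequences of the Saulis--Statulevi\v{c}ius cumulant method: (i) and (ii) are essentially Lemma~2.3 and Theorem~3.2 of \cite{SaulisBuch}, while (iii) was extracted in \cite{DoeringEichelsbacher}. Since the lemma is stated for self-containedness, the plan is to sketch how each part follows from the cumulant hypothesis rather than re-prove the Lithuanian machinery in detail.

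For part~(i), the starting point is the exponential Markov bound $\PP(X_\l\geq y)\leq\exp(-ty+\Lambda_\l(t))$ with $\Lambda_\l(t):=\log\EE e^{tX_\l}$. Expanding in cumulants and using the normalizations $c^1(X_\l)=0$, $c^2(X_\l)=1$ one gets
\[
\bigl|\Lambda_\l(t)-t^2/2\bigr| \leq \Delta_\l^{-2}\sum_{k\geq 3}(k!)^{\gamma}\,(|t|\Delta_\l)^{k}.
\]
For $\gamma=0$ this is a geometric tail controlled on $|t|\leq(2\Delta_\l)^{-1}$. For $\gamma>0$ the series formally diverges, so I would truncate at $k^\star\asymp(|t|\Delta_\l)^{-1/\gamma}$ and absorb the discarded mass through a Bernstein-type truncation of $X_\l$ itself, exactly as in Lemma~2.4 of \cite{SaulisBuch}. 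Optimizing the resulting bound over $t\in[0,(2\Delta_\l)^{-1}]$ gives a Gaussian regime $\exp(-y^2/2^{2+\gamma})$ when $y$ is small relative to a power of $\Delta_\l$ and a stretched-exponential regime $\exp(-(\Delta_\l y)^{1/(1+\gamma)}/4)$ otherwise, which merge into the claimed minimum; the two-sided estimate follows by the same argument applied to $-X_\l$.

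For part~(ii), I would use the Cram\'er conjugate. The bound on cumulants ensures that the saddle-point equation $\Lambda_\l'(\hat t)=y$ has a unique analytic solution $\hat t_\l(y)$ in a disc of radius $c\,\Delta_\l^{1/(1+2\gamma)}$, with $\hat t_\l(y)=y+O(y^2\Delta_\l^{-1}+\cdots)$. The Cram\'er--Petrov formula (Theorem~3.1 of \cite{SaulisBuch}) then gives
\[
\PP(X_\l\geq y) = (1-\Phi(y))\,\exp\bigl(L_\l(y)\bigr)
\]
with a remainder $L_\l(y)$ expressible through the Cram\'er series and bounded by $b_7(1+y^3)\Delta_\l^{-1/(1+2\gamma)}$ throughout the range $0\leq y\leq b_6\Delta_\l^{1/(1+2\gamma)}$. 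Taking logarithms yields the first inequality; the second follows by symmetry. The hard part, and the main obstacle in both (i) and (ii), is the delicate truncation of cumulant-style series when $\gamma>0$: the factorial factor $(k!)^{\gamma}$ prevents a naive entire-function bound on $\Lambda_\l$, and the correct truncation index must be chosen as a function of both $y$ and $\Delta_\l$.

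For part~(iii), the MDP follows from (ii) by a soft argument. Given the half-line $B=[y,\infty)$ with $y>0$, $\PP(a_\l^{-1}X_\l\geq y)=\PP(X_\l\geq a_\l y)$; the growth condition $a_\l=o(\Delta_\l^{1/(1+2\gamma)})$ places $a_\l y$ inside the range of validity of~(ii) for $\l$ large, so
\[
\log\PP(X_\l\geq a_\l y)=\log(1-\Phi(a_\l y))+O\bigl((a_\l y)^3\Delta_\l^{-1/(1+2\gamma)}\bigr).
\]
Mill's ratio gives $\log(1-\Phi(z))=-z^2/2-\log(z\sqrt{2\pi})+o(1)$, so dividing by $a_\l^2$ and letting $\l\to\infty$ yields the limit $-y^2/2$; the error term is $o(a_\l^2)$ since $a_\l^3\Delta_\l^{-1/(1+2\gamma)}=a_\l^2\cdot(a_\l\Delta_\l^{-1/(1+2\gamma)})$. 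The cases $y\leq 0$ and the left half-line $(-\infty,-y]$ are handled by symmetry, and the extension from half-lines to arbitrary measurable $B\subset\RR$ is the standard LDP reduction, using that $I(y)=y^2/2$ is trivially lower semi-continuous with compact level sets. Once (i) and (ii) are established, (iii) requires essentially no further cumulant input.
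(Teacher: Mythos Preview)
Your proposal is correct and aligns with the paper's approach: the paper's proof consists entirely of citations---part~(i) to Lemma~2.4 in \cite{SaulisBuch} (via Lemma~3.9 in \cite{EichelsbacherSchreiberRaic}), part~(ii) to Lemma~2.3 in \cite{SaulisBuch} (via Corollary~3.2 in \cite{EichelsbacherSchreiberRaic}), and part~(iii) to Theorem~1.1 in \cite{DoeringEichelsbacher}. Your sketch of the underlying mechanics (exponential Markov plus truncated cumulant expansion for~(i), Cram\'er--Petrov saddle-point for~(ii), and the Mill's-ratio reduction of~(iii) to~(ii)) goes beyond what the paper provides but is exactly the content of those cited results, so there is no discrepancy in substance.
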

\begin{proof}
Part (i) is a reformulation of Lemma 2.4 in \cite{SaulisBuch} in a form taken from Lemma 3.9 in \cite{EichelsbacherSchreiberRaic} with $H=2^{1+\gamma}$ there. The statement in (ii) corresponds to Lemma 2.3 in \cite{SaulisBuch} in a form that we took from Corollary 3.2 in \cite{EichelsbacherSchreiberRaic}. Finally, the MDP for the family $(X_\l)_{\l>0}$ is Theorem 1.1 in \cite{DoeringEichelsbacher}.
\end{proof}

We now combine the previous lemma with the cumulant bound established in Proposition \ref{prop:CumulantEstimate} to give a proof of our main results for Poisson polytopes.

\begin{proof}[Proof of Theorem \ref{thm:LDIGeneral}, Theorem \ref{thm:MDProbasGeneral} and Theorem \ref{thm:MDPScalarGeneral}.]
We let $\xi\in\Xi$ be a key geometric functional and $f\in\cC(\BBd)$ with $\lan f^2,\cH_{\SSd}^{d-1}\ran\neq 0$. Recalling from \eqref{eq:VarLowerBound} that in this case $\sigma_\l^\xi[f]\geq c_{89}\lan f^2,\cH_{\SSd}^{d-1}\ran^{1\over 2}\l^{d-1\over 2(d+1)}$, we see in view of Proposition \ref{prop:CumulantEstimate} that, for $k\in\{3,4,\ldots\}$,
\begin{align}\label{eq:FinalCumEst}
\nonumber {|\lan f^{\s k},c_\l^k\ran|\over(\sigma_\l^\xi[f])^k} &\leq c_{2}\,c_3^k\,\|f\|_\infty^k\,\l^{d-1\over d+1}\,(k!)^{d+w[\xi]+4}\,\Big(c_{90}\,\lan f^2,\cH_{\SSd}^{d-1}\ran^{1\over 2}\,\l^{d-1\over 2(d+1)}\Big)^{-k}\\
\nonumber &\leq (k!)^{d+w[\xi]+4}\,c_2\,\l^{d-1\over d+1}\Bigg({c_3\,\|f\|_\infty\over c_{90}\,\lan f^2,\cH_{\SSd}^{d-1}\ran^{1\over 2 }\,\l^{d-1\over 2(d+1)}}\Bigg)^{k} \\
&= (k!)^{d+w[\xi]+4}\,\Big(c_{93}\,\l^{d-1\over 2(d+1)}\Big)^{-(k-2)}\,,
\end{align}
where $c_{93}:=c_{91}(c_3\|f\|_\infty\max\{1,c_{92}\})^{-1}$ with $c_{91}:=c_{90}\lan f^2,\cH_{\SSd}^{d-1}\ran^{1\over 2 }$ and $c_{92}:=c_2c_3^2c_{91}^{-2}\|f\|_\infty^2$ is a constant depending only on $d$, $\xi$ and $f$.
Now, put
\begin{equation}\label{eq:GammaDeltaCon}
\gamma := d+w[\xi]+3\qquad\text{and}\qquad \Delta_\l:=c_{93}\,\l^{d-1\over 2(d+1)}
\end{equation}
and apply Lemma \ref{lem:Litauer} to the random variables $X_\l:=(\sigma_\l^\xi[f])^{-1}\,\lan f,\l^{e[\xi]}\bar\mu_\l^\xi\ran$. The results then follow and the proof is complete.
\end{proof}

\begin{proof}[Proof of Theorem \ref{thm:MDPMeasureGeneral}]
Theorem \ref{thm:MDPMeasureGeneral} follows by the same argument as Theorem 1.5 in \cite{EichelsbacherSchreiberRaic} follows from Theorem 1.4 ibidem with the class $\cB(\BBd)$ replaced by $\cC(\SSd)$. For this reason, details are omitted.
\end{proof}

\appendix
\section{Appendix}

Recall that for a point $x\in\BBd\setminus\{0\}$, $L(x)$ is the line through $x$ and $0$. Moreover, $G(L(x),j)$, $j\in\{1,\ldots,d-1\}$, is the space of $j$-dimensional linear subspaces of $\RR^d$ containing $L(x)$, which is supplied with the Haar probability measure $\nu_j^{L(x)}$. Further recall the definition of $\theta_j(\,\cdot\,)$ from \eqref{eq:defThetak}. 

\begin{lemma}\label{lem:RepVK}
Let $K\subset\BBd$ be a convex body. Then, for $j\in\{1,\ldots,d-1\}$, one has that $$V_j(\BBd)-V_j(K)={{d-1\choose j-1}\over\k_{d-j}}\int_{\BBd\setminus K}\dint x\, \theta_j(x)\,\|x\|^{-(d-j)}\,.$$
\end{lemma}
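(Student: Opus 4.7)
The plan is to start from the Kubota-type integral representation of the intrinsic volumes recalled in Section~\ref{sec:Basics} and then apply a standard disintegration formula to transform the integration over the Grassmannian $G(d,j)$ together with the subspaces $L$ into an integration over $\BBd$ together with the subspaces $M \in G(L(x),j)$ passing through the point $x$. First I would write
$$
V_j(\BBd)-V_j(K) = \binom{d}{j}\frac{\kappa_d}{\kappa_j\kappa_{d-j}}\int_{G(d,j)}\nu_j(\dint L)\,\bigl(V_j(\BBd|L)-V_j(K|L)\bigr).
$$
Since $\BBd|L=\BBd\cap L$ is the $j$-dimensional unit ball in $L$ and $K|L\subset\BBd\cap L$, the difference inside the integrand equals $V_j((\BBd\cap L)\setminus(K|L))=\int_{\BBd\cap L}\mathbf{1}(y\notin K|L)\dint y$, with $\dint y$ denoting the $j$-dimensional Lebesgue measure on $L$. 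Introducing spherical coordinates on $L$, i.e.\ writing $y=ru$ with $u\in \SSd\cap L$ and $r\in[0,1]$, this becomes
$$
V_j(\BBd)-V_j(K) = \binom{d}{j}\frac{\kappa_d}{\kappa_j\kappa_{d-j}}\int_{G(d,j)}\nu_j(\dint L)\int_{\SSd\cap L}\cH^{j-1}_L(\dint u)\int_0^1 r^{j-1}\,\mathbf{1}(ru\notin K|L)\,\dint r,
$$
where $\cH^{j-1}_L$ is the $(j-1)$-dimensional Hausdorff measure on the unit sphere of $L$.

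Next I would invoke the disintegration formula for the flag space of pairs $(u,L)$ with $u\in\SSd\cap L$, which asserts that for any non-negative measurable $F$,
$$
\int_{G(d,j)}\nu_j(\dint L)\int_{\SSd\cap L}\cH^{j-1}_L(\dint u)\,F(u,L) = \frac{j\kappa_j}{d\kappa_d}\int_{\SSd}\cH_{\SSd}^{d-1}(\dint u)\int_{G(L(u),j)}\nu_j^{L(u)}(\dint M)\,F(u,M).
$$
(Both sides equal $j\kappa_j$ when $F\equiv 1$, which fixes the prefactor; the identity itself is a routine consequence of the uniqueness of the Haar measures involved.) Applied with $F(u,L)=\int_0^1 r^{j-1}\mathbf{1}(ru\notin K|L)\dint r$, this yields
$$
V_j(\BBd)-V_j(K) = \binom{d}{j}\frac{j}{d\kappa_{d-j}}\int_{\SSd}\cH_{\SSd}^{d-1}(\dint u)\int_0^1 r^{j-1}\dint r\int_{G(L(u),j)}\nu_j^{L(u)}(\dint M)\,\mathbf{1}(ru\notin K|M),
$$
and the elementary identity $\binom{d}{j}\frac{j}{d}=\binom{d-1}{j-1}$ produces the correct prefactor $\binom{d-1}{j-1}/\kappa_{d-j}$.

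Finally, I would observe that $L(ru)=L(u)$ and therefore the innermost double integral equals $\theta_j(ru)$ in the notation of \eqref{eq:defThetak} (read with $[\cX]$ replaced by $K$). Reverting the spherical coordinates via $\dint x=\|x\|^{d-1}\dint r\,\cH_{\SSd}^{d-1}(\dint u)$ converts $r^{j-1}\dint r\,\cH_{\SSd}^{d-1}(\dint u)$ into $\|x\|^{-(d-j)}\dint x$ over $\BBd$. Finally, for $x\in K$ every $M\in G(L(x),j)$ contains $x$ and thus $x=x|M\in K|M$, so $\theta_j(x)=0$ on $K$ and the integral over $\BBd$ reduces to an integral over $\BBd\setminus K$, giving the claimed formula. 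The only non-routine step is the disintegration identity and the verification of its normalising constant; everything else is a change of variables.
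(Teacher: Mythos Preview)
Your proposal is correct and follows essentially the same route as the paper: start from the Kubota formula, pass to an integral over $\BBd$ via a flag-type change of variables, and identify the inner Grassmannian integral as $\theta_j$. The only cosmetic difference is that the paper phrases your spherical-coordinate steps as two applications of the Blaschke--Petkantschin formula and your disintegration identity as the flag formula \cite[Theorem~7.1.2]{SW}, whereas you carry out the same transformations by hand; the argument and the constants are the same.
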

\begin{proof}
The mean projection formula from integral geometry \cite[Theorem 6.2.2]{SW} asserts that
\begin{align*}
V_j(\BBd)-V_j(K) &= {d\choose j}{\k_d\over\k_j\k_{d-j}}\int_{G(d,j)}\nu_j(\dint L)\,\big(V_j(\BBd|L)-V_j(K|L)\big)\\
&={d\choose j}{\k_d\over\k_j\k_{d-j}}\int_{G(d,j)}\nu_j(\dint L)\int_{\BB_L}\dint x\,\big(1-{\bf 1}(x\notin K|L)\big)\,,
\end{align*}
where $\BB_L=\BBd|L$ is the unit ball in $L$. To the inner integral over $\BB_L$ we apply the Blaschke-Petkanschin formula \cite[Theorem 7.2.1]{SW}, which gives
\begin{align*}
&{d\choose j}{\k_d\over\k_j\k_{d-j}}\int_{G(d,j)}\nu_j(\dint L)\int_{\BB_L}\dint x\,\big(1-{\bf 1}(x\notin K|L)\big)\\
&={j\k_j\over 2}{d\choose j}{\k_d\over\k_j\k_{d-j}}\int_{G(d,j)}\nu_j(\dint L)\int_{G(L,1)}\nu_1^L(\dint M)\int_{\BB_M}\dint x\,\big(1-{\bf 1}(x\notin K|L)\big)\,\|x\|^{j-1}\,.
\end{align*}
Applying \cite[Theorem 7.1.2]{SW} and Fubini's theorem, the last expression is transformed into 
\begin{align*}
&{j\k_j\over 2}{d\choose j}{\k_d\over\k_j\k_{d-j}}\int_{G(d,1)}\nu_1(\dint M)\int_{G(M,j)}\nu_j^M(\dint L)\int_{\BB_M}\dint x\,\big(1-{\bf 1}(x\notin K|L)\big)\,\|x\|^{j-1}\\
&={j\k_j\over 2}{d\choose j}{\k_d\over\k_j\k_{d-j}}\int_{G(d,1)}\nu_1(\dint M)\int_{\BB_M}\dint x\int_{G(M,j)}\nu_j^M(\dint L)\,\big(1-{\bf 1}(x\notin K|L)\big)\,\|x\|^{j-1}\\
&={j\k_j\over 2}{d\choose j}{\k_d\over\k_j\k_{d-j}}\int_{G(d,1)}\nu_1(\dint M)\int_{\BB_M}\dint x\int_{G(L(x),j)}\nu_j^{L(x)}(\dint L)\,\big(1-{\bf 1}(x\notin K|L)\big)\,\|x\|^{j-1}\,,
\end{align*}
where we have also used that $M=L(x)$. To this expression we apply the Blaschke-Petkanschin formula \cite[Theorem 7.2.1]{SW} now backwards to see that
\begin{align*}
&{j\k_j\over 2}{d\choose j}{\k_d\over\k_j\k_{d-j}}\int_{G(d,1)}\nu_1(\dint M)\int_{\BB_M}\dint x\int_{G(L(x),j)}\nu_j^{L(x)}(\dint L)\,\big(1-{\bf 1}(x\notin K|L)\big)\,\|x\|^{j-1}\\
&={j\k_j\over d\k_d}{d\choose j}{\k_d\over\k_j\k_{d-j}}\int_{\BBd}\dint x\int_{G(L(x),j)}\nu_j^{L(x)}(\dint L)\,\big(1-{\bf 1}(x\notin K|L)\big)\,{\|x\|^{j-1}\over \|x\|^{d-1}}\\
&={{d-1\choose j-1}\over\k_{d-j}}\int_{\BBd\setminus K}\dint x\int_{G(L(x),j)}\nu_j^{L(x)}(\dint L)\,\big(1-{\bf 1}(x\notin K|L)\big)\,\|x\|^{-(d-j)}\,.
\end{align*}
Taking into account the definition of $\theta_j(x)$ completes the proof.
\end{proof}

\subsection*{Acknowledgements}
We would like to thank Matthias Reitzner (University of Osnabr\"uck) for helpful discussions and for useful hints and comments to an earlier version of this paper.


\end{document}